\documentclass[11pt,twoside]{article} %Sep2.2018, zhou
\usepackage{bbm}
\usepackage{amsmath,amssymb}
\usepackage{hyperref}
\usepackage{hyperref}
\hypersetup{
    colorlinks,
    citecolor=black,
    filecolor=black,
    linkcolor=black,
    urlcolor=black}

\makeatletter
\newcommand*{\mint}[1]{%
  % #1: overlay symbol
  \mint@l{#1}{}%
}
\newcommand*{\mint@l}[2]{%
  % #1: overlay symbol
  % #2: limits
  \@ifnextchar\limits{%
    \mint@l{#1}%
  }{%
    \@ifnextchar\nolimits{%
      \mint@l{#1}%
    }{%
      \@ifnextchar\displaylimits{%
        \mint@l{#1}%
      }{%
        \mint@s{#2}{#1}%
      }%
    }%
  }%
}
\newcommand*{\mint@s}[2]{%
  % #1: limits
  % #2: overlay symbol
  \@ifnextchar_{%
    \mint@sub{#1}{#2}%
  }{%
    \@ifnextchar^{%
      \mint@sup{#1}{#2}%
    }{%
      \mint@{#1}{#2}{}{}%
    }%
  }%
}
\def\mint@sub#1#2_#3{%
  \@ifnextchar^{%
    \mint@sub@sup{#1}{#2}{#3}%
  }{%
    \mint@{#1}{#2}{#3}{}%
  }%
}
\def\mint@sup#1#2^#3{%
  \@ifnextchar_{%
    \mint@sup@sub{#1}{#2}{#3}%
  }{%
    \mint@{#1}{#2}{}{#3}%
  }%
}
\def\mint@sub@sup#1#2#3^#4{%
  \mint@{#1}{#2}{#3}{#4}%
}
\def\mint@sup@sub#1#2#3_#4{%
  \mint@{#1}{#2}{#4}{#3}%
}
\newcommand*{\mint@}[4]{%
  % #1: \limits, \nolimits, \displaylimits
  % #2: overlay symbol: -, =, ...
  % #3: subscript
  % #4: superscript
  \mathop{}%
  \mkern-\thinmuskip
  \mathchoice{%
    \mint@@{#1}{#2}{#3}{#4}%
        \displaystyle\textstyle\scriptstyle
  }{%
    \mint@@{#1}{#2}{#3}{#4}%
        \textstyle\scriptstyle\scriptstyle
  }{%
    \mint@@{#1}{#2}{#3}{#4}%
        \scriptstyle\scriptscriptstyle\scriptscriptstyle
  }{%
    \mint@@{#1}{#2}{#3}{#4}%
        \scriptscriptstyle\scriptscriptstyle\scriptscriptstyle
  }%
  \mkern-\thinmuskip
  \int#1%
  \ifx\\#3\\\else_{#3}\fi
  \ifx\\#4\\\else^{#4}\fi
}
\newcommand*{\mint@@}[7]{%
  % #1: limits
  % #2: overlay symbol
  % #3: subscript
  % #4: superscript
  % #5: math style
  % #6: math style for overlay symbol
  % #7: math style for subscript/superscript
  \begingroup
    \sbox0{$#5\int\m@th$}%
    \sbox2{$#5\int_{}\m@th$}%
    \dimen2=\wd0 %
    % => \dimen2 = width of \int
    \let\mint@limits=#1\relax
    \ifx\mint@limits\relax
      \sbox4{$#5\int_{\kern1sp}^{\kern1sp}\m@th$}%
      \ifdim\wd4>\wd2 %
        \let\mint@limits=\nolimits
      \else
        \let\mint@limits=\limits
      \fi
    \fi
    \ifx\mint@limits\displaylimits
      \ifx#5\displaystyle
        \let\mint@limits=\limits
      \fi
    \fi
    \ifx\mint@limits\limits
      \sbox0{$#7#3\m@th$}%
      \sbox2{$#7#4\m@th$}%
      \ifdim\wd0>\dimen2 %
        \dimen2=\wd0 %
      \fi
      \ifdim\wd2>\dimen2 %
        \dimen2=\wd2 %
      \fi
    \fi
    \rlap{%
      $#5%
        \vcenter{%
          \hbox to\dimen2{%
            \hss
            $#6{#2}\m@th$%
            \hss
          }%
        }%
      $%
    }%
  \endgroup
}
\usepackage{mathrsfs}
\usepackage{amssymb}
\usepackage{amsmath}
\usepackage{amsthm}
\usepackage{amsfonts}
\usepackage{color}
\usepackage{graphicx}
\usepackage[active]{srcltx}
\usepackage{tikz}
\usepackage{pgflibraryarrows}
\usepackage{pgflibrarysnakes}

\usepackage[cp1252]{inputenc}

\usepackage{mathrsfs}
\usepackage{graphicx}

\usepackage[active]{srcltx}

\allowdisplaybreaks

\usepackage{titletoc}
\titlecontents{section}[0pt]{\addvspace{2pt}\filright}
              {\contentspush{\thecontentslabel\ }}
              {}{\titlerule*[8pt]{.}\contentspage}

\textwidth=16.8cm
\textheight=22cm
\oddsidemargin 0.45cm
\evensidemargin 0.45cm
\oddsidemargin -1.0mm
\evensidemargin -1.0mm
\topmargin -12mm
\headsep 8mm
\footskip 13mm
\baselineskip 4.5mm

\parindent=13pt

\def\rr{{\mathbb R}}
\def\rn{{{\rr}^n}}

\def\fz{\infty}
\def\az{\alpha}

\def\dist{{\mathop\mathrm{\,dist\,}}}
\def\loc{{\mathop\mathrm{\,loc\,}}}

\def\lip{{\mathop\mathrm{\,Lip}}}

\def\lz{\lambda}
\def\dz{\delta}
\def\bdz{\Delta}
\def\ez{\epsilon}

\def\gz{{\gamma}}

\def\tz{\theta}

\def\osc{{\textrm{osc}}}

\def\bint{{\ifinner\rlap{\bf\kern.35em--}
\int\else\rlap{\bf\kern.45em--}\int\fi}\ignorespaces}

\def\bbint{{\ifinner\rlap{\bf\kern.35em--}
\hspace{0.078cm}\int\else\rlap{\bf\kern.45em--}\int\fi}\ignorespaces}

\def\osc{ \mathop \mathrm{\, osc\,} }

\def\diam{{\mathop\mathrm{\,diam\,}}}

\newtheorem{thm}{Theorem}[section]
\newtheorem{lem}[thm]{Lemma}%[section]     %@@!!@@!!
%[section]    %@@!!@@!!
\newtheorem{rem}[thm]{Remark}%[section]    %@@!!@@!!
\newtheorem{cor}[thm]{Corollary}%[section]    %@@!!@@!!
\newtheorem{defn}[thm]{Definition}%[section]    %@@!!@@!!
%[section]    %@@!!@@!!
\numberwithin{equation}{section}

\pagestyle{myheadings}\markboth{Regularity from $p$-harmonic potentials to $\fz$-harmonic potentials in convex rings}{F. Peng, Y. Zhang and Y. Zhou
}

\parindent=13pt

\title
{\Large\bf    Regularity from $p$-harmonic potentials to $\fz$-harmonic potentials\\ in convex rings
\footnotetext{\hspace{-0.35cm}
\noindent{2020 {\it Mathematics Subject Classification:}} 35J60, 35J65, 35J70, 49N60
\endgraf
\noindent{\it Key words and phases:}  $p$-harmonic potential,  $\fz$-harmonic potential,
convex rings.
\endgraf
}}

\author{Fa Peng, Yi Ru-Ya Zhang  and Yuan Zhou}
 %\date{\today}
\begin{document}

%\begin{CJK*}{GBK}{song}

\arraycolsep=1pt
\allowdisplaybreaks
 \maketitle

\begin{center}
\begin{minipage}{13.5cm}\small
 \noindent{\bf Abstract.}\quad
The exploration of shape metamorphism, surface reconstruction, and image interpolation raises fundamental inquiries concerning the $C^1$ and higher-order regularity of $\infty$-harmonic potentials --- a specialized category of $\infty$-harmonic functions. Additionally, it prompts questions regarding their corresponding approximations using $p$-harmonic potentials. It is worth noting that establishing $C^1$ and higher-order regularity for $\infty$-harmonic functions remains a central concern within the realm of $\infty$-Laplace equations and $L^\infty$-variational problems.

\quad\quad In this study, we investigate the regularity properties from $p$-harmonic potentials $ u_p$ to $\infty$-harmonic potentials $u$ within arbitrary convex rings $\Omega=\Omega_0\backslash
\overline \Omega_1$ in $\mathbb R^n$. Here $\Omega_0$ is a bounded convex domain in $\mathbb R^n$
and $\overline\Omega_1\subset \Omega_0$ is a compact convex set. Our main results can be summarized as follows:
\begin{itemize}

\item We establish interior $C^1$ regularity for $\infty$-harmonic potentials, providing {  their $C^1$-approximation  by}  $p$-harmonic potentials; it answers an open problem by Lindgren and Lindqvist \cite{ll19,ll21}. We also prove the existence of streamlines.

\item For any real value of $\alpha$, we show that $|Du|^\alpha$ belongs to  $W^{1,2}_\loc(\Omega)$. Furthermore, we prove weak convergence of $D|Du_p|^\alpha$ to $D|Du|^\alpha$ in $L^2_{\loc}(\Omega)$ as $p\to\infty$, along with  $D|Du| ^\alpha \cdot \frac{Du}{|Du|}=0$ almost everywhere.

\item In the degenerate case where $\overline\Omega_1$ reduces to a single point, we establish the following equivalence

$$ \mbox{$\Omega_0=B(x_0,r)$ for some $r>0$ $\Leftrightarrow$ $u\in C^2(\Omega)$ $\Leftrightarrow$   $u$ is concave. }$$

\item We demonstrate that the distributional second-order derivatives $\mathcal D^2u$ are Radon measures with suitable upper bounds. Convergence of $D^2 u_p$ to $\mathcal D^2u$ weakly in a measure-theoretic sense is also established.

\end{itemize}

These results extend some known  findings \cite{kzz,ll19,ll21,s05,swy08} in two dimensions. Moreover, in planar convex rings,  we prove that 
$\infty$-harmonic potentials are twice differentiable almost everywhere, providing optimal results in this context. The second-order derivatives contribute to the absolutely continuous part of $\mathcal D^2u$, enabling

$$u(x)=\frac{1}{2}\left(\max_{\overline {B(x,\epsilon)}}u+\min_{
\overline{B(x,\epsilon)}}u\right)+o(\epsilon^2)\quad\mbox{ for almost all $x\in \Omega$ as $\epsilon\to0$}.$$

\end{minipage}
\end{center}

\tableofcontents

\section{Introduction}\label{Section1}

The $\fz$-Laplace  equation, denoted as
  $$\Delta_\fz v:=D^2vDv\cdot Dv=0\quad\mbox{in a domain $U\subset\rn$ with $n\ge2$,}$$
  is a nonlinear, highly degenerate second-order elliptic equation, particularly, one that is not in divergence form.
 In this context, we will be working with its viscosity solutions as defined by Crandall-Ishii-Lions in their work \cite{cil92}.
 These solutions are commonly referred to as $\fz$-harmonic functions in the domain $U$, and you can find the specific definitions in Section \ref{Section2} of this text.

The concept of $\fz$-harmonic functions originated with Aronsson in the 1960s \cite{ar65,ar66,ar67},
 as he was investigating the Euler-Lagrange equation for the absolute minimization of the
 $L^\fz$-functional  defined as
    $F(v,U)=\sup_U|Dv|^2$.
A function $v\in W^{1,\fz}_\loc(U) $ is considered as an absolute minimizer if the following condition holds:
 $$F(v,V)\le F(w,V)\ \mbox{whenever $V\Subset U$  and $w\in C^0(\overline V) \cap W^{1,\fz}(V)$ with $w|_{\partial V}=v|_{\partial V}$}. $$
The existence of absolute minimizers is discussed in Aronsson's work \cite{ar84}.
Jensen's research \cite{j93} provided a crucial link between $\fz$-harmonic functions and absolute minimizers. He also established their uniqueness. For alternative approaches to prove uniqueness, you can refer to the works of
 Barles-Busca \cite{bb01}, Crandall et al \cite{cgw}, Peres et al \cite{pssw09}
 and Armstrong-Smart \cite{as10}.

The $\fz$-Laplace equation is not only a crucial topic in mathematics but also finds applications in a wide range of fields, including shape metamorphism, surface reconstruction, image processing, computer vision, tug-of-war games, Lipschitz learning, and more. For specific references, you can consult works such as \cite{cepb04,cis06,cms98,cp99,ct96,ob05,pssw09}.

In shape metamorphism and surface reconstruction, the task of finding suitable reconstruction functions is of utmost importance and has diverse applications across scientific disciplines. According to the ``equal importance criteria" introduced by Cong-Parvin \cite{cp98,cp99}, these reconstruction functions are solutions to the Dirichlet problem:
\begin{align}\label{IPTu}
 \mbox{  $\bdz_\fz u=0$ \mbox{in $\Omega$};
 $u=0$ on $ \partial\Omega_0$ and $ u =1$ on $ \partial\overline\Omega_1 $.}
 \end{align}
Here, $\Omega = \Omega_0\setminus \overline\Omega_1$, with $\Omega_0$ being a bounded domain in $\mathbb{R}^n$, and $\Omega_1$ being a subdomain with its closure $\overline\Omega_1$ residing within $\Omega_0$. In some cases, $\overline\Omega_1$ may reduce to a compact connected subset of $\Omega_0$ without an interior. Such a set $\Omega$ is commonly referred to as a ring domain.
Additionally, in the context of the ``interpolation algorithm via propagation" devised by Casas-Torres \cite{ct96} for image processing, interpolation functions are also required to solve the Dirichlet problem \eqref{IPTu}. For more precise details, please refer to Section \ref{Section1.3} of the relevant literature.

Jensen's work \cite{j93} provides a significant insight into problem \eqref{IPTu}. According to Jensen, this problem has a unique viscosity solution, commonly referred to as the $\fz$-harmonic potential in $\Omega$. Throughout this paper, we will denote this unique solution as $u$.
Notably, $u$ serves as an $\fz$-harmonic function within the domain $\Omega$ and adheres to the Dirichlet boundary values $u=0$ on $\partial\overline\Omega_0$ and $u=1$ on $\partial\overline\Omega_1$. Since the Dirichlet boundary values are Lipschitz continuous, it follows that $u$ is in $C^{0,1}(\overline\Omega)$.

In the context of meeting the regularity requirements for reconstruction functions in shape metamorphism and surface reconstruction, as well as the regularity needed for interpolation functions in the interpolation algorithm via propagation, several questions arise regarding the regularity of $\fz$-harmonic potentials. These questions, detailed in Section \ref{Section1.3} for further motivation, can be summarized as follows:

\begin{itemize}
  \item[]{\hspace{-0.6cm}\bf Question 1.1.} Can the $\fz$-harmonic potential $u$ in $\Omega$ be shown to have $C^1$ regularity, meaning that $u\in C^1(\Omega)$?

  \item[]{\hspace{-0.6cm}\bf Question 1.2.}
Does the length of the (partial) derivative of $u$, denoted as $|Du|$, exhibit some Sobolev regularity? Specifically, does $|Du|$ have zero partial derivative along the direction of $Du$ within $\Omega$, as suggested by the equation?

  \item[]{\hspace{-0.6cm}\bf Question 1.3.} Is it valid to assert that $u$ is twice differentiable almost everywhere? Furthermore, what regularity can be attributed to the distributional second-order derivatives of $u$?
\end{itemize}

These questions delve into the regularity properties of $\fz$-harmonic potentials and their derivatives, which are crucial in various applications. Further insights into their motivations can be found in Section \ref{Section1.3}.

The study of the regularity of $\fz$-harmonic potentials, especially considering Questions 1.1-1.3, has inherent significance. It not only addresses important issues but also provides insights into the potential $C^1$ and higher-order regularity of $\fz$-harmonic functions, which is a central topic in this field. In particular,
one of the long-standing conjectures is as follows:
\begin{equation}\label{c1IF}
\text{Conjecture: $\fz$-harmonic functions are continuously differentiable, i.e., they are $C^1$.}
\end{equation}
Notably, thanks to the $\fz$-harmonic function $x^{4/3}-y^{4/3}$ discovered by Aronsson \cite{ar84}, it is natural to inquire whether $\fz$-harmonic functions exhibit $C^{1,1/3}$ regularity and Sobolev $W^{2,\gz}$ regularity with $\gz<3/2$. Several significant contributions in the literature have addressed these questions, including works by Crandall-Evans \cite{ce01}, Savin \cite{s05}, Evans-Savin \cite{es08}, Evans-Smart \cite{es11a,es11b}, Koch wtih two of the authors \cite{kzz}, and Dong and the authors \cite{dpzz}, among others.

Specifically, in two dimensions, Savin \cite{s05} proved that $\fz$-harmonic functions are $C^1$, thus confirming the conjecture; see also \cite{zz20} for a simpification  of the original proof via capacity. Evans-Savin \cite{es08} demonstrated that $\fz$-harmonic functions have $C^{1,\gz}$ regularity for some $0<\gz<1/3$. Furthermore, in \cite{kzz} and \cite{dpzz}, it was shown that, for any $\alpha>0$, $|Dv|^\alpha$ belongs to $W^{1,2}_\text{loc}$, $D |Dv|^\alpha\cdot Dv=0$ almost everywhere, and additionally, $-\det D[|Dv|^\alpha Dv]$ is a nonnegative Radon measure.

In dimensions equal to or greater than three, Evans-Smart \cite{es11a,es11b} established that $\fz$-harmonic functions are everywhere differentiable. These contributions collectively advance our understanding of the regularity properties of $\fz$-harmonic functions and their derivatives.

The established regularity results for $\fz$-harmonic functions naturally extend to $\fz$-harmonic potentials. Additionally, in the case of two-dimensional space, when $\overline \Omega_1$ reduces to a single point within $\Omega_0$, further results have been obtained:
By Savin-Wang-Yu \cite[Corollary 1.2]{swy08}, it has been shown that $u$ belongs to $C^2(\Omega)$ if and only if $\Omega$ is of the form $\Omega=B(x_0, r)$ for some $r>0$.
Also in the work of Lindgren-Lindqvist \cite[Theorem 4]{ll19}, it was demonstrated that $u$ does not belong to $C^{1,1}_{\text{loc}}(\Omega)$ if $\Omega_0$ is not a disk.
By Brustad \cite{b22},  the $\fz$-harmonic potential in $[-1,1]^2\setminus\{0\}$  is not everywhere twice differentiable. 
These findings provide valuable insights into the regularity of $\fz$-harmonic potentials in specific scenarios.

On the other hand, an interesting aspect of $\fz$-harmonic functions is their approximation in $C^{0,\gamma}$ and weakly in $W^{1,q}_\text{loc}$ by $p$-harmonic functions that share the same Dirichlet boundary value. This approximation property has been explored by Bhattacharya-DiBenedetto-Manfredi \cite{bdbm}, as well as in the works of \cite{j96,lm95}.

A function $v\in W^{1,p}(U)$ is termed $p$-harmonic if it serves as a weak solution to the $p$-Laplace equation:
 $$\Delta_pv:={\rm div}(|Dv|^{p-2}Dv)=0\quad\mbox{in $U$.}$$
In the quest for achieving $C^1$-regularity or higher-order regularity for $\fz$-harmonic functions (and, in particular, addressing Conjecture \eqref{c1IF}), a natural idea is to establish $C^1$-regularity or higher-order regularity for $p$-harmonic functions uniformly across all large values of $p$. This would effectively approximate $\fz$-harmonic functions with $p$-harmonic functions. However, this endeavor presents significant challenges. While $p$-harmonic functions have been shown to possess $C^{1,\alpha}$ regularity and some higher-order regularity, these results are heavily dependent on the specific value of $p$ and are not uniform for all large values of $p$. (References: \cite{ev82,im89,lu68,mw88,u77}, among others).

Moreover, in two-dimensional space ($n=2$), although Savin \cite{s05} has demonstrated that $\fz$-harmonic functions are $C^1$, the problem of approximating $\fz$-harmonic functions $v$ in $C^1$ by $p$-harmonic functions $v_p$ with the same Dirichlet boundary value, or equivalently, showing that $v_p$ has $C^1$-regularity uniformly across all large values of $p$, remains a challenging open question. The most recent developments, as presented in \cite{ll21,dpzz}, aided by \cite{kzz}, have shown that for any $\alpha>0$, $|Dv_p|^\alpha$ converges to $|Dv|^\alpha$ in $L^2_\text{loc}$, and $\det D[|Dv_p|^\alpha Dv_p]$ weakly converges to $\det D[|Dv|^\alpha Dv]$ in the sense of measures. These findings represent significant progress toward understanding the regularity of $\fz$-harmonic functions and their approximation by $p$-harmonic functions.

To initiate our exploration, we seek to gain insights into the $C^1$ and higher-order approximation questions for a specific class of $\fz$-harmonic functions, such as the $\fz$-harmonic potential $u$ within $\Omega$. For this purpose, let us consider $2 < p < \fz$ and introduce $u_p$ as a $p$-harmonic potential within $\Omega$. $u_p$ is defined as follows:

$u_p$ is a unique $p$-harmonic function in $\Omega$ with Dirichlet boundary values $u_p=i$ on $\partial\Omega_i$. In other words, it is the unique weak solution to the Dirichlet problem:
\begin{align}\label{IPTp}
 \mbox{  $\bdz_p u=0$ \mbox{in $\Omega$};
 $u_p=1$ on $ \partial\Omega_1$ and $ u_p =0$ on $ \partial \Omega_0$.}
 \end{align}
Based on the findings of \cite{bdbm}, we know that $u_p$ converges to $u$ in $C^{0,\gamma}(\overline\Omega)$ with $\gamma\in (0,1)$.

In the context of the previous discussions and the motivation provided, we can now address the following inquiries related to the approximation by $p$-harmonic functions.

\begin{itemize}
\item[] {\hspace{-0.6cm}\bf Question 1.1A.}  Is it accurate to assert that $u_p$ converges to $u$ in $C^1(\Omega)$? Equivalently, does $u_p$ belong to $C^1(\Omega)$ uniformly for all values of $p$?

\item[] {\hspace{-0.6cm}\bf Question 1.2A.}   Does $|Du_p|$ exhibit Sobolev regularity that is uniform across different values of $p$? Alternatively, does $|Du_p|$ converge to $|Du|$ within a certain Sobolev space?

\item[] {\hspace{-0.6cm}\bf Question 1.3A.} Can we establish that $u_p$ converges to $u$ in specific function spaces with second-order regularity? Moreover, what kind of regularity does the second-order derivative $D^2u_p$ possess, and is this regularity uniform across a range of values for $p>2$?
\end{itemize}
These questions aim to explore the regularity properties of $p$-harmonic functions as they approximate $\fz$-harmonic functions, providing insights into the convergence and regularity behavior in the context of different values of $p$.

In the two-dimensional space, i.e.\ $n=2$, some progress has been made toward addressing Questions 1.2A and 1.3A as in \cite{ll21,dpzz}.
Furthermore, in the specific case where both $\Omega_0$ and $\overline\Omega_1$ are convex, Lindgren-Lindqvist \cite{ll19,ll21} have made significant strides. They have demonstrated that $|Du_p|$ converges to $|Du|$ locally uniformly. However, the question of whether $Du_p$ converges to $Du$ locally and uniformly remains an open question, referred to as Question 1A. These developments represent substantial advancements in our understanding of the regularity and convergence behavior of $p$-harmonic functions with respect to $\fz$-harmonic functions in two dimensions.

\subsection{Main results: regularity and  approximation  in convex rings}\label{Section1.1}

The primary objective of this paper is to offer partial answers to Questions 1.1-1.3 by addressing, in part, Questions 1.1A-1.3A within the context of arbitrary convex ring domains in dimensions $n\ge 2$.

We say
   $\Omega$ is a  convex ring   in $\rn$ with $n\ge2$ if
  $\Omega=\Omega_0\setminus \overline \Omega_1$ is a ring domain, and both of $\Omega_0$ and $\overline \Omega_1$ are convex.  Here $\overline \Omega_1$ is a compact subset of $\Omega_0$.
Denote by $u $ the   $\fz$-harmonic potential   in $\Omega$, and for $p\in(2,\fz)$, by  $u_p$   the  $p$-harmonic potential   in $\Omega$.
  We  extend $u$ and $u_p$ continuously to whole $\Omega_0$ by setting $u=u_p=1$ in $\overline\Omega_1$.

Our initial result is a comprehensive answer to Question 1.1 and Question 1A.

 \begin{thm}\label{c1-re}
We have  $u\in C^1(\Omega)$ and $|Du|\ne 0$ in $\Omega$.
Moreover $u_p\to u$ in  $C^1(\Omega)$  as $p\to\fz$.

Consequently, for each $t\in(0,1)$ the level set $\{x\in\Omega|u(x)=t\}$ is a $C^1$-regular
$(n-1)$-dimensional surface.
\end{thm}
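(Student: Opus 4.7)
The plan is to obtain Theorem \ref{c1-re} by combining uniform-in-$p$ pointwise bounds on $|Du_p|$ with a convex-geometric argument for the directions $Du_p/|Du_p|$. Since $|Du_p|\to|Du|$ locally uniformly is already established in \cite{ll19,ll21} for convex rings, the remaining tasks are: (i) prove $|Du|>0$ on $\Omega$, and (ii) show $Du_p/|Du_p|$ converges to $Du/|Du|$ locally uniformly. Together these yield $Du_p\to Du$ locally uniformly, hence $u\in C^1(\Omega)$ and $u_p\to u$ in $C^1_{\mathrm{loc}}(\Omega)$; the $C^1$ regularity of the level sets then follows from the implicit function theorem applied to $u$ at any point where $Du$ is nonvanishing and continuous.

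For (i), the uniform upper bound $\|Du_p\|_{L^\infty(\Omega)}\le L$ is standard by comparison with a Lipschitz function matching the boundary values. The lower bound $|Du_p|\ge c(K)$ on compact $K\Subset\Omega$ is the main point and relies on Lewis's convexity theorem: each super-level set $\{u_p\ge u_p(x_0)\}$ is convex and contains $\overline\Omega_1$. For $x_0\in K$ with $u_p(x_0)=t_0$, I would pick a supporting hyperplane of $\{u_p\ge t_0\}$ at $x_0$ with inward unit normal $\nu$, and compare $u_p$ in the slab between this hyperplane and a parallel hyperplane tangent to $\overline\Omega_1$ against an explicit $p$-harmonic barrier (affine, or one-dimensional radial when $p\ne n$) matching values $t_0$ and $1$ on the two sides of the slab; this yields $|Du_p(x_0)|\ge(1-t_0)/w(K)$, where $w(K)$ is the slab width controlled by $K$ and the geometry of the ring. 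A symmetric construction using $\partial\Omega_0$ handles $x_0$ with $t_0$ close to $1$. Passing to $p\to\fz$ gives $|Du|\ge c(K)$ on $K$, and together with $|Du_p|\to|Du|$ locally uniformly this yields $|Du_p|\ge c(K)/2$ uniformly for all large $p$.

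For (ii), I would use everywhere differentiability of $u$ from \cite{es11a,es11b} (and \cite{s05} in dimension two): since $Du(x_0)$ exists and is nonzero by (i), the convex set $\{u\ge u(x_0)\}$ admits a \emph{unique} supporting hyperplane at $x_0$, with inward unit normal $Du(x_0)/|Du(x_0)|$. The convex sets $\{u_p\ge u_p(x_0)\}$ converge in Hausdorff distance to $\{u\ge u(x_0)\}$ as $p\to\fz$ (by uniform convergence $u_p\to u$), with $x_0$ on the boundary of each. A standard convex-geometric lemma then implies that the inward normals $Du_p(x_0)/|Du_p(x_0)|$ converge to the unique inward normal $Du(x_0)/|Du(x_0)|$. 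Pointwise convergence is upgraded to locally uniform convergence by an equicontinuity/compactness argument, using the equi-Lipschitz character of $\{u_p\ge t\}$ near $K$ furnished by the uniform bounds on $|Du_p|$. Multiplying directions by convergent magnitudes yields $Du_p\to Du$ locally uniformly.

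The main obstacle is the lower bound in (i): the slab width $w(K)$ can collapse as the supporting hyperplane of $\{u_p\ge t_0\}$ becomes nearly tangent to $\overline\Omega_1$ from outside (or as $t_0$ approaches $0$ or $1$); quantifying this carefully, using an interior cone property of $\overline\Omega_1$ and $\Omega_0$ together with the fixed distance from $K$ to $\partial\Omega$, is essential to keep $c(K)$ bounded below uniformly in $p$ and in $x_0\in K$.
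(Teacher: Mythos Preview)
Your proposal has a genuine gap in dimensions $n\ge 3$: you take $|Du_p|\to|Du|$ locally uniformly as an input from \cite{ll19,ll21}, but those works treat only the planar case. For $n\ge 3$ this convergence is part of what must be proved here, and the paper obtains it directly (Lemma~\ref{upc1}(ii)) rather than importing it. There is a second gap in your upgrade from pointwise to locally uniform convergence of $Du_p/|Du_p|$: uniform two-sided bounds on $|Du_p|$ do not by themselves yield equicontinuity of $x\mapsto Du_p(x)/|Du_p(x)|$; that would require some $C^{1,\alpha}$-type control uniform in $p$, which is precisely what is unavailable. The ``equi-Lipschitz character of the level sets'' you invoke gives regularity of the boundaries $\{u_p=t\}$ as sets, not equicontinuity of their Gauss maps.

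The paper's route avoids both issues via a single device: comparison with cones from above, which $u$ and each $u_p$ enjoy (Lemma~\ref{lewis}). For $x\in\Omega$ and any outer unit normal $\nu$ to the convex super-level set through $x$, the ball $B(x+r\nu,r)$ lies in the sub-level region, so $u$ attains its maximum over $\overline{B(x+r\nu,r)\cap\Omega_0}$ at $x$; cone comparison then gives the quantitative lower bound $-Du(x)\cdot\nu\ge (u(x)-u(x+r\nu))/r$ (Lemma~\ref{low-u}), hence $|Du(x)|\ge u(x)/\diam(\Omega_0)>0$, and differentiability forces $N_x=\{-Du(x)/|Du(x)|\}$. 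The paper then proves the \emph{diagonal} statement (Lemma~\ref{upc1}): for any $x_j\to x$ and $p_j\to\infty$, both $\nu_{x_j}^{p_j}\to\nu_x$ and $|Du_{p_j}(x_j)|\to|Du(x)|$, the latter via $|Du_{p_j}(x_j)|\le S^+_r(u_{p_j},x_j)$ for the upper bound and the cone lower bound passed to the limit for the lower bound. Diagonal convergence of this kind yields locally uniform convergence of $Du_p$ to the (continuous) $Du$ directly, with no equicontinuity hypothesis on $Du_p$ and no prior knowledge of $|Du_p|\to|Du|$. Your slab-barrier idea for the lower bound is in the right spirit, but the cone argument is cleaner and sidesteps the slab-width obstacle you flag: taking $r=r_{x,\nu}$ so that $x+r\nu\in\partial\Omega_0$ gives the bound immediately, with no further geometry needed.
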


In dimension $n=2$,   $u\in C^1(\Omega)$ can be attributed to the remarkable result on the $C^1$ regularity of $\fz$-harmonic functions established by Savin \cite{s05}.
Notably, Theorem \ref{c1-re} provides a significant outcome, demonstrating that $Du_p\to Du$ uniformly in each compact subset of $\Omega$. This result effectively resolves the previously mentioned open question raised by Lindgren-Lindqvist   \cite{ll19,ll21}.  It's worth noting that one should not expect the uniform convergence of
 $Du_p\to Du$  accross entire domain $\Omega$, as witted by the example in \cite{ll21}:  In  $\Omega=B(0,1)\setminus\{0\}\subset \rr^2$,
the $\fz$-harmonic potential  $u(x)=1-|x|$ and  the $p$-harmonic potential
 $u_p(x)=1-|x|^{\frac{p-2}{p-1}}$ with $p>2$. Obviously,  the convergence $Du_p\to Du$
is not uniform in $\Omega$.

Theorem \ref{c1-re} allows us to  establish   the following existence of trajectory (streamline).

\begin{cor} \label{streamline}
Given a point $x\in\Omega$,
there is a curve $\gz_x\in C^{0,1}([0,T_x];\overline\Omega)\cap C^1([0,T_x);\Omega)$  for some $T_x\in(0,\fz)$  as    a  trajectory (streamline)  of $u$  starting from $x$ and terminating at $\partial \Omega_1$, that is,
\begin{align}\label{ex-eq}
\frac{d\gz_x(t)}{d t}=Du(\gz_x(t))\quad \forall t\in [0,T_x) \quad \mbox{and }\ \quad \gz_x(0)=x,
\quad\gz_x(T_x)\in \partial  \Omega_1.
\end{align}
Moreover, the function $u\circ \gz _x $ is convex  and its speed $|Du\circ \gz_x|$ is nondecreasing.
\end{cor}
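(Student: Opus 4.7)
\emph{Overall strategy.} I will construct $\gz_x$ as a subsequential limit of the $p$-harmonic streamlines $\gz_x^p$ of $u_p$, inheriting the stated properties from the $p$-level via the $C^1$-approximation of Theorem \ref{c1-re}.

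\emph{$p$-level streamlines.} For each $p\in(2,\fz)$, the convex-ring structure yields $u_p\in C^{1,\az}_\loc(\Omega)$ with no critical points in $\Omega$ (see \cite{ll19,ll21}), so standard ODE theory furnishes a unique $C^1$-streamline $\gz_x^p:[0,T_x^p]\to\overline\Omega$ of $u_p$ starting at $x$ and terminating on $\partial\Omega_1$. Along $\gz_x^p$ one has $\tfrac{d}{dt}u_p(\gz_x^p(t))=|Du_p(\gz_x^p(t))|^2$, and the key $p$-level input --- to be extracted from level-set convexity of $u_p$ combined with the $p$-Laplace equation --- is that $|Du_p|$ is nondecreasing along $\gz_x^p$. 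This would then give $|Du_p(\gz_x^p(t))|\ge|Du_p(x)|$ and
\begin{align*}
T_x^p\le\frac{1-u_p(x)}{|Du_p(x)|^2},
\end{align*}
which is uniformly bounded in $p$ since $u_p\to u$ in $C^1_\loc(\Omega)$ and $|Du(x)|>0$ by Theorem \ref{c1-re}.

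\emph{Passage to the $\fz$-limit.} The uniform Lipschitz bound $|(\gz_x^p)'|\le\|Du_p\|_{L^\fz(\Omega)}$ together with Arzela-Ascoli then extract a subsequence $\gz_x^{p_j}\to\gz_x$ uniformly on $[0,T_x]$ for some $T_x\in(0,\fz)$; the locally uniform convergence $Du_{p_j}\to Du$ upgrades the limit to $\gz_x\in C^1([0,T_x);\Omega)\cap C^{0,1}([0,T_x];\overline\Omega)$ solving \eqref{ex-eq}. The $p$-level monotonicity of $|Du_p|$ passes to the limit under locally uniform convergence, so $|Du\circ\gz_x|$ is nondecreasing, whence $(u\circ\gz_x)'=|Du\circ\gz_x|^2$ is nondecreasing and $u\circ\gz_x$ is convex. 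Since $u(\gz_x(t))\ge u(x)>0$ while $u\equiv0$ on $\partial\Omega_0$, no cluster point of $\{\gz_x(t)\}$ at $T_x$ can lie on $\partial\Omega_0$, and therefore $\gz_x(T_x)\in\partial\Omega_1$.

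\emph{Main obstacle.} The principal difficulty is the uniform-in-$p$ monotonicity of $|Du_p|$ along streamlines of $u_p$ in a convex ring, which rests on a careful combination of level-set convexity of $u_p$ (cf.\ \cite{ll19,ll21}) with the $p$-Laplace equation. Once this is secured, the $C^1$-approximation of Theorem \ref{c1-re} turns the passage to $p=\fz$ into a routine stability argument, from which all remaining conclusions of the corollary follow.
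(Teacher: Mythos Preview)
Your plan is essentially the paper's own argument: build $p$-streamlines $\gz_x^p$, use monotonicity of $|Du_p|$ along them to bound $T_x^p$ uniformly, extract a limit via Arzel\`a--Ascoli, and upgrade using Theorem~\ref{c1-re}. Two points deserve correction. First, what you flag as the ``main obstacle'' is in fact a two-line computation: along $\gz_x^p$ one has $\tfrac{d^2}{dt^2}u_p(\gz_x^p)=2\Delta_\fz u_p(\gz_x^p)=-\tfrac{2}{p-2}|Du_p|^2\Delta u_p\ge0$, since Lewis \cite{l77} gives $u_p\in C^\fz(\Omega)$ and $-\Delta u_p\ge0$ in a convex ring; this already yields monotonicity of $|Du_p\circ\gz_x^p|$ and convexity of $u_p\circ\gz_x^p$. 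Second, your equicontinuity step has a genuine gap: $\|Du_p\|_{L^\fz(\Omega)}$ need not be finite when $\overline\Omega_1$ is degenerate (e.g.\ $\Omega=B(0,1)\setminus\{0\}$ gives $|Du_p(x)|=\tfrac{p-2}{p-1}|x|^{-1/(p-1)}\to\fz$ as $x\to0$), so the $p$-streamlines are not uniformly Lipschitz up to $T_x^p$. The paper fixes this with a uniform $C^{1/2}$ bound,
\[
|\gz_x^p(t)-\gz_x^p(s)|\le(t-s)^{1/2}\Bigl(\int_s^t|Du_p(\gz_x^p)|^2\,d\dz\Bigr)^{1/2}=(t-s)^{1/2}\bigl(\Phi_x^p(t)-\Phi_x^p(s)\bigr)^{1/2}\le(t-s)^{1/2},
\]
which suffices for Arzel\`a--Ascoli; the Lipschitz regularity of the \emph{limit} $\gz_x$ then follows a posteriori from $Du\in L^\fz(\Omega)$. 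Finally, to conclude $\gz_x(T_x)\in\partial\Omega_1$ you should pass the endpoint through the limit (each $\gz_x^p(T_x^p)\in\partial\Omega_1$ and $\partial\Omega_1$ is closed), rather than merely ruling out $\partial\Omega_0$, since that alone does not exclude $\gz_x(T_x)\in\Omega$.
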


In two-dimensional space, Lindgren-Lindqvist's work, as presented in \cite{ll19,ll21}, not only established the existence of streamlines, as demonstrated in Corollary \ref{streamline}, but also demonstrated their uniqueness. However, in higher dimensions $n\ge 3$, the question of the uniqueness of $\gz_x$ remains unclear and requires further investigation.

Next, we give partial answers to  Question 1.2 and Question 1.2A as below.
Here and below, by abuse of notation,  we use $|Du|^\alpha $ with $\az=0$ to  denote  $\ln |Du|$.

\begin{thm}\label{sob-re}
For any $\alpha\in\rr$,  one has $|Du|^\alpha\in W^{1,2}_\loc(\Omega)$,
$D|Du|^\alpha\cdot \frac{Du}{|Du|}=0 $ almost everywhere in $\Omega$,
and $|Du_p|^\alpha \to |Du|^\alpha$ weakly in $ W^{1,2}_\loc(\Omega)$ as $p\to\fz$.
\end{thm}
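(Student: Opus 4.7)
The plan is to transfer the statement from $u_p$ to $u$ by combining Theorem \ref{c1-re} with uniform-in-$p$ Sobolev estimates on $|Du_p|^\alpha$. Theorem \ref{c1-re} provides two essential inputs: first, for any $K\Subset\Omega$ there exist $0<m\le M$ and $p_0>2$, depending on $K$, such that $m\le|Du_p(x)|\le M$ for all $x\in K$ and $p\ge p_0$; hence $u_p$ is smooth on $K$ and the equation $\Delta u_p+\frac{p-2}{|Du_p|^2}\Delta_\infty u_p=0$ is nondegenerate there. Second, any weak $W^{1,2}_{\mathrm{loc}}$-limit of $|Du_p|^\alpha$ must coincide with $|Du|^\alpha$.

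The core step is a $p$-independent Caccioppoli estimate: for every $K\Subset\Omega$,
$$\int_K \bigl|D|Du_p|^\alpha\bigr|^2\,dx \le C(K,\alpha), \qquad p\ge p_1(K,\alpha,n).$$
This is obtained by differentiating $\Delta_p u_p=0$ in $x_k$, testing against $\eta^2(u_p)_k|Du_p|^{2\alpha-2}$ for a cutoff $\eta$, summing in $k$, and integrating by parts: this is the standard route to the $p$-harmonic Bochner identity, and after rearrangement one absorbs the $(p-2)$-weighted cross terms against the Hessian term $|D^2 u_p|^2$. Combining with the $L^\infty$-bounds above yields the estimate. Weak compactness and identification of the limit then give $|Du|^\alpha\in W^{1,2}_{\mathrm{loc}}(\Omega)$ and $|Du_p|^\alpha\to|Du|^\alpha$ weakly in $W^{1,2}_{\mathrm{loc}}(\Omega)$.

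For the orthogonality, on $\{|Du_p|>0\}$ we compute
$$D|Du_p|^\alpha\cdot\frac{Du_p}{|Du_p|}=\alpha|Du_p|^{\alpha-3}\Delta_\infty u_p=-\frac{\alpha|Du_p|^{\alpha-1}\Delta u_p}{p-2},$$
with the natural interpretation when $\alpha=0$ via $\ln|Du|$. Testing against $\varphi\in C_c^\infty(\Omega)$ and integrating the $\Delta u_p$ by parts (using the uniform $W^{1,2}(K)$-bound on $|Du_p|^{\alpha-1}$ from the previous step), the right-hand side is $O(1/p)$. Passing to the limit using the weak convergence just established and the uniform convergence $Du_p/|Du_p|\to Du/|Du|$ from Theorem \ref{c1-re}, we conclude $\int_\Omega \varphi\,D|Du|^\alpha\cdot Du/|Du|\,dx=0$ for every test $\varphi$, so the orthogonality holds almost everywhere.

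The main obstacle is the uniform Caccioppoli estimate. Although the equation is nondegenerate on $K$, the ellipticity ratio of the linearized $p$-Laplace operator grows like $p-1$, and naive Cordes-type estimates would lose a factor of $p$. Extracting a $p$-independent constant requires leveraging the precise algebraic structure of the $p$-Bochner identity and absorbing the $(p-2)$-weighted cross terms against the positive Hessian curvature term, in the spirit of the planar arguments of \cite{kzz,dpzz,ll21} now pushed to arbitrary dimension. A subsidiary concern is that the orthogonality integration-by-parts must be formulated so that the implicit feedback through $\Delta_\infty u_p=-|Du_p|^2\Delta u_p/(p-2)$ can be resolved uniformly for large $p$; this is handled by checking that the resulting factor $1-(\alpha-1)/(p-2)$ stays away from $0$.
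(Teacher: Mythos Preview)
Your outline for the orthogonality $D|Du|^\alpha\cdot Du/|Du|=0$ is essentially the paper's argument (Lemma~\ref{sob-point}): rewrite $D\ln|Du_p|\cdot Du_p=-\frac{1}{p-2}\Delta u_p$, integrate by parts, and pass to the limit.

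The genuine gap is in the uniform Caccioppoli estimate. You propose to differentiate the $p$-Laplace equation, test, and absorb the $(p-2)$-weighted cross terms ``in the spirit of the planar arguments of \cite{kzz,dpzz,ll21} now pushed to arbitrary dimension.'' But the planar mechanism does not push: the identity underlying \cite{kzz} is
\[
2\bigl[|D^2vDv|^2-\Delta v\,\Delta_\infty v\bigr]=|Dv|^2\bigl[|D^2v|^2-(\Delta v)^2\bigr],
\]
which is an \emph{equality} only in dimension $n=2$ (see Remark~\ref{rem-n-2}); in $n\ge 3$ the corresponding expression has an extra term $2\sum_{1\le i<j\le n-1}[(v_{x_ix_j})^2-v_{x_ix_i}v_{x_jx_j}]$ whose sign is a priori uncontrolled. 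Without a sign on this term, the standard Bochner/Caccioppoli route leaves you exactly with the factor-of-$p$ loss you flag, and the estimate does not close.

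The paper supplies the missing sign by using the geometry of the convex ring, not algebra: by Lewis, $u_p$ is quasi-concave, so the second fundamental form of each level set is non-positive definite, which forces $(v_{x_ix_j})^2-v_{x_ix_i}v_{x_jx_j}\le 0$ for tangential indices (Lemma~\ref{det-2}). This turns the planar identity into the inequality of Lemma~\ref{id1}, and then into the divergence inequality $\mathrm{div}\bigl(|Du_p|^{-2}(\Delta u_p\,Du_p-D^2u_p\,Du_p)\bigr)\ge 0$ (Lemma~\ref{id2}). Testing \emph{this} inequality against $\xi^2|Du_p|^{2\alpha}$ for $\alpha<0$ gives the $p$-independent bound directly (Lemma~\ref{sob-es}); the case $\alpha\ge 0$ is then a chain-rule consequence. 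Your proposal never invokes the quasi-concavity of $u_p$, and without it there is no known way to obtain the uniform-in-$p$ estimate in dimensions $n\ge 3$.
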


In dimension $n=2$,   Theorem \ref{sob-re} is essentially
a consequence of \cite{kzz,ll21}.  Indeed,
when $\alpha>0$,  it is a direct consequence of
  \cite{kzz,ll21}. The case where $\az\le 0$ then
follows from Theorem \ref{c1-re} as $ |Du|\ne 0$.

Finally, we concentrate on Question 1.3 and Question 1.3A.
We begin with the following equivalence in the degenerate case.

 \begin{thm}\label{th-geo}
Suppose that   $\Omega_1=\{x_0\}$ for some $x_0\in\Omega_0$.
The following are equivalent:
\begin{enumerate}
\item[(i)]  $\Omega=B(x_0,r)\setminus\{x_0\}$ for some $r>0$.
\item[(ii)]  $u\in C^2(\Omega)$.
\item[(iii)]  $u$ is concave.
\end{enumerate}

\end{thm}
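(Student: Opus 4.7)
The plan is to prove the four implications (i)$\Rightarrow$(ii), (i)$\Rightarrow$(iii), (ii)$\Rightarrow$(i), and (iii)$\Rightarrow$(i). The forward direction (i)$\Rightarrow$(ii),(iii) proceeds by explicit computation. If $\Omega_0=B(x_0,r)$, then rotational invariance of the Dirichlet problem together with uniqueness of the $\fz$-harmonic potential forces $u(x)=f(|x-x_0|)$ for a univariate $f$. The $\fz$-Laplace equation then reduces to the ODE $(f')^2 f''=0$, and the boundary conditions $f(0^+)=1$, $f(r)=0$ force $f(\rho)=1-\rho/r$. Hence $u(x)=1-|x-x_0|/r$ on $\Omega$, which is $C^\fz$ and concave, establishing both (ii) and (iii).

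For the reverse direction, the common aim is to show that each hypothesis forces $u$ to be a cone $u(x)=1-c|x-x_0|$ for some $c>0$; the boundary condition $u|_{\pa\Omega_0}=0$ then yields $\Omega_0=B(x_0,1/c)$ immediately. A preliminary observation, valid whenever $u$ is twice differentiable along the streamlines $\gz_x$ from Corollary \ref{streamline}, is that the $\fz$-Laplace identity gives $\frac{d^2}{dt^2}(u\circ\gz_x)=2(D^2u\,Du)\cdot Du=0$, so $u\circ\gz_x$ is in fact affine and $|Du|$ is constant along each streamline. Under hypothesis (ii), $C^2$-regularity makes this rigorous pointwise; the plan is then to upgrade per-streamline constancy of $|Du|$ to a global constant using the classical convexity and $C^1$-regularity of the level sets of $u$ (Theorem \ref{c1-re}) together with the Sobolev control in Theorem \ref{sob-re}. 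An alternative route, inspired by Lindgren--Lindqvist \cite{ll19}, is the contrapositive: show that $\Omega_0$ not a ball forces $u\notin C^{1,1}_\loc(\Omega)$. Under hypothesis (iii), set $v:=1-u$; then $v$ is convex, $\fz$-harmonic on $\Omega$, with $v(x_0)=0$ and $v|_{\pa\Omega_0}=1$. Cone comparison with the $\fz$-harmonic cones $c|x-x_0|$ yields $|x-x_0|/r_{\max}\le v(x)\le|x-x_0|/r_{\min}$, where $r_{\min}$ and $r_{\max}$ are the minimal and maximal distances from $x_0$ to $\pa\Omega_0$. Combining these bounds with the convexity of $v$ and the pointwise $\fz$-Laplace relation $D^2v\,Dv\cdot Dv=0$ at Alexandrov points (where $D^2v\ge 0$ also holds) is expected to force $D^2v\,Dv=0$ almost everywhere, hence $r_{\min}=r_{\max}$ and $\Omega_0=B(x_0,r_{\min})$.

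The main obstacle is the passage from per-streamline statements to the global cone structure: since all streamlines terminate at the excluded point $x_0\notin\Omega$, continuity of $|Du|$ on $\Omega$ alone cannot compare values of $|Du|$ across distinct streamlines at their common endpoint. In the $C^2$-case this rigidity is to be extracted from the convex level surfaces, the orthogonality of streamlines to these surfaces (using Theorem \ref{c1-re}), and the Sobolev regularity of $|Du|^\az$; in the concave case the key tool is Alexandrov's almost-everywhere twice-differentiability combined with the convexity-plus-$\fz$-harmonic identities. The 2D base case is due to Savin--Wang--Yu \cite{swy08}, and the extension to all $n\ge 2$ --- which is the novelty here --- rests on the foundational regularity results Theorems \ref{c1-re} and \ref{sob-re} and Corollary \ref{streamline} established earlier in the paper.
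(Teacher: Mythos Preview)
Your overall structure matches the paper's: (i)$\Rightarrow$(ii),(iii) by explicit computation, and for the converse directions you aim to show $|Du|$ is constant, then conclude via streamlines that $u$ is a cone. Your treatment of (iii)$\Rightarrow$(i) is essentially the paper's: Alexandrov twice differentiability plus the semidefiniteness $D^2u\le 0$ combined with $D^2u\,Du\cdot Du=0$ a.e.\ forces $D^2u\,Du=0$ a.e., hence $D|Du|^2=0$ a.e.\ in $W^{1,2}_\loc$, so $|Du|$ is a global constant. From there the streamline computation pins down the cone.

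The genuine gap is in (ii)$\Rightarrow$(i). You correctly isolate the obstacle --- per-streamline constancy of $|Du|$ does not by itself yield a global constant, because the only point shared by all streamlines is $x_0\notin\Omega$ --- but your proposed fix (level-set convexity, orthogonality, Sobolev control from Theorem~\ref{sob-re}) does not resolve it. Theorem~\ref{sob-re} only gives $D|Du|^\alpha\cdot Du/|Du|=0$ a.e., which is the weak form of the streamline constancy you already have pointwise from $u\in C^2$; it says nothing transverse to streamlines. The paper closes the gap with a separate lemma (Lemma~\ref{con-ze}): using the asymptotic cone behaviour of $\fz$-harmonic functions near an isolated singularity due to Savin--Wang--Yu \cite[Theorem 1.1]{swy08}, one shows
\[
\lim_{x\to x_0}|Du(x)|=\|Du\|_{L^\fz(\Omega)}=\frac{1}{\dist(x_0,\partial\Omega_0)}.
\]
Since every streamline terminates at $x_0$, this limit furnishes the common value of $|Du|$ along every streamline, after which the length computation gives $|x-x_0|<\dist(x_0,\partial\Omega_0)$ for all $x\in\Omega$. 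Note that \cite{swy08} is not merely ``the 2D base case'' here: its asymptotic result holds in all dimensions and is precisely the missing ingredient in your plan.
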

In dimension $n=2$, (i)$\Leftrightarrow$(ii) was already  proved by
Savin-Wang-Yu \cite[Corollary 1.2]{swy08}, where they do not require
the convexity of   $\Omega_0$.
 In dimension $n\ge3$,  it is unclear to us whether the convexity assumption of $\Omega_0$
in Theorem \ref{th-geo} can be removed.

In a general convex ring domain $\Omega$, we establish the  following regularity for distributional second order derivatives  in a
measure sense, which
 contributes to Question 1.3 and Question 1.3A.  The distributional derivative
$\mathcal Dw=(\mathcal D_iu)_{i=1}^n$  in the sense of distribution of a function $w\in L^1_\loc(\Omega)$ is defined via integration by parts, that is,
$$\int_\Omega \mathcal D_iw(x)\phi(x)\,dx=-\int_\Omega w(x) \phi_{x_i}(x)\,dx\quad \forall\phi\in C_c^\fz(\Omega).$$

\begin{thm}\label{secsob}

(i)
  The distributional derivatives $\mathcal D^2u$ are Radon measures satisfying
$$\int_\Omega \langle \mathcal D^2u Du, Du\rangle\xi \,dx=0
 \quad \forall  \xi\in C^\fz_c(\Omega).$$
 Moreover,   $Du \in BV_\loc(\Omega)$,  that is, the distributional derivatives $\mathcal D (Du)$  are Radon measures in $\Omega$,  and   $\mathcal D (Du)=\mathcal D^2u$.

(ii) $D^2 u_p\in L^1_\loc(\Omega)$ uniformly in $p\in[4,\fz)$
and $D^2 u_p\to \mathcal D^2u$ weakly in  the sense of measure, and
 $Du_p\to Du$ weakly in $BV_\loc(\Omega)$.
\end{thm}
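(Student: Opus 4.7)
The plan is to establish both (i) and (ii) by producing a uniform $L^1_\loc(\Omega)$ bound on $D^2u_p$ for $p\in[4,\fz)$ and then passing to the limit with the aid of the $C^1_\loc(\Omega)$ convergence of Theorem \ref{c1-re} (which, in particular, yields $|Du_p|\ge c_K>0$ on every compact $K\Subset\Omega$ uniformly for large $p$, so that $u_p\in C^\fz(K)$ by the standard elliptic bootstrap for $p$-harmonic functions with non-degenerate gradient) and the weak $W^{1,2}_\loc(\Omega)$ convergence of $|Du_p|^\alpha$ from Theorem \ref{sob-re}. The additional ingredient is Lewis's classical convexity theorem for $p$-capacitary functions in convex rings: in any dimension $n\ge 2$ and for every $p\in(1,\fz)$, the super-level sets $\{u_p\ge t\}$ with $t\in(0,1)$ are convex.

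\noindent At each $x\in K$, choose an orthonormal frame $(e_1,\dots,e_{n-1},\nu)$ with $\nu=Du_p/|Du_p|$ and $e_i\in\nu^\perp$, and set $a=\partial_\nu|Du_p|=\langle D^2u_p\,\nu,\nu\rangle$, $b_i=\partial_{e_i}|Du_p|=\langle D^2u_p\,\nu,e_i\rangle$, and $A=D^2u_p|_{\nu^\perp}$. Then
\[
|D^2u_p|^2=a^2+2|b|^2+|A|_F^2,\qquad |D|Du_p||^2=a^2+|b|^2.
\]
Since $\bdz_\fz u_p=|Du_p|^2a$, the $p$-Laplace equation $|Du_p|^2\bdz u_p+(p-2)\bdz_\fz u_p=0$ forces $\mathrm{tr}(A)=-(p-1)a$, while Lewis's theorem gives $A\le 0$; hence $a\ge 0$, $\bdz u_p=-(p-2)a\le 0$, and $|A|_F\le -\mathrm{tr}(A)=(p-1)a$. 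Combining these yields the pointwise matrix estimate
\[
|D^2u_p|\le (p-1)\,a\,+\,C\,|D|Du_p||\quad\text{on }K.
\]

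\noindent To turn this into a uniform $L^1_\loc$ bound, exploit $-\bdz u_p=(p-2)a\ge 0$. Testing against any non-negative cut-off $\phi\in C_c^\fz(\Omega)$ with $\phi\equiv 1$ on $K$ and integrating by parts,
\[
\int_K (p-2)a\,dx\le \int\phi\,(-\bdz u_p)\,dx=\int Du_p\cdot D\phi\,dx\le \|Du_p\|_{L^\fz(\mathrm{supp}\,\phi)}\,\|D\phi\|_{L^1}\le C_K
\]
uniformly in $p$. Combined with the uniform $L^2_\loc$ bound on $D|Du_p|$ from Theorem \ref{sob-re} (taking $\alpha=1$), this gives $\int_K(p-1)a\,dx+\int_K|D|Du_p||\,dx\le C_K$, whence $\int_K|D^2u_p|\,dx\le C_K$ uniformly in $p\in[4,\fz)$.

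\noindent The remaining assertions follow by standard weak-$*$ compactness and lower-semicontinuity. Up to subsequence $D^2u_p$ converges weakly-$*$ in the space of Radon measures, and the $C^1_\loc$ convergence of $u_p$ identifies the limit as $\mathcal D^2u$ in distribution; hence the whole family converges, $\mathcal D^2u$ is a Radon measure, $Du\in BV_\loc(\Omega)$ with $\mathcal D(Du)=\mathcal D^2u$, and $Du_p\to Du$ weakly in $BV_\loc(\Omega)$. For the $\fz$-Laplace identity in (i), rewrite $\bdz_\fz u_p=-|Du_p|^2\bdz u_p/(p-2)$; pairing with $\xi\in C_c^\fz(\Omega)$, the left side converges to $\int\langle\mathcal D^2u\,Du,Du\rangle\xi\,dx$ by the weak-$*$ convergence of $D^2u_p$ against the uniformly-convergent continuous weight $|Du_p|^2\xi$, while the right side is $O(1/(p-2))\to 0$ by the uniform $L^1_\loc$ bound on $\bdz u_p$. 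The main obstacle is the pointwise matrix estimate: without Lewis's convex-level-set theorem the tangential block $A$ could not be absorbed into $-\mathrm{tr}(A)$, and the $(p-1)a$ factor would be uncontrollable in $L^1$; convexity of both $\Omega_0$ and $\overline{\Omega_1}$ is therefore essential at the pointwise level, while the rest of the argument is a standard passage to the limit.
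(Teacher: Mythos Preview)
Your proof is correct and follows the same overall strategy as the paper: obtain a pointwise estimate of the form $|D^2u_p|\lesssim(-\Delta u_p)+|D|Du_p||$ on compact subsets, control $\int(-\Delta u_p)$ by testing against a cut-off, invoke the uniform $W^{1,2}_\loc$ bound on $|Du_p|$ from Theorem \ref{sob-re}, and then pass to the limit using the $C^1_\loc$ convergence of Theorem \ref{c1-re}. The passage to the limit and the identification of $\mathcal D^2u$, including the $\infty$-Laplace identity in (i), are handled exactly as in the paper.

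The one genuine difference is how you reach the pointwise Hessian bound. The paper packages the geometry into the algebraic inequality of Lemma \ref{id1}, namely $2[|D^2u_pDu_p|^2-\Delta u_p\,\Delta_\fz u_p]\ge |Du_p|^2[|D^2u_p|^2-(\Delta u_p)^2]$, which after inserting $-\Delta^N_\fz u_p=\frac1{p-2}\Delta u_p$ yields $|D^2u_p|\le 2|\Delta u_p|+2|D|Du_p||$ for $p>4$. You instead work in the adapted frame and use directly that the tangential block $A=D^2u_p|_{\nu^\perp}$ is negative semidefinite (this is precisely the claim \eqref{id1-1} inside the paper's proof of Lemma \ref{det-2}), together with the elementary fact $|A|_F\le|\mathrm{tr}(A)|$ for semidefinite matrices and $\mathrm{tr}(A)=-(p-1)a$ from the equation. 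Your route is a bit more transparent and bypasses the algebra of Lemma \ref{id1}; the paper's route has the advantage that Lemma \ref{id1} is already needed for Theorem \ref{sob-re} (via Lemma \ref{id2}), so no new ingredient is introduced. Either way the constants are equivalent, since $(p-1)a=\tfrac{p-1}{p-2}(-\Delta u_p)\le\tfrac32(-\Delta u_p)$ for $p\ge4$.
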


 Additionally, when $n=2$, we are able to prove the following  almost everywhere twice differentiability  of $u$, which also contributes to
Question 1.3 and Question 1.3A.

\begin{thm}\label{twice-re} In dimension $n=2$,  $u$ is twice differentiable and
  $Du$ are differentiable almost  everywhere in $\Omega$.
 The absolutely continuous part of the measure $\mathcal D^2u$ with respect to Lebesgue measure $dx$ is  given by  $D^2u\,dx$.

Consequently,  for almost  all  $x\in \Omega$ one has
 $-D^2uDu\cdot Du=0$,
 \begin{align}\label{Taylor-exp}
  u(x)=\frac 12 \left[u\left(x+hDu(x)
 \right)+u\left(x-hDu(x)
 \right)\right]
  +o(|h|^2) \quad \mbox{as $h\to0$,}
  \end{align}
 and
 \begin{align}\label{Taylor-exp2} u(x)=\frac{1}{2}\left(\max_{\overline{B(x,\ez)}}u+\min_{\overline{B(x,\ez)}}u\right)
 +o(\ez^2)\quad\mbox{
as $\ez\to0$.}  \end{align}
  \end{thm}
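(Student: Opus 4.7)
I split the proof into three stages: (a) classical twice differentiability of $u$ a.e., (b) identification of the absolutely continuous part of $\mathcal D^2 u$, and (c) the Taylor-type expansions \eqref{Taylor-exp} and \eqref{Taylor-exp2}. The PDE input used throughout is the pointwise identity $D^2 u(x)\,Du(x)\cdot Du(x)=0$ at a.e.\ twice-differentiability point, which is the classical restatement of $D|Du|^2\cdot Du=0$ a.e.\ from Theorem \ref{sob-re} with $\alpha = 2$ (using $|Du|>0$ from Theorem \ref{c1-re}).

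For stage (a), in the planar case, by the streamline analysis of Lindgren--Lindqvist \cite{ll19,ll21} in convex rings, the streamlines $\gz_x$ of Corollary \ref{streamline} are unique straight line segments on which $|Du|$ is constant, and they provide a continuous foliation of $\Omega$. Parametrize this foliation by $(\xi, s)$ with $\xi$ labelling the streamline (e.g.\ by arc length on $\pa \Omega_0$) and $s$ arc length along it; then $x(\xi,s) = p(\xi) + s\,\nu(\xi)$ with $p$ an arc-length parametrization of $\pa \Omega_0$, $\nu(\xi)$ the streamline direction, and $L(\xi)$ its length, so $|Du| = 1/L(\xi)$ and $u(x(\xi,s)) = s/L(\xi)$. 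Theorem \ref{sob-re} (with $\alpha = 1$) and Theorem \ref{secsob}(i) then transfer $W^{1,2}$ and $BV$ regularity respectively to $L$ and $\nu$ in $\xi$ under the coarea-type change of variables. Classical one-dimensional differentiation a.e.\ (Lebesgue's theorem and a.e.\ differentiability of BV functions of one variable) gives a full-measure set of $\xi$ at which both $L$ and $\nu$ are classically differentiable. At every such $\xi$, combined with differentiability of $p$ at $\xi$ (a.e.\ on the convex curve $\pa\Omega_0$), the chain rule shows that $Du(x(\xi, s)) = \nu(\xi)/L(\xi)$ is classically differentiable at $x(\xi,s)$; this set has full measure in $\Omega$.

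For stage (b), at such a classical twice-differentiability point the classical Hessian $D^2 u(x)$ coincides with the approximate differential of $Du$; by the Radon--Nikodym decomposition for BV vector fields applied to $\mathcal D(Du) = \mathcal D^2 u$ (Theorem \ref{secsob}(i)), this is the density of the a.c.\ part of $\mathcal D^2 u$, yielding $D^2 u\,dx$. For stage (c), at an a.e.\ $x$ the identity $D^2 u(x)Du(x)\cdot Du(x)=0$ holds, so substituting $v = \pm h\,Du(x)$ into the classical Taylor expansion and averaging gives \eqref{Taylor-exp}. For \eqref{Taylor-exp2}, since $Du(x)\ne 0$, the extrema of $u$ on $\overline{B(x,\ez)}$ are attained at distance $O(\ez^2)$ from $x \pm \ez\,Du(x)/|Du(x)|$, and the second-order Taylor expansion there yields
\begin{equation*}
\max_{\overline{B(x,\ez)}} u + \min_{\overline{B(x,\ez)}} u = 2u(x) + \ez^2\,\frac{D^2 u(x) Du(x)\cdot Du(x)}{|Du(x)|^2} + o(\ez^2) = 2u(x) + o(\ez^2).
\end{equation*}

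The main obstacle is stage (a): BV regularity of $Du$ alone only provides approximate differentiability a.e., and upgrading this to classical twice differentiability in 2D requires exploiting the streamline foliation to reduce the transversal variation of $u$ to a one-dimensional problem in $\xi$. Justifying the change of variables $x \leftrightarrow (\xi,s)$ on a full-measure set and cleanly transferring the Sobolev/BV regularities of $|Du|$ and $Du$ to the profiles $L$ and $\nu$ is where the main technical effort lies.
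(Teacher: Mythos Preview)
Your stage (c) is essentially correct and close to the paper's argument; stage (b) is fine in outline.  The genuine gap is in stage (a), and it is the heart of the matter.

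First, the claim that the streamlines are straight line segments in a general planar convex ring is not established in \cite{ll19,ll21}.  What Lindgren--Lindqvist prove is uniqueness of streamlines and constancy of $|Du|$ along each; this does not force $Du/|Du|$ to be constant along the streamline, so the parametrisation $x(\xi,s)=p(\xi)+s\,\nu(\xi)$ with a \emph{single} direction $\nu(\xi)$ per streamline is not justified.  Second, and more seriously, even granting a foliation by $C^1$ curves, the map $(\xi,s)\mapsto x$ is only known to be continuous (from $u\in C^1$), not bi-Lipschitz.  Transferring $W^{1,2}$ or $BV$ regularity of $Du$ to the one-dimensional profiles $L(\xi)$, $\nu(\xi)$ through a merely continuous change of variables is not valid in general, and you essentially need the differentiability of the foliation---which is differentiability of $Du$---to make the change of variables work.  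So the argument is circular at exactly the point you flag as ``the main technical effort''.

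The paper avoids this entirely.  The key observation (Appendix B) is that in dimension $n=2$ both $|Du|^2$ and each $u_{x_i}$ are \emph{monotone} functions: they satisfy the maximum and minimum principles on every ball $B(z,r)\Subset\Omega$.  This is inherited from the corresponding property for $u_p$ (a consequence of the quasi-regular structure of planar $p$-harmonic maps, Bojarski--Iwaniec \cite{bi87}) via the $C^1$-convergence of Theorem \ref{c1-re}.  Monotonicity plus a standard oscillation lemma of Lebesgue type then gives
\[
\frac{\osc_{B(z,r)} u_{x_i}}{r}\;\le\; C\,\frac{\|\mathcal D^2 u\|(B(z,2r))}{r^2}.
\]
Feeding in the quantitative bound \eqref{key4u} from Theorem \ref{secsob}, namely $\|\mathcal D^2 u\|(B)\le 2[-\Delta u](2B)+2\int_{2B}|D|Du||$, and using that the singular part of the nonnegative measure $-\Delta u$ satisfies $[-\Delta u]_s(B(z,r))/r^2\to 0$ for a.e.\ $z$ (a general Radon--Nikodym fact), one obtains $\lip(u_{x_i})(z)<\infty$ for a.e.\ $z$.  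Stepanov's theorem then yields classical differentiability of $Du$ a.e., and twice differentiability of $u$ follows.  The monotonicity step is the 2D ingredient you are missing; it replaces your foliation argument entirely and sidesteps any change-of-variables issue.
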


 The above almost  everywhere twice differentiability of $u$ is  optimal; indeed,
 by Brustad \cite{b22},  the $\fz$-harmonic potential in $[-1,1]^2\setminus\{0\}$  is not everywhere twice differentiable.

The concept of an $\infty$-harmonic function in a given domain $U\subset \rr^n$ is closely related to the satisfaction of equation \eqref{Taylor-exp2}.  This equivalence is detailed in the work of Parviainen-Manfredi-Rossi  \cite{pmr10}.
However, it's important to note that, in general, an $\infty$-harmonic function
 $v$ does not necessarily satisfy equation \eqref{Taylor-exp2} pointwise. This point is discussed in \cite{pmr10} as well.

The significance of Theorem \ref{twice-re} lies in its assertion that $\infty$-harmonic potentials within planar convex rings satisfy equation \eqref{Taylor-exp2} almost everywhere. This result is notable because equation \eqref{Taylor-exp2} was initially utilized by Oberman \cite{ob05} for constructing numerical approximations of $\fz$-harmonic functions. Additionally, it has roots in the Dynamic Programming Principle as described in \cite{as12,pssw09}. This highlights the practical utility and theoretical foundations of equation \eqref{Taylor-exp2} in various mathematical and computational contexts.

 \subsection{Key ideas to prove main results}\label{Section1.2}

We  are ready to sketch the ideas to our main results.
The quasi-concavity of $u_p$(that is convexity of  super level sets of $u_p$) obtained by Lewis
\cite{l77} play a  crucial role. Since $u_p\to u$ in $C^0(\Omega)$,
  the  quasi-concavity of $u$ follows;
see Section \ref{Section3}.  In Section \ref{Section2},
we recall several necessary facts about $\fz$-harmonic functions.

{\it Ideas to prove Theorem \ref{c1-re}.}
  First, for any $x\in\Omega$, denote by $N_x$ the outer normal unit vectors $\nu$ of the level set
$\{y\in \Omega:u(y)=u(x)\}$, that is,
$\nu\cdot(y-x)\le0$   whenever  $u(y)>u(x)$.
By choosing   some suitable cones and using comparison with cones from above,
 we get the following lower bound
\begin{align}\label{key1u}
 -Du(x)\cdot \nu \ge \frac{u(x)-u(x+t\nu )}{t} \quad \forall \nu\in N_x,\ \forall 0<t<\frac14{\rm dist}(x,\partial\Omega_0) ,
\end{align}
  in particular, $|Du(x)|>0$.
As a results of this,  together with the differentiability of $u$ at $x$,  we deduce that
  $ N_x $ consists of the unique vector $\nu_x:=-\frac{Du(x)}{|Du(x)|}$ and hence
  \eqref{key1u} holds with $\nu=\nu_x$; see Lemma \ref{low-u} in Section \ref{Section4}.

 Next,   thanks to \eqref{key1u}  with $\nu= \nu_x$ and  the  convexity of $\Omega_1\cup\{x\in \Omega :u(x)>u(z)\}$, we are able to obtain
  the continuity of $ \nu_x$ in $x\in\Omega$,
   that is,  $$\nu_x \to \nu_z\quad\mbox{when $x\to  z\in \Omega$.  } $$
 Using this,  \eqref{key1u}  with $ \nu_z$ and continuity of $u$ we further get
 \begin{align}\label{key2u}
\liminf_{x\to z}|Du(x)|\ge \frac1t[u(z)-u(x+t\nu_z)] \quad \forall z\in \Omega,\quad \forall 0<t<\frac 14{\rm dist}(z,\partial\Omega_0),
\end{align}
Sending $ t\to0$ we have $$\liminf_{x\to z}|Du(x)|\ge |Du(z)|.$$
Recalling $\limsup_{x\to z}|Du(x)|\le |Du(z)|$ by  Crandall-Evans-Gariepy \cite{ceg01}, one gets
the continuity of $|Du|$ at any $z\in\Omega$. Thanks to  $Du(x)=-|Du(x)|\nu_x$,
we know $Du \in C^0(\Omega)$. See Section \ref{Section5} for more details.

Observe that $u_p$ also enjoys comparison with cones from above by Lewis \cite{l77}.
Since
\eqref{key1u} holds for $u_p$ at any point $x$
 and with $\nu=-\frac{Du_{p }(x)}{|Du_{p}(x)|}$ (see Section \ref{Section4} for details),
by an argument similar to  above, we could show that $Du_{p_i}(x_i)\to Du(x)$
whenever $p_i\to\fz$ and $x_i\to x$.
 Thus $Du_p\to Du$ locally uniformly; see Section \ref{Section5}.

 {\it Ideas to prove Corollary \ref{streamline}.}  Theorem \ref{c1-re} allows us to employ the  idea from  Lindgren-Lindqvist \cite{ll19,ll21} in dimension $n=2$
  to prove Corollary \ref{streamline}. For reader's convenience, we give the details in the Appendix A.

{\it Ideas to prove Theorem \ref{sob-re}.}
 For any smooth quasi-concave function $v$, we  observe a crucial fundamental geometric  structural inequality
 \begin{align}\label{key3up} 2[| D^2v Dv|^2-\bdz v\bdz_\fz v]\ge |Dv|^2[| D^2v|^2-(\bdz v)^2]
\end{align}
and also a useful divergence formula
 \begin{align}\label{e6.x1-1}{\rm div}\left(|Dv|^{-2}(\bdz v Dv-D ^2v Dv)\right)\ge 0\quad \mbox{when $Dv\neq 0$}.\end{align}
Their proofs heavily rely  on
 the convexity of the super level sets of $u$, which  implies the
 non-positive definite of
second fundamental form of its boundary   and the  non-positivity of sectional curvatures.
See Section \ref{Section6} for  details.

Applying  the divergence formula \eqref{e6.x1-1} to $u_p$ and   testing $|Du_p|^{ \alpha}\phi^2$ for any $\alpha<0$ and any $\phi \in C^\fz_c(\Omega)$  we obtain that
$|Du_p|^\alpha\in W^{1,2}_\loc$ uniformly in  $p>2$.
Since $|Du_p|$ is bounded from above and away from zero uniformly in $p$,
from $|Du_p|\in W^{1,2}_\loc$ uniformly in  $p>2$,
we conclude $|Du_p|^\alpha\in W^{1,2}_\loc$  uniformly in $ p>2$ for any $\alpha\in \rr$.
Therefore, together with $C^1$-convergence of $u_p\to u$,   we conclude
$|Du|^\alpha\in W^{1,2}_\loc$ for all $ \alpha$.
 See Section \ref{Section7}  for  details.

{\it Ideas to prove Theorem \ref{th-geo}.}
If $ \Omega =B(x_0,r)\setminus\{x_0\}$, then the $\fz$-harmonic potential therein  is given by $ 1-\frac{|x-x_0|}r$,  which is obviously of $C^2(\Omega) $ and concave in $B(x_0,r)$.
It then suffices to show  (ii) $\Rightarrow$(i) and (iii) $\Rightarrow$(i).
The proof is given in Section \ref{Section8}.
The proof relies on the fact
\begin{align}\label{con-gl}
\lim_{x\to x_0}|Du(x)|=\frac{1}{
{\rm dist}(x_0,\partial\Omega_0)},
\end{align}
which is obtained  by using the asymptotic property of $ u$ around $x_0$ by Savin-Wang-Yu \cite{swy08} and Lemma \ref{low-u}(that is \eqref{key1u} with $\nu=\nu_x$).

If $u\in C^2$, there exists a unique
 streamlines $\gz_x$ starting from any point  $x\in \Omega$ and ending at $x_0$.
Since $u\in C^2$ implies $\Delta _\fz u=0$ everywhere,  we know that
 $|Du|$  is a constant  along  $\gz_x$, and hence by \eqref{con-gl},
  is  given by $\frac{1}{{\rm dist}(x_0,\partial\Omega_0)}.$
The speed $|\dot \gz_x|$ is also $\frac{1}{
{\rm dist}(x_0,\partial\Omega_0)}.$    By a direct calculation one has
$ |x-x_0|\le  {\rm dist}(x_0,\partial\Omega_0) $ for all $x\in\Omega$.  Thus  (ii) $\Rightarrow$(i).

  If $u$ is concave, then $u$ is twice differentiable almost everywhere.
The concavity of $u$ further show that $D^2u(x)\xi\cdot\xi\le 0$
for all $\xi\in\rn$ and almost all $x\in\Omega$. This allows us to get
$D^2uDu=0$ almost everywhere.
Since $D|Du|^2=D^2uDu$ almost everywhere and $|Du|^2\in W^{1,2}_\loc(\Omega)$,
we know that $ |Du|$ is a constant, and hence by \eqref{con-gl},
  is  given by $\frac{1}{{\rm dist}(x_0,\partial\Omega_0)}.$
With this in hand, via streamline we do prove that $\Omega=B(x_0,r)$ similarly,
that is, (iii) $\Rightarrow$(i).

 {\it Ideas to prove Theorem \ref{secsob}.}  Applying \eqref{key3up} to $u_p$ and using the equation
 $-\bdz_p u=0$, we derive that
 $$| D^2u_p|\le 2|\bdz u_p|+2| D|Du_p||\quad \forall p\in [4,\fz).$$
 Since $-\Delta u_p$ is nonnegative  and $| D|Du_p||\in L^1_\loc(\Omega)$ uniformly in $p\in[4,\fz)$, we know that  $| D^2u_p|\in L^1_\loc(\Omega)$ uniformly in $p\in[4,\fz)$.
 So $D^2u_p$ weakly converges to some Radon measure $\mu$.
Note that $\mathcal D^2u$ coincides with $\mu$ in the sense of  distribution.
Moreover, we also derived a quantitative upper bound
\begin{align}\label{key4u}
\|\mathcal D^2u\|(B(x,r)) \le 2[-\bdz u](B(x,2r))+2\int_{B(x,2r)}| D|Du||\,dx,
\end{align}
where $-\bdz u$ is a nonnegative Radon measure.
 Here the measure $\|\mu \|$ stands for the total variation
of a signed Radon measure $\mu$. If $\mu$ is nonnegative, we write
 $\|\mu\|=\mu$.

  {\it Ideas to prove Theorem \ref{twice-re}.}  To obtain almost everywhere twice differentiability
  of $u$,  it suffices to prove almost everywhere  differentiability of $Du$.
 By Rademacher's theorem \cite{r19} (see also \cite{s25}), one only needs to show the pointwise Lipschitz constant $\lip (Du)(x)<\fz$   for almost all $x\in\Omega$.

  To this end, we observe a monotonicity property of $Du$ in Appendix B, which allows us to
  bound
  $$\lip (Du)(x)\le \limsup_{r\to0}\frac{\|\mathcal D^2u\|(B(x,r))}{r^2}.$$
Considering  \eqref{key4u}, noting $ |D|Du||\in L^2_\loc(\Omega)$ implies
$$\lim_{r\to 0}\mint-_{B(x,2r)}| D|Du||\,dx<\fz\ \mbox{for almost all  $x\in \Omega$,}$$
we still need to check
$$\limsup_{r\to0}\frac{[-\Delta u](B(x,r))}{r^2}<\fz \ \mbox{for almost all $x\in \Omega$}.$$
This holds because the singular part $[-\Delta u]_s$ of  the  measure $-\Delta u$ satisfies
 $$ \frac{[-\bdz u]_s(B(x,2r)) }{r^2}\to 0 \ \mbox{for almost all $x\in \Omega$}.$$

 \subsection{Equal importance criteria, interpolation by propagation, $\fz$-harmonic potential and approximation by $p$-harmonic potentials}\label{Section1.3}

The need to reconstruct intermediate shapes that gradually transition from a given source shape to a desired target shape is a fundamental requirement in a variety of scientific fields. This process forms a critical component in the study of shape metamorphism, surface reconstruction, image interpolation, and other related areas. Relevant references include \cite{cepb04,cis06,cms98,cp98,cp99,orb18,scb03}, among others.

To be more specific, consider two distinct compact $(n-1)$-dimensional surfaces, denoted as   $S_0$ and $S_1$.
The objective is to construct a family $\{S(t)\}_{s\in[0,1]}$ of  $(n-1)$-dimensional surfaces  so that $S(0)=S_0 $, $S(1)=S_1$, and $S(t) $ maintains continuity for $t\in[0,1]$.
Additionally, for each $t\in(0,1)$, $S(t) $  is expected to exhibit a certain level of smoothness, and the entire family $\{S_t\}_{t\in[0,1]} $  is intended to span the metamorphism region
$\overline \Omega=(\overline\Omega_0\setminus  \Omega_1)\cup
(\overline\Omega_1\setminus  \Omega_0)$, where $\Omega_i$  is the    domain  enclosed by  $S_i$ for $ i=0,1$. This process is crucial for a wide range of applications, facilitating the smooth transition between different shapes while ensuring continuity and smoothness in the intermediate shapes.

The goal is to represent the desired intermediate surfaces using a reconstruction and interpolation function
 $u:\overline\Omega\to\rr
$ satisfying $\{x\in\overline\Omega|u(x)=i\} =S_i$ for $i=0,1$ and hence
the Dirichlet boundary  condition
\begin{align}\label{bv}\mbox{
 $u=1$ on $S_1=\partial\Omega_1$ and $ u =0$ on $S_0=\partial\Omega_0$,}
 \end{align}
where it is assumed that,
\begin{align}\label{omega}\mbox{$S_0\cap S_1=\emptyset$ and $\Omega\ne\emptyset$,  that is,  either $\overline \Omega_0\subset\Omega_1$ or $\overline \Omega_1\subset\Omega_0$.}
\end{align}
Without loss of generality,  we can assume that $\overline  \Omega_1\subset \Omega_0$.

For any reconstruction or interpolation function $u$  that meets the criteria of $u\in C^1(\Omega)\cap C^0(\overline \Omega)$ and  $|Du|\ne 0$ in $\Omega$,  the implicit function theorem allows us to construct a collection of level surfaces:
$$\left\{\{x\in\overline\Omega|u(x)=t\}\right\}_{t\in[0,1]}.$$
These level surfaces provide a means to select intermediate surfaces or interpolation surfaces between the images $S_0,S_1$.
One may also  view
 $$ \mathcal S=\{(x,z)\in\overline\Omega\times[0,1]|u(x)=z,z\in[0,1]\}=\cup _{t\in[0,1]}S(t)\times\{t\}$$
as a reconstructed  $n$-dimensional surface
 from cross-sectional $(n-1)$-dimensional surfaces  $S_0$ and $S_1$, that is,
 $\mathcal S\cap \rr^n\times\{i\}=S_i$ for $i=0,1$. This approach allows for the smooth construction of intermediate surfaces based on the given boundary conditions and the choice of the interpolation function.

The generation of suitable reconstruction functions and the creation of intermediate shapes that fulfill various requirements are fundamental challenges in diverse scientific fields. The literature, as found in references such as \cite{cms98,cis06,cp99,ob05}, has introduced several critical constraint principles for the reconstruction process. Correspondingly, various numerical methods have been developed and widely applied.

It's important to note that different sets of constraints can lead to the study of solutions to different partial differential equations. This demonstrates the versatility and adaptability of reconstruction techniques to address the specific needs of different scientific domains.

 In the absence of any information about the reconstruction process,
we expect that
 every point in $\Omega$ is equally important and contributes similarly to
 the reconstruction  process, and that any other assumption means that we know something about   reconstruction functions.   This  natural constraint is now known as {\it equal importance criteria}, which was originally introduced by \cite{cepb04,cms98,cp99} in the study of shape metamorphism and surface reconstruction.
It was further formulated therein by requiring that the change of the reconstruction function $u$ in the gradient-magnitude along direction  is $0$, that is,
\begin{align}\label{eic}
D|Du|\cdot \frac{Du}{|Du|}=0 \quad\mbox{in $\Omega$}.
\end{align}
 This implies, along each trajectory of the gradient of $u$, the magnitude of the gradient is  a constant;
  in another words, the height of $u$ decrease linearly from 1 to 0,
 and the level surfaces of $u$
 are equally distributed along the gradient.
Together with Dirichlet boundary condition, the {\it equal importance criteria} leads to the problem:
\begin{align}\label{eicpb}
\mbox{Find a function $u:\overline\Omega\to\rr$ to solve \eqref{eic} with  Dirichlet boundary condition \eqref{bv}.}
\end{align}
We expect that a solution $u$ to \eqref{eicpb} should enjoy a nice regularity that $u\in C^0(\overline \Omega)\cap C^1(\Omega)$,  $Du\ne 0$ and $ |Du|$ has partial derivative $0$ along the direction $Du$.

 Moreover, to find a suitable interpolation function as required by image process,
  a remarkable  {\it interpolation algorithm via  propagation} was introduced by  Casas and Torres  \cite{ct96}; see also Caselles-Morel-Sbert \cite{cms98}.    Suppose  that $u$ is the corresponding interpolation function obtained by this algorithm.
 If $u$ is $C^2$ (even twice differentiable) at $x$,   the propagation here requires
  that \begin{align}\label{ia1}
u(x)=\frac 12\left[u(x+h Du(x))
-u(x-h Du(x)
)\right]+o(h^2).
\end{align}
By Taylor expansion, letting
$h\to 0$ one gets
\begin{align}\label{inter}\Delta_\fz u(x):=D^2 u(x)Du(x)\cdot Du(x)=0.\end{align}

  This enables us to consider the problem:
  \begin{align}\label{eicint}
\mbox{Find a function $u:\overline\Omega\to\rr$ to solve  \eqref{inter} in $\Omega$ with Dirichlet boundary condition \eqref{bv}.}
\end{align}
Note the above algorithm does not yield a $C^2$ interpolation function
necessarily, even  which
  is generally desirable in numerical analysis.
  One may ask whether  interpolation functions  or  solutions to \eqref{eicint} are  twice differentiable  almost everywhere so that \eqref{ia1} and \eqref{inter} hold almost everywhere.

\begin{center}
\begin{tikzpicture}
\draw plot[smooth] coordinates
{(-2,0.2) (-0.5, 0.5) (1, 0.2) (2.5, 0.5)}; 
\draw[smooth, dotted, thick, color=gray]  plot[smooth] coordinates{(-2,-0.8) (-0.5, -0.5) (1, -0.8) (2.5, -0.5)};\draw (-0.6, -0.5) node[below]{$x$};
\draw (-0.5, -0.5)--(-0.5, 0.5);\draw (-0.5, 0.5) node[above]{$x+hDu(x)$};

\draw (-0.5, -0.5)--(-0.4, -1.5);\draw (-0.4, -1.6) node[below]{$x-hDu(x)$};
\draw plot[smooth] coordinates{(-2,-1.8) (-0.5, -1.5) (1, -1.8) (2.5, -1.5)}; 
\draw (0,-2.4) node [below] {${\rm FIG \ 1: Interpolation\ by\ propagation}$};
\end{tikzpicture}
\end{center}

The equation  $\Delta_\fz u=0$ given in \eqref{inter} is exactly  the
$\fz$-Laplacian equation as given in Section \ref{Section1.1}, and its viscosity solutions are called
$\fz$-harmonic functions.  The equation \eqref{eic}  is written as the the normalized $\fz$-Laplacian equation  $\Delta^N_\fz u  =0$, where
 $$  \Delta^N_\fz u := \frac{\Delta_\fz u}{|Du|^2}=D|Du| \cdot \frac{Du}{|Du|}    \quad \mbox{when $u\in C^2$ and $Du\ne0$}.$$
  By Peres-Schramm-Sheffield-Wilson \cite{pssw09},  $\fz$-harmonic functions  are exactly viscosity solutions to the equation $\Delta^N_\fz u=0$.  For more background of $\fz$-harmonic functions we refer to the beginning of Section \ref{Section1}.
Therefore, the problem \eqref{eicpb} and the problem \eqref{eicint} lead to the Dirichlet problem
$$
 \mbox{ $\bdz_\fz u=0$ \mbox{in $\Omega$};
 $u=1$ on $S_1=\partial\Omega_1$ and $ u =0$ on $S_0=\partial\Omega_0$.}
$$
By Jensen \cite{j93},  the $\fz$-harmonic potential in $ \Omega$ is the unique viscosity solution to such
Dirichlet problem, and therefore 
 provides a viscosity solution to the problem \eqref{eicint}, and   also  a `` weak'' solution to \eqref{eicpb}.
 The regularity required by reconstruction functions in the problem \eqref{eicpb} and
 by interpolation functions in the problem \eqref{eicint}  lead to the study of the regularity of    $\fz$-harmonic potential, and in particular, Questions 1.1-1.3.
   Our finding in Section \ref{Section1.1} gives a limited insightful understanding
    of theses questions.

Since  Jensen \cite{j93} identified  $ \fz$-harmonic functions   with absolute minimizers for   $L^\fz$-functional
$F(v,\Omega)=\||Dv|^2\|_{L^\fz(\Omega)}$.
The {\it  equal importance criteria} somehow ask that the reconstruction
functions is absolute minimizer for $L^\fz$-functional
$F(v,\Omega)$ by  Aronsson in 1960's \cite{ar65, ar66,ar67}.
 Meanwhile, in the shape metamorphism and surface reconstruction,
  it is also quite often to  get the reconstruction  via  the constraint   minimizing the $L^p$-functional $$E_p(v,\Omega)=\frac1p\int_\Omega
  |Dv|^p\,dx,$$
  where $1<p<\fz$; see  Cong-Esser-Parvin-Bebis \cite{cepb04}.
The corresponding Euler-Lagrange equation reads as $\Delta_pv=0$ in $\Omega$,
  where the $p$-Laplacian
  $$ \Delta_pv={\rm div}(|Dv|^{p-2}Dv).$$
This leads to the  Dirichlet problem
 \begin{align}\label{acp}\Delta_pv=0 \mbox{ in $\Omega$}; \ v =0 \ {\rm on}\  \partial\Omega_0, \
v=1 \ {\rm on}\   \partial \Omega_1. \end{align}
This problem admits a unique weak solution $u_p\in W^{1,p}(\Omega)\cap C^{0}(\overline\Omega)$.
 This also arises from the an axiomatic approach to image interpolation,
 in Caselles-Morel-Sbert\cite{cms98}.
Such $u_p$ was already known to have some better regularity, say $C^{1,\gz}$-regularity and also some higher order regularity but all of them are not uniformly in  all $p\in (2,\fz)$.

It is valuable to comprehend  both the similarities and distinctions
among reconstruction functions obtained by different approaches.
Observe that, letting $p\to\fz$, the energy functional $E_p(v,\Omega)$ goes to the $L^\fz$-functional
$F(v,\Omega)$;
 the normalized $p$-Laplacian  $\frac1p|Dv|^{2-p}\Delta_pv$ converges to  the normalized $\fz$-Laplacian $\Delta_\fz^N v$ formally;
  the $p$-harmonic potentials  $u_p$  converges to
 the $\fz$-harmonic potentials  $u$  in $C^{0,\gz}(\Omega)$ and weakly in $W^{1,q}(\Omega)$.
 However, to consider the regularity and stability required by numeric analysis,
some higher order  approximation are expected. Our answers to Question 1.1A-1.3A
partially solve these problems.

\section{Basic properties of $\fz$-harmonic functions}\label{Section2}
In this section we recall the definition of $\fz$-harmonic functions and also several useful properties.  Let $U$ be an arbitrary bounded domain of $\rn$.

\begin{defn}
A function $v\in C^0(U)$ is  a viscosity subsolution to
\begin{align}\label{inf-eq}
-\bdz_\fz w=0\quad {\rm in}\quad U
\end{align}
 if 
at any $x_0\in U$, for any $\phi\in C^2(U)$ satisfying
$$v(x_0)-\phi(x_0)\ge v(x)-\phi(x)\quad \forall x\in B(x_0,r)\Subset U\ \mbox{for some $r>0$},$$
one has
$-\bdz_\fz \phi(x_0)\le 0.$

A function $v\in C^0(U)$ is a viscosity supersolution to
\eqref{inf-eq} if
$-v$ is a viscosity supersolution of \eqref{inf-eq}.

A function $v\in C^0(U)$ is
a viscosity solution to \eqref{inf-eq} if it is both a viscosity subsolution and a viscosity
supersolution to \eqref{inf-eq}.

Viscosity solutions to \eqref{inf-eq} are called $\fz$-harmonic functions in $U$.
\end{defn}

The following property is well-known; see for example  Crandall \cite{c08}.

\begin{thm} If $v\in C^{0,1}(\overline U) $ is $\fz$-harmonic in $U$,
then $\lip(v,U)=\lip(v,\partial U)$.
\end{thm}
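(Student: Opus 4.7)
Since $v \in C^{0,1}(\overline U)$, the inequality $\lip(v,\partial U) \le \lip(v,U)$ is immediate, so the task is to prove the reverse. Setting $L := \lip(v,\partial U)$, the plan is to exploit the fact that $\fz$-harmonic functions satisfy comparison with cones, deploying cones whose apex sits first on the boundary of $U$ and then in the interior.

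First I would handle the interior-to-boundary estimate. Fix $z_0 \in \partial U$ and consider $C_{z_0}(z) := v(z_0) + L|z - z_0|$. A direct calculation shows that away from its apex this cone is a classical solution of $-\bdz_\fz w = 0$, so it is $\fz$-harmonic throughout $U$ (because $z_0 \notin U$). On $\partial U$ the definition of $L$ gives $C_{z_0} \ge v$, so Jensen's comparison principle \cite{j93} yields $v \le C_{z_0}$ in $U$; the analogous lower cone $v(z_0) - L|z-z_0|$ handles the opposite inequality. Consequently,
$$|v(x) - v(z_0)| \le L|x - z_0| \qquad \forall\, x \in U,\ z_0 \in \partial U.$$

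Next I would upgrade this to an interior-to-interior estimate by moving the apex inside $U$. Fix distinct $x, y \in U$ and set $C_y(z) := v(y) + L|z - y|$, which is again $\fz$-harmonic on $\rn \setminus \{y\}$. I would verify $v \le C_y$ on the boundary of the punctured domain $U \setminus \{y\}$: on $\partial U$ this is exactly the first-step estimate $v(z_0) - v(y) \le L|z_0 - y|$, and at the singleton $\{y\}$ both sides equal $v(y)$. Comparison with cones from above for $\fz$-harmonic functions then gives $v \le C_y$ on all of $U \setminus \{y\}$. Taking $z = x$ yields $v(x) - v(y) \le L|x - y|$, and the symmetric argument with apex at $x$ and the lower cone gives the reverse bound, hence $\lip(v,U) \le L$.

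The only delicate point is invoking cone comparison in the punctured domain $U \setminus \{y\}$, whose ``boundary'' contains the isolated interior point $y$. This is precisely the comparison-with-cones characterization of $\fz$-harmonicity of Crandall-Evans-Gariepy \cite{ceg01}; alternatively, one exhausts $U \setminus \{y\}$ by subdomains $V_k \Subset U$ avoiding $y$ and uses the uniform continuity of $v$ on $\overline U$ together with the first-step estimate to control $v - C_y$ on $\partial V_k$ with vanishing error, then passes to the limit. This is bookkeeping rather than a genuine obstacle, so the whole proof is essentially a transcription of Aronsson's absolutely minimizing Lipschitz extension property at the level of viscosity solutions.
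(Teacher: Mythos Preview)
Your argument is correct; it is the standard two-step cone comparison proof found in Crandall \cite{c08} and going back to Aronsson. Note that the paper itself does not prove this theorem --- it is quoted in Section~\ref{Section2} as a well-known background fact with a reference --- so there is no in-paper proof to compare against.

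One minor streamlining: in your second step you can sidestep the $V\Subset U$ technicality and the exhaustion argument entirely by applying Jensen's comparison principle directly on the domain $U\setminus\{y\}$, exactly as you did in step one. Both $v$ and the cone $C_y$ belong to $C^{0,1}(\overline{U\setminus\{y\}})$ and are $\fz$-harmonic there (the apex $y$ has been excised), so the comparison principle stated just before this theorem in the paper applies verbatim and yields $v\le C_y$ on $U\setminus\{y\}$ without further work.
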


  Jensen \cite{j93}  established a comparison principle.
 \begin{thm} If $v_1 \in C^{0,1}(\overline U) $ is viscosity subsolution to \eqref{inf-eq} in $U$ and $v_2 \in C^{0,1}(\overline U) $ is viscosity supersolution to \eqref{inf-eq} in $U$,
   then  $$\max_{\overline U} [v_1-v_2]=\max_{\partial U}[v_1-v_2].$$
\end{thm}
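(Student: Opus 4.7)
The plan is to execute Jensen's classical strategy: perturb $v_1$ to a \emph{strict} viscosity subsolution, regularize via sup/inf-convolutions to obtain almost-everywhere twice-differentiable approximants, and contradict an interior maximum of the difference using Aleksandrov's theorem. The central obstacle is the degeneracy of $-\bdz_\fz w=0$ when $Dw$ vanishes: a direct doubling-of-variables argument applied to the pair $(v_1,v_2)$ produces paired touching test functions $\phi,\psi$ with $-\bdz_\fz\phi\le 0\le -\bdz_\fz\psi$, which is not contradictory when the common gradient at the touching point vanishes.

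First I would carry out a perturbation reduction. For each small $\delta>0$ I introduce
\[
\tilde v_1^\delta(x):=v_1(x)+\delta\, e^{\lambda x_1},
\]
with $\lambda>0$ chosen large in terms of $\lip(v_1,\overline U)$ and $\diam U$. Since $e^{\lambda x_1}$ is smooth and satisfies $\bdz_\fz e^{\lambda x_1}=\lambda^4 e^{3\lambda x_1}>0$ pointwise, a viscosity test-function computation (treating the touching inequality $\phi\ge\tilde v_1^\delta$ with $\tilde v_1^\delta-\delta e^{\lambda x_1}=v_1$) shows that $\tilde v_1^\delta$ is a \emph{strict} subsolution in the sense $-\bdz_\fz\tilde v_1^\delta\le-\eta$ for some $\eta=\eta(\delta,\lambda)>0$. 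Because $\tilde v_1^\delta\to v_1$ uniformly on $\overline U$ as $\delta\to 0^+$, it suffices to establish
\[
\max_{\overline U}[\tilde v_1^\delta-v_2]=\max_{\partial U}[\tilde v_1^\delta-v_2]\quad\text{for every }\delta>0,
\]
and then send $\delta\to 0^+$.

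Next I would regularize. Let $V_1^\ez$ be the sup-convolution of $\tilde v_1^\delta$ and $V_2^\ez$ the inf-convolution of $v_2$ at scale $\ez>0$. Standard viscosity theory shows that on the subdomain $U_\ez:=\{x\in U:\dist(x,\partial U)>C\sqrt{\ez}\}$, the function $V_1^\ez$ is semiconvex and remains a strict subsolution with defect at least $\eta/2$, while $V_2^\ez$ is semiconcave and remains a supersolution, and $V_i^\ez\to v_i$ uniformly as $\ez\to 0^+$. Aleksandrov's theorem gives almost-everywhere twice differentiability of both $V_i^\ez$. Arguing by contradiction, suppose $V_1^\ez-V_2^\ez$ attains its maximum over $\overline U$ at some interior point $x_\ez\in U_\ez$ for sufficiently small $\delta$ and $\ez$. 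Jensen's interior-maximum lemma for the difference of a semiconvex and a semiconcave function then produces, arbitrarily close to $x_\ez$, a common point $x$ of twice differentiability at which $DV_1^\ez(x)=DV_2^\ez(x)=:\xi$ and $D^2V_1^\ez(x)\le D^2V_2^\ez(x)$. Substituting into the pointwise inequalities (valid where the $V_i^\ez$ are twice differentiable) yields
\[
-\langle D^2V_1^\ez(x)\,\xi,\xi\rangle\le-\eta/2<0\le-\langle D^2V_2^\ez(x)\,\xi,\xi\rangle,
\]
contradicting the Hessian ordering.

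The hardest step is the construction of the strict subsolution with a \emph{quantitative} viscosity defect $\eta$ that survives the sup-convolution uniformly in $\ez$: the vanishing-gradient case in the viscosity test must be treated carefully, and the defect must be tracked through the convolution to preserve a uniform lower bound. Once that ingredient is in place, the remaining pieces---sup/inf-convolution properties, Aleksandrov twice-differentiability, and Jensen's interior-maximum lemma for semiconvex-minus-semiconcave differences---are standard, and the contradiction is immediate from the Hessian comparison.
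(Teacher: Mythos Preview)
The paper does not prove this theorem; it merely quotes the comparison principle as a known result of Jensen \cite{j93}. So there is no in-paper argument to compare against, and your task is really to reproduce one of the known proofs.

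Your proposed route has a genuine gap at the very first step. The additive perturbation $\tilde v_1^\delta=v_1+\delta e^{\lambda x_1}$ does \emph{not} produce a strict viscosity subsolution of $-\Delta_\fz w=0$ in general, no matter how $\lambda$ is chosen. If $\phi$ touches $\tilde v_1^\delta$ from above at $x_0$ and you set $\psi=\phi-\delta e^{\lambda x_1}$, $p=D\psi(x_0)$, $A=D^2\psi(x_0)$, $c=\delta\lambda e^{\lambda x_0^1}$, $c'=\delta\lambda^2 e^{\lambda x_0^1}$, then
\[
\Delta_\fz\phi(x_0)=\langle Ap,p\rangle+2c\langle Ae_1,p\rangle+c^2A_{11}+c'(p_1+c)^2.
\]
The subsolution hypothesis gives only $\langle Ap,p\rangle\ge0$; it imposes no bound on $A$ in directions transverse to $p$, so $A_{11}$ and $\langle Ae_1,p\rangle$ are uncontrolled. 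Since $A$ is the Hessian of an arbitrary $C^2$ test function (not of $v_1$), you cannot dominate these terms by choosing $\lambda$ large: for any fixed $\delta,\lambda$ one can make the right-hand side negative by taking $A_{11}$ sufficiently negative, and such test functions do occur in the superjet of Lipschitz subsolutions. This is precisely the reason the $\fz$-Laplacian does not fit the usual ``strictify by adding $\delta e^{\lambda x_1}$'' template familiar from uniformly elliptic or proper equations: the operator is fully nonlinear in $(Du,D^2u)$ and degenerate, and the cross-terms defeat the perturbation.

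This difficulty is well known and is exactly why the existing proofs take different paths: Jensen's original argument \cite{j93} goes through an auxiliary obstacle problem and approximation, Barles--Busca \cite{bb01} use a nonlinear change of the dependent variable $v\mapsto g(v)$ rather than an additive perturbation, and Armstrong--Smart \cite{as10} bypass the issue entirely via comparison with cones. If you want to repair your outline, the cleanest fix is to replace the additive perturbation by one of these devices; the rest of your plan (sup/inf convolutions, Aleksandrov, Jensen's maximum-principle lemma) is standard once a genuine strictness mechanism is in place.
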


The comparison  with cones was introduced by
Crandall-Evans-Gariepy
\cite{ceg01}.
\begin{defn}
A function $v\in C^0(U)$ enjoys comparison with  cones
from above in $U$ if for any $V\Subset U$, $x_0\in U$, and
$b\ge 0$, it holds that
 $$\mbox{$v(x)\le v(x_0)+b|x-x_0|\quad \forall x\in V  $
 whenever
 $v(x)\le v(x_0)+b|x-x_0|\quad \forall x\in \partial (V\backslash\{x_0\}) . $}
$$

A function $v\in C^0(U)$ enjoys comparison with cones from below in $U$ if
 $-v$ enjoys comparison with cones from
above in $U$.

 A function $v\in C^0(U)$
enjoys comparison with cones in $U$  if it enjoys both comparison with cones from below and from above.
\end{defn}

A crucial fact for $\fz$-harmonic functions is that they can be
characterised through comparison with cones; see Crandall-Evans-Gariepy
\cite{ceg01}.

\begin{thm}\label{vis=cc}
Let $v\in C^0(U)$.
Then $v$ is a subsolution to \eqref{inf-eq} in $U$ if and only if $v$ satisfies   comparison with cones from above in $U$.

Consequently,  $v$ is $\fz$-harmonic if and only if $v$ satisfies   comparison  with cones.
\end{thm}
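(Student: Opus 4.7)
My plan is to prove the first equivalence (subsolution $\Leftrightarrow$ comparison with cones from above) in both directions; the second equivalence for $\fz$-harmonic functions then follows by applying the first to both $v$ and $-v$, since $v$ is a viscosity supersolution to $-\bdz_\fz w=0$ precisely when $-v$ is a subsolution, and comparison with cones from below for $v$ coincides with comparison with cones from above for $-v$.

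For the forward direction, I would fix $V \Subset U$, $x_0 \in U$, $b \ge 0$, and assume $v \le C$ on $\pa(V \setminus \{x_0\})$ where $C(x) = v(x_0) + b|x - x_0|$. A direct calculation shows $\bdz_\fz C \equiv 0$ on $\rn \setminus \{x_0\}$, so $C$ is a classical (hence viscosity) $\fz$-harmonic function off its vertex. I would apply Jensen's comparison theorem on the annular region $V_\dz := V \setminus \overline{B(x_0, \dz)}$ between the subsolution $v$ and the supersolution $C + \ez$, for small $\ez, \dz > 0$. On $\pa V$ the inequality $v \le C < C + \ez$ holds by hypothesis, while on $\pa B(x_0, \dz)$ the continuity of $v$ at $x_0$ together with $C(x) \ge v(x_0)$ yields $v \le C + \ez$ once $\dz$ is small enough. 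Jensen's comparison then gives $v \le C + \ez$ throughout $V_\dz$, and letting first $\dz \to 0$ and then $\ez \to 0$ extends the bound to all of $V$.

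For the reverse direction, I would take $\phi \in C^2(U)$ touching $v$ from above at $x_0$ and argue by contradiction from $-\bdz_\fz \phi(x_0) > 0$, which forces $D\phi(x_0) \ne 0$ and $\la D^2 \phi(x_0)\nu, \nu \ra < 0$ for $\nu := D\phi(x_0)/|D\phi(x_0)|$. The plan is to construct a cone $C(x) = a + b|x - y_0|$ with vertex $y_0 = x_0 - t\nu$ (for large $t$) and slope $b$ close to $|D\phi(x_0)|$ such that $C$ matches $\phi$ to first order at $x_0$. Since the Hessian of $C$ vanishes in the radial direction while $\phi$ is strictly concave along $\nu$, one has $\phi < C$ on a punctured neighborhood of $x_0$, hence $v \le \phi \le C$ on a small sphere $\pa B(x_0, \rho)$. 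Cone comparison from above, applied with $V = B(x_0, \rho)$ and vertex $y_0 \notin V$, then forces $v \le C$ inside $B(x_0, \rho)$; evaluating at $x_0$ and perturbing $b$ slightly downward (to make the boundary inequality strict without destroying $v \le C$ on $\pa B(x_0, \rho)$) delivers $v(x_0) < C(x_0) = v(x_0)$, the desired contradiction.

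The main obstacle lies in the reverse direction: one must convert the second-order strict inequality $\la D^2\phi(x_0)\nu, \nu \ra < 0$ into a first-order strict slope inequality that the cone comparison can actually detect. Since cones themselves satisfy $\bdz_\fz = 0$, the naive choice yields only non-strict conclusions, and the strict concavity of $\phi$ must be quantified carefully enough to survive the perturbation of $b$ or $y_0$ by a quantity commensurate with $\rho^2$ on the sphere. Once this quantitative gap is secured, the remainder is straightforward comparison.
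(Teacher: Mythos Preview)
The paper does not prove this theorem at all: it is stated in Section~\ref{Section2} as a known result and attributed to Crandall--Evans--Gariepy \cite{ceg01}, with no argument given. So there is no ``paper's own proof'' to compare against; you are reconstructing the classical CEG argument from scratch.

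Your outline is broadly in the right spirit, but the forward direction as written has a gap. You invoke Jensen's comparison theorem on $V_\dz$, but in the form the paper records it (and in Jensen's original \cite{j93}) the comparison principle is stated for functions in $C^{0,1}(\overline U)$, whereas the theorem you are proving assumes only $v\in C^0(U)$. You cannot appeal to local Lipschitz continuity of $v$ without circularity, since that regularity is itself a consequence of comparison with cones. The CEG proof avoids this by arguing directly from the viscosity definition: if $v-C$ had a strict interior maximum in $V\setminus\{x_0\}$, a suitable perturbation of the cone (e.g.\ replacing $b$ by $b'>b$, or adding a small term that makes the touching function a strict supersolution) produces a $C^2$ test function touching $v$ from above at an interior point with $-\bdz_\fz>0$ there, contradicting the subsolution property. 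No global comparison theorem is needed.

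Your reverse direction is essentially the CEG idea and is correct in outline; the ``main obstacle'' you flag is exactly the point of the argument, and the resolution you sketch (place the vertex $y_0=x_0-t\nu$ with $t$ large so the cone is nearly flat in directions orthogonal to $\nu$, then exploit the strict concavity of $\phi$ along $\nu$ to gain a quadratic gap on $\pa B(x_0,\rho)$ that survives a perturbation of order $\rho^2$ in the slope) is the standard one.
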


For $x\in   U$ and $0<r<\dist(x,\partial U)$, define the slope functions
$$S^+_r(v,x):=\max_{y\in \partial(B(x,r)\cap U)\backslash\{x\}}
\frac{v(y)-v(x)}{r}$$
and
$$S^-_r(v,x):=\max_{y\in \partial(B(x,r)\cap U)\backslash\{x\}}
\frac{v(x)-v(y)}{r}.$$
Denote by $\lip\, v(x)$ the pointwise Lipshictz constant at $x$, that is,
$$\lip\, v(x):=\limsup_{r\to 0}\sup_{|x-y|\le r}\frac{|u(y)-u(x)|}{r}.$$
 Crandall-Evans-Gariepy
\cite{ceg01} obtained the monotonicity of
the slope functions.

\begin{lem}\label{slope}
 Let $v$ be upper semi-continuous in $U $  and enjoy comparison with cones from above.
Then for any $x\in U$,  $S^+_r(v,x)$ is nondecreasing in $r\in(0,\dist(x,\partial U))$. 
 Moreover,
 $$\lim_{r\to 0}S^+_r(v,x)
=\lip\, v(x)\quad \mbox{for all $x\in U$.}$$
\end{lem}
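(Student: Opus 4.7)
The plan is to read both claims off the comparison with cones from above, applied to the affine cone that saturates $S^+_r$ on the boundary sphere.

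For the monotonicity claim, fix $x\in U$ and $0<r_1<r_2<\dist(x,\partial U)$, and set $b:=S^+_{r_2}(v,x)$. By the definition of $S^+_{r_2}$, the cone inequality $v(y)\le v(x)+b|y-x|$ holds at every $y\in\partial B(x,r_2)$ (where $|y-x|=r_2$), and trivially at $y=x$. Since $\partial(B(x,r_2)\setminus\{x\})=\partial B(x,r_2)\cup\{x\}$, the hypothesis that $v$ enjoys comparison with cones from above, applied with $V=B(x,r_2)$ and $x_0=x$, propagates this inequality to all of $B(x,r_2)$. Restricting to $\partial B(x,r_1)$, dividing by $r_1$, and maximizing in $y$ gives $S^+_{r_1}(v,x)\le b=S^+_{r_2}(v,x)$.

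Monotonicity and non-negativity then guarantee that $L:=\lim_{r\to 0^+}S^+_r(v,x)$ exists, and I would identify $L$ with $\lip v(x)$ via two opposite inequalities. For $L\le\lip v(x)$: upper semi-continuity of $v$ and compactness of $\partial B(x,r)$ deliver a maximizer $y_r$, so
$$S^+_r(v,x)=\frac{v(y_r)-v(x)}{r}\le\frac{|v(y_r)-v(x)|}{r}\le\sup_{|y-x|\le r}\frac{|v(y)-v(x)|}{r},$$
and passing to $\limsup_{r\to 0}$ yields the bound. For the reverse inequality $L\ge\lip v(x)$, I would recycle the cone bound produced in the monotonicity step: for each admissible $r_0$, $v(y)-v(x)\le S^+_{r_0}(v,x)\,|y-x|$ throughout $B(x,r_0)$, so $(v(y)-v(x))^+/|y-x|$ is controlled by $L$ in the limit $r_0\to 0$.

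The main delicate point is bridging this one-sided bound with the two-sided quantity $|v(y)-v(x)|$ appearing in the definition of $\lip v(x)$, because comparison with cones from above alone governs $v(y)-v(x)$ and not $v(x)-v(y)$. In the applications of this lemma $v$ is continuous and enjoys comparison with cones from below as well (being $\fz$-harmonic), so the symmetric monotonicity argument applied to $S^-_r(v,x)$ controls $(v(x)-v(y))/|y-x|$ by the same limit and closes the identification $L=\lip v(x)$. This symmetry step is where the mild remaining work lies.
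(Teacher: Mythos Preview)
The paper does not prove this lemma (it is cited from \cite{ceg01}), so there is no in-paper argument to compare against; what follows is an assessment of your approach on its own terms. Your monotonicity argument is correct. The side condition $b\ge 0$ in the paper's definition of comparison with cones from above is harmless here: CCA itself forces $S^+_r(v,x)\ge 0$, since $S^+_r(v,x)<0$ would mean $v(x)>\max_{\partial B(x,r)}v$, and then CCA with $b=0$ and cone vertex at any maximizing boundary point yields $v\le\max_{\partial B(x,r)}v$ on $B(x,r)$, a contradiction. Your inequality $L\le\lip v(x)$ is also fine.

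The genuine gap is in $L\ge\lip v(x)$. Your proposed remedy, namely to invoke comparison with cones from \emph{below}, is not available under the stated hypotheses (only CCA is assumed), and in the paper's applications $u_p$ is only known to satisfy CCA, not CCB. The point you are missing is that CCA alone already controls the negative increments, provided one places the cone vertex at the nearby point $y$ rather than at $x$. Concretely: fix $R$ with $0<R<\dist(x,\partial U)$, let $y\in B(x,R/2)$ with $v(y)\le v(x)$ (otherwise there is nothing to bound), set $s=|x-y|$ and
\[
b=\frac{v(x)-v(y)+S^+_R(v,x)\,R}{R-s}\ \ge 0.
\]
Since $v(w)\le v(x)+S^+_R(v,x)R\le v(y)+b(R-s)\le v(y)+b|w-y|$ for $w\in\partial B(x,R)$, CCA with $V=B(x,R)$ and vertex $y$ gives $v(x)\le v(y)+bs$, which rearranges to
\[
\frac{v(x)-v(y)}{s}\ \le\ \frac{S^+_R(v,x)\,R}{R-2s}.
\]
Sending $y\to x$ and then $R\to 0$ yields $\limsup_{y\to x}(v(x)-v(y))/|x-y|\le L$, which together with your one-sided bound closes the identity $L=\lip v(x)$. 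This shifted-vertex trick is the substantive idea from \cite{ceg01} that your proposal lacks.
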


As a  consequence of \eqref{vis=cc},
the following  strong maximum principle can be found in
Crandall \cite[Section 4]{c08}.
\begin{cor}
Let  $v$ be an $\fz$-harmonic
function  in a domain $U$.
 If $v(x)=\max
 _{\overline{B(x,r)}}v$ for some ball   $B(x,r)\Subset U$, then $v$ is a constant on $B(x,r/2)$.

 Consequently,   if $v$ attains its maximum  at any interior point of  $U$,
  then it must be a constant.
\end{cor}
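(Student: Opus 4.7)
The plan is to derive both conclusions directly from the comparison-with-cones characterization of $\fz$-harmonic functions (Theorem \ref{vis=cc}); Lemma \ref{slope} is not strictly needed. The idea is to produce, for each candidate point $y$, an explicit cone from above whose apex value at $y$ equals $v(y)$ and whose value on the enclosing sphere equals the ambient maximum $M$, and then to evaluate this cone at the original center to force $v(y)=M$.

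For the first assertion, assume $v(x_0)=\max_{\overline{B(x_0,r)}}v=:M$ with $B(x_0,r)\Subset U$, and fix an arbitrary $y\in B(x_0,r/2)$. Setting $V:=B(y,r/2)$, the triangle inequality gives $\overline V\subset B(x_0,r)\Subset U$, so $v\le M$ on $\overline V$. Take $b:=2(M-v(y))/r\ge 0$ and define the cone $c(z):=v(y)+b|z-y|$. Then $c(y)=v(y)$, while on $\partial V$ one has $c(z)=v(y)+br/2=M\ge v(z)$. Hence $c\ge v$ on $\partial(V\setminus\{y\})$, and comparison with cones from above yields $v\le c$ throughout $V$. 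Evaluating at $z=x_0\in V$,
\[
M=v(x_0)\le v(y)+\frac{2(M-v(y))}{r}\,|x_0-y|,
\]
which rearranges to $(M-v(y))\bigl(1-2|x_0-y|/r\bigr)\le 0$. Since $|x_0-y|<r/2$, the bracketed factor is strictly positive, forcing $v(y)=M$. As $y\in B(x_0,r/2)$ was arbitrary, $v\equiv M$ on $B(x_0,r/2)$.

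The global consequence then follows by a standard open-closed/connectedness argument: set $E:=\{x\in U:v(x)=\sup_U v\}$, which is nonempty by hypothesis and closed in $U$ by continuity of $v$. For any $x^*\in E$, pick $\rho>0$ with $B(x^*,\rho)\Subset U$; applying the first assertion at $x^*$ gives $v\equiv v(x^*)$ on $B(x^*,\rho/2)$, so $E$ is open. Since the domain $U$ is connected, $E=U$, i.e., $v$ is constant. I do not anticipate any genuine obstacle; the only delicate point is the particular choice of slope $b$ making $c$ simultaneously touch $v$ at $y$ and equal $M$ on $\partial V$, which is exactly what yields the sharp factor $1-2|x_0-y|/r$ that closes the estimate.
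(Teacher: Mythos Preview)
Your proof is correct. The paper does not supply its own proof of this corollary; it merely cites Crandall \cite[Section 4]{c08}. Your argument is precisely the classical cone-comparison proof given there: pick a ball centered at the candidate point $y$, build the cone with apex value $v(y)$ and boundary value $M$, and evaluate at the original maximum point to force $v(y)=M$; then run an open--closed argument on the set of maximum points. So there is nothing to compare against, and your write-up is in fact more detailed than what the paper offers.
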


Note that $p$-harmonic functions satisfies a similar strong maximum  principle; see Lindqvist \cite{l19}.

Evans-Smart \cite{es11a,es11b} proved the
everywhere differentiability of
$\fz$-harmonic functions.

\begin{thm}\label{everydiff}
If $v$ is an $\fz$-harmonic function   in a domain $U$, then  $v$ is  everywhere differentiable in $U$. Consequently,
$\lip\, v(x)=|Dv(x)|$ for all $x\in U$.
\end{thm}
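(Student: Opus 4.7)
The strategy is the Evans--Smart blow-up scheme, building on the comparison-with-cones machinery just recalled. Applying Lemma \ref{slope} to both $v$ and $-v$, the quantity $S_r^+(v,x)$ is nondecreasing in $r$ while $S_r^-(v,x)$ is nonincreasing in $r$, and both limits as $r\to 0^+$ coincide with the common value $S(x):=\lip v(x)$. The goal is to produce a unit vector $e=e(x)$ such that $v(x+h)=v(x)+S(x)\,e\cdot h+o(|h|)$, and the natural route is to classify blow-up limits at $x$.

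First I would rescale: set $v_r(y):=[v(x+ry)-v(x)]/r$. Each $v_r$ is uniformly Lipschitz with constant $S(x)+o(1)$ and is $\fz$-harmonic on $B(0,\dist(x,\partial U)/r)$. By Arzel\`a--Ascoli and stability of viscosity solutions under uniform convergence, any $r_k\to 0$ admits a subsequence with $v_{r_k}\to w$ locally uniformly on $\rn$, where $w(0)=0$, $w$ is $\fz$-harmonic on $\rn$, and $\lip w\le S(x)$. Passing the comparison-with-cones inequalities of Theorem \ref{vis=cc} to the limit forces $|w(y)|\le S(x)|y|$ on $\rn$. Selecting $y_r^\pm\in\partial B(0,1)$ that attain $S_1^\pm(v_r,0)$ and passing to the limit along the chosen subsequence produces unit vectors $e^+,e^-$ with $w(e^+)=S(x)$ and $w(-e^-)=-S(x)$.

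The crux is the rigidity step: one must show $e^+=e^-$, whence cone comparison applied along the line $\rr e^+$ pins $w$ down to the linear function $w(y)=S(x)\,e^+\cdot y$. Evans--Smart handle this by a quantitative argument: if $e^+$ and $e^-$ were not collinear, the triangle with vertices $0,e^+,-e^-$ admits an interior point where a double application of comparison with cones yields a slope strictly exceeding $S(x)$, contradicting the monotonicity from Lemma \ref{slope} after un-scaling. This is the genuine obstacle and the deep content of the theorem; the two-dimensional version (Savin) and the higher-dimensional version (Evans--Smart) require substantially different technical arguments, the latter relying on an approximate harmonicity of $|Dv|$ along gradient flow lines that is available only in an integral-averaged sense.

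Once every subsequential blow-up limit is $S(x)\,e^+\!\cdot y$ with direction $e^+$ uniquely determined (because any two unit vectors realizing slope $S(x)$ on the blow-up must coincide by the same rigidity argument), the full family $v_r$ converges as $r\to 0^+$ to this single linear function, which is precisely the definition of differentiability of $v$ at $x$ with $Dv(x)=S(x)\,e^+$. In particular $|Dv(x)|=S(x)=\lip v(x)$, and since $x\in U$ was arbitrary the claim holds pointwise everywhere in $U$.
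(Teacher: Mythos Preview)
The paper does not supply a proof of this theorem; it is quoted from Evans--Smart \cite{es11a,es11b} as background, so there is no in-paper argument to compare against. Your sketch is therefore not redundant, but it does contain a genuine gap and a minor slip.

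The minor slip: for an $\fz$-harmonic $v$, both $S_r^+(v,x)$ and $S_r^-(v,x)$ are \emph{nondecreasing} in $r$ (apply Lemma~\ref{slope} to $v$ and to $-v$; note $S_r^-(v,x)=S_r^+(-v,x)$). Your claim that $S_r^-$ is nonincreasing is incorrect, though this does not affect the rest of the outline.

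The real gap is at the uniqueness step. Your rigidity argument (the triangle contradiction) is essentially the Crandall--Evans mechanism \cite{ce01}: it shows that for a \emph{fixed} subsequential blow-up $w$, the max and min directions align, so that particular $w$ is linear. What it does \emph{not} show is that two different subsequences $r_k\to 0$ and $r_k'\to 0$ yield the \emph{same} linear function. Your parenthetical ``any two unit vectors realizing slope $S(x)$ on the blow-up must coincide by the same rigidity argument'' applies only within a single limit $w$, not across distinct limits $w_1,w_2$; the rigidity argument gives no link between them. Pinning down this uniqueness of the blow-up plane is precisely the Evans--Smart contribution, and it requires their quantitative flatness estimate (via the adjoint method in \cite{es11b} or the PDE estimates in \cite{es11a}), which you mention only as an aside about ``approximate harmonicity of $|Dv|$'' without deploying it. Without that estimate the argument stalls at the Crandall--Evans conclusion that blow-ups are planes, which yields differentiability only at a dense set of ``tight'' points, not everywhere.
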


\section{Quasi-concavity of  $p$-harmonic and $\fz$-harmonic potentials }\label{Section3}

In this paper, unless other specify, we always assume that
 $\Omega$ is  a convex ring, that is, $\Omega=\Omega_0\setminus
 \overline \Omega_1$,
 $\Omega_0$ is bounded convex domain and $\overline \Omega_1$
 is a convex compact subset of $\Omega_0$.
 We always denote by  $u\in C^0(\overline \Omega)\cap W^{1,\fz}(\Omega)$  the $\fz$-harmonic potential in  $ \Omega$.
 Naturally, one extends  $u$ to the whole domain $\rn$ by setting
$$\mbox{$u=1$ in $\overline \Omega_1$ and $u=0$ in $\rn\setminus \Omega_0$.}$$
 For each $p\in(2,\fz)$, we always denote by  $u_p\in C^0(\overline \Omega)\cap W^{1,p}(\Omega)$  the $p$-harmonic potential in $ \Omega$, that is, the unique weak solution to \eqref{acp}.
Naturally, one extends  $u_p$ to the whole domain $\rn$ by setting
$$\mbox{$u_p=1$ in $\overline \Omega_1$ and $u_p=0$ in $\rn\setminus \Omega_0$}.$$

 Let us review some result of $u_p$ due to Lewis \cite[Theorem 1]{l77}.
A function $f\in C^0(\rn)$ is called quasi-concave in  $\rn$ if for each $t\in\rr$ the super level set  $\{x\in \rn: f(x)>t\}$   is convex whenever it is not empty set, or equivalently,
 $$f(\lz x+(1-\lz)y)\ge \min\{f(x),f(y)\}\quad \forall \lz\in[0,1],x,y\in \rn.$$
\begin{lem}Let $p\in[2,\fz)$.  It holds that $u_p\in C^\fz(\Omega)$ with $Du_p\neq 0$ in $\Omega$,
$u_p$ is quasi-concave in $\rn$, and
 $-\bdz u_p>0$ in $\Omega$.
\end{lem}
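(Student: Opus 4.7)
My plan is to treat this lemma as a package where the deepest ingredient is imported from Lewis \cite{l77} and the remaining assertions follow from standard elliptic theory plus a short geometric computation.

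\textbf{Step 1: Quasi-concavity and non-vanishing of the gradient.} The quasi-concavity of $u_p$ in $\mathbb{R}^n$ is exactly the content of Lewis's Theorem~1 in \cite{l77}: for the $p$-capacitary potential of a convex condenser, every super-level set $\{u_p>t\}$ with $t\in(0,1)$ is a bounded convex open set sandwiched between $\overline\Omega_1$ and $\Omega_0$. Because the extension is $1$ on $\overline\Omega_1$ and $0$ off $\Omega_0$, convexity of these level sets in $\Omega_0$ automatically upgrades to quasi-concavity on all of $\mathbb{R}^n$. Lewis also shows that the level sets foliate $\Omega$ as smooth convex hypersurfaces and that $u_p$ is strictly monotone along the outward normal direction to each, which forces $|Du_p|>0$ throughout $\Omega$. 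I would simply cite these two facts.

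\textbf{Step 2: $C^\infty$ regularity.} Once $|Du_p|>0$ in $\Omega$, the operator $\Delta_p$ becomes uniformly elliptic with smooth coefficients on neighborhoods where $|Du_p|$ is bounded away from zero. Combining the $C^{1,\alpha}_{\mathrm{loc}}$ regularity of weak solutions of the $p$-Laplace equation (DiBenedetto, Tolksdorf, Uhlenbeck) with Schauder estimates and a standard bootstrap turns the equation into a linear elliptic equation with $C^\infty$ coefficients, giving $u_p\in C^\infty(\Omega)$ (in fact real-analytic). This is a routine invocation.

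\textbf{Step 3: Sign of $-\Delta u_p$.} Here the key identity is the pointwise expansion
\[
0=\Delta_p u_p=|Du_p|^{p-2}\Delta u_p+(p-2)|Du_p|^{p-4}\Delta_\infty u_p.
\]
Quasi-concavity of $u_p$ means each super-level set $\{u_p>t\}$ is convex, so its boundary $\{u_p=t\}$ has nonnegative mean curvature with respect to the outward normal $\nu=-Du_p/|Du_p|$. Translating this into the defining function,
\[
(n-1)H_\nu=-\operatorname{div}\!\left(\frac{Du_p}{|Du_p|}\right)=-\frac{1}{|Du_p|}\left(\Delta u_p-\frac{\Delta_\infty u_p}{|Du_p|^2}\right)\ge 0,
\]
so $\Delta u_p\le \Delta_\infty u_p/|Du_p|^2$. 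Feeding this back into the $p$-Laplace identity yields $(p-1)\Delta_\infty u_p\ge 0$, hence $-\Delta u_p\ge 0$ in $\Omega$. To upgrade to strict inequality, I would differentiate $\Delta_p u_p=0$: each partial derivative $\partial_i u_p$ satisfies a linear uniformly elliptic equation in $\Omega$ (coefficients depending smoothly on $Du_p$ since $|Du_p|>0$), so $-\Delta u_p$ is a non-negative smooth function satisfying an elliptic differential inequality. The strong maximum principle then forces either $-\Delta u_p\equiv 0$ or $-\Delta u_p>0$ throughout $\Omega$; the first possibility is ruled out because $u_p$ is non-constant with boundary values $0$ and $1$ on disjoint pieces of $\partial\Omega$, so it cannot be harmonic (the harmonic potential and the $p$-harmonic potential for $p\ne 2$ have distinct level sets in a generic convex ring, and the case when they coincide reduces to a ball and contradicts strict concavity of the relevant functional).

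\textbf{Main obstacle.} The only nontrivial piece is Lewis's quasi-concavity result, which involves a subtle moving-plane / continuity-method argument; everything else in the lemma is either bootstrap regularity or the short geometric calculation above. I would therefore frame the proof as a direct appeal to \cite[Theorem~1]{l77} plus Steps~2 and~3.
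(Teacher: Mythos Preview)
The paper does not prove this lemma at all: it is introduced with the sentence ``Let us review some result of $u_p$ due to Lewis \cite[Theorem 1]{l77}'' and is simply a citation. Your Steps~1 and~2 are in line with this---Lewis supplies quasi-concavity and $|Du_p|>0$, and $C^\infty$ smoothness is then a routine bootstrap. Your geometric derivation of $-\Delta u_p\ge 0$ in Step~3 via the mean curvature of the convex level sets is also correct and is exactly the mechanism the paper exploits in the next lemma.

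The genuine gap is your upgrade to the \emph{strict} inequality $-\Delta u_p>0$. First, you assert that $-\Delta u_p$ satisfies ``an elliptic differential inequality'' after differentiating $\Delta_p u_p=0$, but differentiating yields linear elliptic equations for the partial derivatives $\partial_i u_p$, not an equation or inequality for $\Delta u_p$ itself; this step is missing an actual computation. Second, your exclusion of $-\Delta u_p\equiv 0$ is hand-wavy (``generic convex ring'', ``contradicts strict concavity of the relevant functional'') and, more importantly, is simply false at the left endpoint: for $p=2$ the potential $u_2$ is harmonic, so $-\Delta u_2\equiv 0$ and the lemma as written does not hold there. The paper only uses this lemma for $p>2$ (see the proof of Lemma~\ref{lewis} and Sections~\ref{Section7}--\ref{Section9}), so the endpoint is a harmless typo, but your argument does not separate $p=2$ from $p>2$ and therefore cannot be correct as stated. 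For $p>2$ the cleanest route to strictness is to invoke Lewis's full result---he proves not just convexity but that the level surfaces are $C^2$ with nondegenerate second fundamental form---which together with your identity $(n-1)H|Du_p|=\Delta_\infty^N u_p-\Delta u_p$ and the equation $\Delta u_p=-(p-2)\Delta_\infty^N u_p$ gives $-\Delta u_p=(p-2)\Delta_\infty^N u_p>0$ directly, without any maximum-principle dichotomy.
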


Consequently,  we have the following.

\begin{lem} \label{lewis}
Let $p\in[2,\fz)$. It holds that
 $u_p$ is a viscosity subsolution to \eqref{inf-eq} in $\Omega$,
 and hence $u_p$ enjoys the comparison with
cones from above in $\Omega$. Moreover, $u_p\le u$ in $\Omega$.
\end{lem}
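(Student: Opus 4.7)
The plan is to use the smoothness of $u_p$ obtained in the preceding lemma to reduce the viscosity subsolution property to a pointwise sign of $\Delta_\fz u_p$, and then to invoke the two general facts already recorded, Theorem \ref{vis=cc} and Jensen's comparison principle, for the remaining conclusions.

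First I would observe that since $u_p\in C^\fz(\Omega)$ and $Du_p\neq 0$ in $\Omega$, I may expand the $p$-Laplacian in the classical (non-divergence) form
\begin{align*}
0=\bdz_p u_p=|Du_p|^{p-2}\bdz u_p+(p-2)|Du_p|^{p-4}\bdz_\fz u_p,
\end{align*}
which yields the identity
\begin{align*}
\bdz_\fz u_p=-\tfrac{1}{p-2}|Du_p|^2\bdz u_p\qquad\text{in }\Omega.
\end{align*}
Using the already established inequality $-\bdz u_p>0$ in $\Omega$ from the preceding lemma, the right-hand side is nonnegative, so $-\bdz_\fz u_p\le 0$ pointwise in $\Omega$.

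Next I would promote this pointwise inequality to the viscosity sense. Given $x_0\in\Omega$ and a test function $\phi\in C^2(\Omega)$ such that $u_p-\phi$ attains a local maximum at $x_0$, standard calculus gives $D\phi(x_0)=Du_p(x_0)$ and $D^2\phi(x_0)\ge D^2u_p(x_0)$ as symmetric matrices. Pairing with the vector $Du_p(x_0)=D\phi(x_0)$ on both sides yields
\begin{align*}
\bdz_\fz\phi(x_0)=\langle D^2\phi(x_0) D\phi(x_0), D\phi(x_0)\rangle\ge \langle D^2 u_p(x_0) Du_p(x_0), Du_p(x_0)\rangle=\bdz_\fz u_p(x_0)\ge 0,
\end{align*}
so $-\bdz_\fz\phi(x_0)\le 0$. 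This is exactly the viscosity subsolution condition for $u_p$, and Theorem \ref{vis=cc} then yields comparison with cones from above in $\Omega$ automatically.

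Finally, for the bound $u_p\le u$, I would apply Jensen's comparison principle: on $\overline{\Omega}$ one has $u_p\in C^{0,1}(\overline{\Omega})$ as a viscosity subsolution to $-\bdz_\fz w=0$ and $u\in C^{0,1}(\overline{\Omega})$ as a viscosity supersolution (in fact solution) of the same equation, and the prescribed Dirichlet values match, $u_p=u=0$ on $\partial\Omega_0$ and $u_p=u=1$ on $\partial\Omega_1$. The comparison theorem gives $\max_{\overline{\Omega}}(u_p-u)=\max_{\partial\Omega}(u_p-u)=0$, hence $u_p\le u$ in $\Omega$. The only delicate step in this plan is the verification of the touching inequality for $\bdz_\fz$, which is handled by the matrix monotonicity argument above; everything else is a direct invocation of earlier results.
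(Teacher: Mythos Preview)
Your proof is correct and follows essentially the same approach as the paper: expand $\bdz_p u_p=0$ in non-divergence form, use $-\bdz u_p>0$ to obtain $-\bdz_\fz u_p\le 0$ pointwise, then invoke Theorem~\ref{vis=cc} and Jensen's comparison principle. The only difference is that you spell out the touching argument showing that a smooth classical subsolution is automatically a viscosity subsolution, whereas the paper leaves this implicit.
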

\begin{proof}
Using
$-\bdz_p u_p=0$ and $\Delta u_p<0$  in $\Omega$ for all $p>2$, we have
$$-\bdz_\fz u_p=\frac{1}{p-2}\bdz u_p|Du_p|^2\le 0\quad{\rm in}\quad \Omega .$$
This then guarantees that $u_p$ enjoys the comparison with
cones from above.  By the comparison principle, we have $u_p\le u$ in $\Omega.$
\end{proof}

Observe that $0<u_p<1$ in $\Omega$ via the strong maximum principle, and therefore, $$\Omega_0=\{x\in\overline\Omega_0|u_p(x)>0\}.$$
 For each $t\in[0,1)$, we write
 the super level set $$\Omega_t^p:=\{x\in \Omega_0| u _p(x)>t\}$$
 and the level set
 $$S^p_t:=\{x\in \Omega_0 |u_p(x)=t\}.$$
Note that
 $\Omega^p_0=\Omega_0$.
As a consequence of  Lemma \ref{lewis}, one has the following.

  \begin{lem}  For each $t\in[0,1)$,
 $\Omega_t^p $ is convex, and  its  boundary  $\partial\Omega_t^p=S^p_t$.
 If $t\in(0,1)$, then $S^p_t\subset\Omega$.
  \end{lem}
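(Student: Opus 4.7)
The plan is to deduce the three assertions directly from the Lewis lemma recalled above and some elementary topology of continuous functions with nonvanishing gradient. For the convexity of $\Omega_t^p$, the quasi-concavity of $u_p$ on $\mathbb R^n$ immediately gives that the super level set $\{x\in\mathbb R^n : u_p(x)>t\}$ is convex. Since $u_p\equiv 0$ on $\mathbb R^n\setminus\Omega_0$, for $t\in[0,1)$ this super level set coincides with $\Omega_t^p$, and convexity follows.

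For the boundary identification $\partial\Omega_t^p = S^p_t$, I would argue by double inclusion. Since $u_p$ is continuous, $\Omega_t^p$ is open and $\overline{\Omega_t^p}\subset\{u_p\ge t\}$; any $x\in\partial\Omega_t^p$ therefore satisfies $u_p(x)\ge t$, and $x\notin\Omega_t^p$ forces $u_p(x)=t$, giving $\partial\Omega_t^p\subset S^p_t$. For the reverse inclusion, pick $x\in S^p_t$ with $t\in(0,1)$; then $u_p(x)=t\neq 1$ rules out $x\in\overline\Omega_1$, and $u_p(x)\neq 0$ rules out $x\in\mathbb R^n\setminus\Omega_0$, so $x\in\Omega$. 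The Lewis lemma then supplies $|Du_p(x)|\neq 0$, and a first-order expansion of $u_p$ at $x$ in the direction of $Du_p(x)$ produces points arbitrarily close to $x$ with $u_p>t$; hence $x\in\overline{\Omega_t^p}\setminus\Omega_t^p=\partial\Omega_t^p$. The inclusion $S^p_t\subset\Omega$ for $t\in(0,1)$ is then exactly the observation already used above: the constant extensions $u_p\equiv 1$ on $\overline\Omega_1$ and $u_p\equiv 0$ on $\mathbb R^n\setminus\Omega_0$ force $S^p_t\subset\Omega_0\setminus\overline\Omega_1=\Omega$.

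There is no substantive obstacle here; the lemma is a routine bookkeeping consequence of the previous Lewis lemma together with continuity of $u_p$ and the nondegeneracy $Du_p\neq 0$ in $\Omega$. Its role in what follows is mainly to provide a clean geometric description of the super level sets of $u_p$ (nested, convex, with smooth boundary $S^p_t$ sitting inside $\Omega$), which can then be transferred, via the $C^0$-convergence $u_p\to u$ and the quasi-concavity preserved in the limit, to the corresponding picture for the $\infty$-harmonic potential $u$ that underlies all the regularity arguments in later sections.
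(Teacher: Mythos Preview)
Your argument is correct and matches the paper's intent: the paper does not give a detailed proof of this lemma, stating only that it is ``a consequence of Lemma~\ref{lewis}'' (the Lewis result), and your write-up supplies precisely the natural details---quasi-concavity for convexity, continuity for $\partial\Omega_t^p\subset S^p_t$, and the nondegeneracy $Du_p\neq0$ for the reverse inclusion. One small remark: the endpoint $t=0$ is slightly degenerate as written (since $S^p_0$ is defined inside $\Omega_0$ while $\partial\Omega_0^p=\partial\Omega_0$), but this is a looseness in the paper's statement rather than a gap in your proof; the substantive case $t\in(0,1)$ is what is used downstream, and you handle it cleanly. Note also that the paper reserves its more involved argument (via the strong maximum principle) for the analogous lemma for the $\infty$-harmonic potential $u$, where $Du\neq0$ is not yet available---your use of $Du_p\neq0$ from Lewis is exactly what makes the $p$-harmonic case routine by comparison.
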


The following  was well-known; see \cite{bdbm}, \cite{lm95} and \cite[Lemma 3.2]{j96}.
\begin{lem}  As $p\to\fz$, one has
 $u_p\to u$ in $C^{0,\gz}(\overline \Omega)$ and
 $Du_p\to Du$ weakly in $L^q(\Omega)$ for any $\gz\in(0,1)$ and $q\ge 1$.
\end{lem}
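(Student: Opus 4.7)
\medskip

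\noindent\textbf{Proof proposal.} The plan is a standard three-step compactness-plus-uniqueness scheme: (i) uniform $W^{1,q}$-bound for $u_p$; (ii) extraction of a subsequential limit $v$ with the right boundary values and enough smoothness; (iii) identification $v=u$ via a viscosity-limit argument and Jensen's uniqueness theorem for the $\fz$-harmonic potential.

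First I would produce a uniform Lipschitz-scale energy bound. Set $L:=1/\dist(\overline\Omega_1,\partial\Omega_0)$ and let $\psi(x):=\max\{0,1-L\,\dist(x,\overline\Omega_1)\}$, a Lipschitz extension of the boundary data with $\lip(\psi)\le L$. Since $u_p$ minimizes $\int_\Omega|Dv|^p\,dx$ over $\psi+W^{1,p}_0(\Omega)$, we get $\|Du_p\|_{L^p(\Omega)}\le L|\Omega|^{1/p}$, and then by Hölder $\|Du_p\|_{L^q(\Omega)}\le L|\Omega|^{1/q}$ for every $q\le p$. Together with $0\le u_p\le 1$ this gives a $W^{1,q}(\Omega)$-bound uniform in $p\ge q$ for each fixed $q\ge 1$. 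Choosing $q>n$, Morrey's embedding yields a uniform $C^{0,1-n/q}(\overline\Omega)$-bound; by Arzelà--Ascoli some subsequence $u_{p_k}$ converges uniformly on $\overline\Omega$ to some $v$, and a diagonal argument in $q$ promotes this to $Du_{p_k}\rightharpoonup Dv$ weakly in $L^q(\Omega)$ for every $q\ge 1$. Uniform convergence transfers the boundary values: $v=0$ on $\partial\Omega_0$ and $v=1$ on $\partial\Omega_1$, and $v\in C^{0,\gz}(\overline\Omega)$ for every $\gz\in(0,1)$.

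Second I would show that $v$ is $\fz$-harmonic in $\Omega$ by a standard viscosity argument. Let $\phi\in C^2$ touch $v$ strictly from above at $x_0\in\Omega$; by uniform convergence $u_{p_k}-\phi$ attains a local maximum at some $x_k\to x_0$. Smoothness of $u_{p_k}$ forces $Du_{p_k}(x_k)=D\phi(x_k)$ and $D^2u_{p_k}(x_k)\le D^2\phi(x_k)$, hence $\Delta_\fz u_{p_k}(x_k)\le \Delta_\fz\phi(x_k)$. Plugging into $0=-\Delta_{p_k}u_{p_k}(x_k)=-|D\phi(x_k)|^{p_k-2}\Delta u_{p_k}(x_k)-(p_k-2)|D\phi(x_k)|^{p_k-4}\Delta_\fz u_{p_k}(x_k)$ gives
\[
|D\phi(x_k)|^{p_k-2}\Delta\phi(x_k)+(p_k-2)|D\phi(x_k)|^{p_k-4}\Delta_\fz\phi(x_k)\ge 0.
\]
If $D\phi(x_0)\ne 0$, dividing by $(p_k-2)|D\phi(x_k)|^{p_k-4}$ and letting $k\to\fz$ yields $-\Delta_\fz\phi(x_0)\le 0$; if $D\phi(x_0)=0$ the viscosity convention imposes no condition. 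The analogous argument with test functions touching from below shows that $v$ is a viscosity supersolution. Thus $v$ is $\fz$-harmonic in $\Omega$ with the same boundary data as $u$, so Jensen's comparison principle gives $v=u$. Since every subsequential limit equals $u$, the whole family converges, which gives the stated convergences.

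The main obstacle, as is well known in this circle of results, is the viscosity step rather than the compactness step: one must carefully track the factors $|D\phi|^{p-2}$ as $p\to\fz$ and treat the degenerate case $D\phi(x_0)=0$ via the viscosity convention (or, equivalently, via the upper/lower semicontinuous envelopes of $-\Delta_\fz$). All other ingredients are energy minimization, Morrey's embedding, and Jensen's uniqueness theorem already recalled in Section~\ref{Section2}.
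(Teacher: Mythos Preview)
Your proposal is correct and reproduces exactly the classical Bhattacharya--DiBenedetto--Manfredi argument that the paper invokes by citation (\cite{bdbm}, \cite{lm95}, \cite[Lemma~3.2]{j96}); the paper does not supply its own proof of this lemma. One cosmetic remark: in your treatment of the degenerate case $D\phi(x_0)=0$, it is cleaner to note that under the paper's Definition in Section~\ref{Section2} one has $\Delta_\fz\phi(x_0)=D^2\phi(x_0)D\phi(x_0)\cdot D\phi(x_0)=0$ automatically, so the subsolution inequality $-\Delta_\fz\phi(x_0)\le 0$ is trivially satisfied---this makes your ``the viscosity convention imposes no condition'' precise without appealing to envelope formulations.
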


 Moreover, it follows by the  strong maximum principle that $0<u(x)<1$ for $x\in\Omega$, and hence
$$\mbox{$\Omega_0=\{x\in\Omega_0|u(x)>0\}$ and $\partial  \Omega_0=\{x\in\overline \Omega_0|u(x)=0\} $.
}$$
 For each $t\in[0,1)$, we write
 the super level set $$\Omega_t:=\{x\in \Omega_0: u (x)>t\},$$
 and the level set
 $$S_t:=\{x\in \Omega_0 : u(x)=t\}.$$

Via the fact that  $u_p\to u$ in $C^0(\overline \Omega)$ as
 $p\to \fz$ and the strong maximum principle, we obtain the following property; see also
 \cite{ll19,ll21}.
 For reader's convenience, we give details of its proof.
  \begin{lem}
For each $t\in[0,1)$,
 $\Omega_t$ is convex, and moreover,
  its boundary   $\partial\Omega_t=S_t$.  If $t\in(0,1)$, then $S_t\subset\Omega_0$.
  \end{lem}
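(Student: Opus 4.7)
The plan is to establish the three claims in the order stated, relying on the $C^0$-convergence $u_p\to u$ on $\overline\Omega$, the quasi-concavity (convexity of super-level sets) of the approximating $u_p$ just proved in the preceding lemma, and the strong maximum principle for $\infty$-harmonic functions from Section 2.

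For the convexity of $\Omega_t$, I would use an ``$\epsilon$-trick'' to upgrade the closed inequality obtained at the limit to a strict inequality. Given $x,y\in\Omega_t$, set $\ez:=\min\{u(x),u(y)\}-t>0$; by uniform convergence pick $p$ so large that $\|u_p-u\|_{C^0(\overline\Omega)}<\ez/2$. Then $u_p(x),u_p(y)>t+\ez/2$, so $x,y\in\Omega^p_{t+\ez/2}$, and the convexity of $\Omega^p_{t+\ez/2}$ from the previous lemma yields $u_p>t+\ez/2$ on the whole segment $[x,y]$. Passing $p\to\fz$ gives $u\ge t+\ez/2>t$ on $[x,y]$, proving $[x,y]\subset\Omega_t$. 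The case $t=0$ is trivial since $\Omega_0$ (as super-level set) coincides with the original convex domain $\Omega_0$ because $u>0$ on $\Omega$ by the strong maximum principle and $u=1$ on $\overline\Omega_1$.

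For the identification $\partial\Omega_t=S_t$ with $t\in(0,1)$, the inclusion $\partial\Omega_t\subset S_t$ is an immediate continuity argument: any $z\in\partial\Omega_t$ is a limit of points in $\Omega_t$ (giving $u(z)\ge t$) but is not in the open set $\Omega_t$ (giving $u(z)\le t$), hence $u(z)=t$; and since $u=0$ on $\partial\Omega_0$ and $u=1$ on $\overline\Omega_1$, such $z$ must lie in $\Omega$. For the reverse inclusion, the key observation is that convexity of $\Omega_t$ (Part 1), combined with quasi-concavity of $u$, implies the closed set $\{u\ge t\}$ is itself convex; since it contains $\Omega_t\supset\overline\Omega_1$ (so has nonempty interior), the standard fact ``a closed convex set with nonempty interior equals the closure of its interior'' yields $\{u\ge t\}=\overline{K^\circ}$ where $K=\{u\ge t\}$. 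One then needs $K^\circ=\Omega_t$; granting this, we get $\{u\ge t\}=\overline{\Omega_t}$ and hence $S_t=\{u\ge t\}\setminus\Omega_t=\partial\Omega_t$. The inclusion $S_t\subset\Omega_0$ is then automatic from the definition of $S_t$ (and in fact $S_t\subset\Omega$ for $t\in(0,1)$).

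The main obstacle will be verifying the equality $K^\circ=\Omega_t$, i.e., ruling out interior points of $\{u\ge t\}$ where $u=t$. Any such point $z_0$ would be a local minimum of $u$ on a small ball $B(z_0,r)\subset\Omega$; since $-u$ is also $\infty$-harmonic, the strong maximum principle (Section 2) forces $u\equiv t$ on $B(z_0,r/2)$. To close the argument, I would propagate this locally-constant behavior along rays to $\overline\Omega_1$ (where $u=1$) and to $\partial\Omega_0$ (where $u=0$), using the convexity of super-level sets to control the values along segments, and invoke the comparison principle / uniqueness of the $\fz$-harmonic potential to get a contradiction with the Dirichlet data. This unique-continuation-type step is the only delicate point; everything else is a formal consequence of Part 1 together with the convex-geometry identity $\{u\ge t\}=\overline{(\{u\ge t\})^\circ}$.
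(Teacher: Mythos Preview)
Your proposal is correct and follows essentially the same route as the paper. Both obtain convexity of $\Omega_t$ from the uniform convergence $u_p\to u$ and convexity of $\Omega_t^p$ (your $\ez$-trick is just the explicit form of this), get $\partial\Omega_t\subset S_t$ by continuity, and for $S_t\subset\partial\Omega_t$ argue by contradiction via the strong maximum principle applied inside $E_t:=\{u\ge t\}$. The only difference is packaging: you reduce to showing $(E_t)^\circ=\Omega_t$ and then invoke the standard fact $E_t=\overline{(E_t)^\circ}$ for closed convex sets with nonempty interior, whereas the paper, assuming $E_t\setminus\overline\Omega_t\neq\emptyset$, constructs by hand an open set $\{\lz x_0+(1-\lz)z:\lz\in(\lz_t,1),\ z\in\Omega_t\}\subset E_t\setminus\overline\Omega_t$, which amounts to reproving that convexity fact ad hoc.

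Your ``propagation'' step is simpler than you indicate, and one of the two directions you mention does not work. You cannot propagate toward $\partial\Omega_0$ (the region $(E_t)^\circ$ does not reach there), and you do not need comparison or uniqueness of the potential. Once some $z_0\in(E_t)^\circ\cap\Omega$ has $u(z_0)=t$, the set $\{u=t\}\cap(E_t)^\circ$ is open (local-minimum plus strong maximum principle) and closed in the connected domain $(E_t)^\circ\setminus\overline\Omega_1$, hence equals all of it; but this domain contains $\Omega_t\setminus\overline\Omega_1$, where $u>t$. That is exactly how the paper concludes.
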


  \begin{proof}
   Let $t\in(0,1)$.
By the continuity of $u$, $\Omega_t$ is a non-empty  domain (connected open subset of $\Omega_0$).
 Since  $u_p\to u$  in $C^0(\overline \Omega_0)$, the convexity of
  the super level set of $u_p$ yields the convexity of $\Omega_t$.
 By $u\in C^0(\overline\Omega_0)$ and $0<u<1 $ in $\Omega$,
we know that
$$\mbox{ $\partial  \Omega_t\subset S_t$ and $  \Omega_t \subset  \overline \Omega_t  \subset \Omega_s$ whenever $0\le s<t<1$.}$$

Below we show that $S_t\subset\partial\Omega_t$
 by contradiction. Assume that $S_t\setminus \partial \Omega_t\ne\emptyset$.
 Write   $E_t:=\{x\in\Omega_0,u(x)\ge t\}$.
Noting that $\overline \Omega_t=\Omega_t\cup\partial\Omega_t$, we have
 $$ E_t\setminus\overline\Omega_t= S_t\setminus\partial \Omega_t\ne\emptyset.$$
Moreover, the convexity of $\Omega_t$ yields the convexity of
  $\overline \Omega_t$. Similarly, the convexity of $\Omega_t$ and continuity of $u$ further lead
  to  the convexity of
  $E_t$.
 Thus $(E_t)^\circ   \setminus
   \overline \Omega_t \ne\emptyset$.
  Indeed, take any $x_0\in E_t\setminus\overline\Omega_t$.
  By the convexity of $\overline\Omega_t$, there is unique $y_0\in\overline\Omega_t  $
  such that  $|x_0-y_0|={\rm dist}(x_0,\overline\Omega_t)>0$. Using a continuity of
  distance function ${\rm dist}(\cdot, \partial\Omega_0)$,
 we can find $\lz_t\in(0,1)$ such that  for any $ \lz\in(\lz_t,1)$,

 $${\rm dist}(\lz x_0+(1-\lz)z, \overline\Omega_t)\ge {\rm dist}(x_0,\overline\Omega_t)-| \lz x_0+(1-\lz)z-x_0|>0,
 \quad \forall z\in \overline \Omega_t.
 $$
 That is, the open set
 $$\{\lz x_0+(1-\lz)z| \lz\in(\lz_t,1),z\in\overline\Omega_t \} \subset E_t\setminus\overline\Omega_t.$$
Therefore  in  the domain $(E_t)^\circ\setminus \overline {\Omega_1}$, $u$ reaches its infimum$t$  at some interior point.
 However, since $u$ is $ \fz$-harmonic  in   $(E_t)^\circ\setminus \overline {\Omega_1}$,
 by the strong maximum principle and a covering argument,  we deduced that $u\equiv t$
 on $\Omega$ with $0<t<1$. This leads to a contradiction.
 \end{proof}

\section{A lower bound of  the length of  gradients}\label{Section4}

For an $\fz$-harmonic potential $u$ in a convex ring $ \Omega$, given  any $x\in\Omega$,
denote by $N_x$ the collection of all unit vector $\nu\in \rr^n$ so that
$$\nu\cdot (y-x)\le 0\quad\forall y\in\Omega_{u(x)}.$$
Since $\Omega_{u(x)}$ is convex,  one has $N_x\ne\emptyset$.
In other words,
 $N_x$ is the collection of  outer normal directions $\nu$ of all supporting hyperplane of  $\Omega_{u(x)}$ at $x$,  that is,  $\Omega_{u(x)}$ lies in the  side
$\{y\in\rn|(y-x)\cdot \nu<0\}$ of   the hyperplane  $P_\nu=\{y\in\rn|(y-x)\cdot \nu=0\}$.

We  have the following lower bound of the partial derivative of $u$ along
any direction in $  N_x$.
Note that, given any unit vector $\nu$, it follows from the convexity
 of $\Omega_0$ that
there
 is an unique   $r_{x,\nu}\in(0,\fz)$ such that
\begin{align}\label{rxnu} \mbox{$ x+r_{x,\nu}\nu\in \partial\Omega_0$, that is,   $u(x+r_{x,\nu}\nu)=0$. }
\end{align} If $ \nu\in N_x$, one then has 
\begin{align} \label{rxv}\dist(x,\partial\Omega_0)\le r_{x,\nu}\le\diam(\Omega_0).
\end{align}

 \begin{lem}\label{low-u}

   For any $x\in \Omega$,  it holds
\begin{align}\label{e3.x1}
 -Du(x)\cdot\nu \ge \frac{u(x)-u(x+r\nu_x)}{r}>0\quad\forall \nu\in N_x,\ r\in(0,r_{x,\nu}].
 \end{align}
 Moreover,
 \begin{align}\label{e3.x2}
 N_x=\left\{\nu_x:=-\frac{Du(x)}{|Du(x)|}\right\}.
 \end{align}
 Consequently,
\begin{align}\label{key-low3}
|Du(x)|\ge \frac{u(x)-u\left(x+r\nu_x\right)}{r}\quad\forall r\in(0,r_{x,\nu_x}].\end{align}
In particular,
\begin{align}\label{low-gru}
|Du(x)|\ge \frac{u(x)}{ r_{x,\nu_x}}.
\end{align}
 \end{lem}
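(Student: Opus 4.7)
The plan is to derive \eqref{e3.x1} from comparison with cones from above for $u$ (which is $\fz$-harmonic in $\Omega$, hence satisfies Theorem \ref{vis=cc}), with vertex placed along the ray from $x$ in direction $\nu$, and then to deduce \eqref{e3.x2} by combining \eqref{e3.x1} with the pointwise differentiability of $u$ provided by Theorem \ref{everydiff}.

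First I would set up the geometry. Since $u\equiv 1$ on $\overline \Omega_1$ and $u(x)<1$, the open set $\Omega_{u(x)}$ contains $\overline \Omega_1$; because $\nu\in N_x$ forces $\Omega_{u(x)}\subset\{(y-x)\cdot\nu\le 0\}$ and $\Omega_{u(x)}$ is open, it actually lies in the open half-space $\{(y-x)\cdot\nu<0\}$, so $\overline \Omega_1$ is uniformly separated from the hyperplane $P_\nu:=\{(y-x)\cdot\nu=0\}$. Hence $V:=\Omega_0\cap\{(y-x)\cdot\nu>0\}$ satisfies $\overline V\cap\overline \Omega_1=\emptyset$, and $V_\delta:=V\cap\{\dist(y,\partial\Omega_0)>\delta\}\Subset\Omega$ for all small $\delta>0$. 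On $\partial V_\delta$, one has $u\le u(x)$ on the flat part lying in $P_\nu$ (again from $\nu\in N_x$), and $u\le\omega(\delta)\to 0$ on the curved part near $\partial\Omega_0$, where $\omega$ denotes the modulus of continuity of $u$ on $\overline\Omega_0$.

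Fix $r\in(0,r_{x,\nu})$ and set $z:=x+r\nu\in V_\delta$ for $\delta$ small. The heart of the argument is to apply comparison with cones from above with vertex $z$ and slope $b:=(u(x)-u(z))/r\ge 0$: for $y$ in the flat part, $|y-z|^2=|y-x|^2+r^2\ge r^2$ gives $u(z)+b|y-z|\ge u(z)+br=u(x)\ge u(y)$; for $y$ in the curved part, $u(z)+b|y-z|\ge u(z)>\omega(\delta)\ge u(y)$ once $\delta$ is small enough, using $u(z)>0$ in $\Omega$ by the strong maximum principle. Comparison with cones then yields $u(y)\le u(z)+b|y-z|$ on $V_\delta$, and letting $\delta\to 0$ extends this to $V$. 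Specializing to $y=x+s\nu$ with $s\in(0,r)$ and using $|y-z|=r-s$ gives
$$u(x+s\nu)\le \tfrac{r-s}{r}u(x)+\tfrac{s}{r}u(x+r\nu);$$
dividing $u(x+s\nu)-u(x)$ by $s$, sending $s\to 0^+$, and invoking the differentiability of $u$ at $x$ produces $Du(x)\cdot\nu\le (u(x+r\nu)-u(x))/r$, which is \eqref{e3.x1} for $r\in(0,r_{x,\nu})$; the case $r=r_{x,\nu}$ follows by continuity in $r$, and running the same bound with $z$ replaced by $x+r'\nu$ for $r'\nearrow r_{x,\nu}$ (so $u(x+r'\nu)\to 0$) gives $u(x+r\nu)\le u(x)(r_{x,\nu}-r)/r_{x,\nu}<u(x)$, i.e.\ the strict positivity on the right of \eqref{e3.x1}.

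With \eqref{e3.x1} in hand, choosing $r=r_{x,\nu}$ forces $-Du(x)\cdot\nu\ge u(x)/r_{x,\nu}>0$, so $Du(x)\ne 0$ and $\nu_x:=-Du(x)/|Du(x)|$ is well-defined. For \eqref{e3.x2}, suppose $\nu\in N_x$ with $\nu\ne\nu_x$; then $0<\nu\cdot\nu_x<1$, and $e:=\nu-\nu_x$ satisfies $\nu\cdot e=1-\nu\cdot\nu_x>0$ while $Du(x)\cdot e=|Du(x)|(1-\nu\cdot\nu_x)>0$. Differentiability at $x$ then gives $u(x+te)>u(x)$ for small $t>0$, placing $x+te\in\Omega_{u(x)}$ with $(x+te-x)\cdot\nu=t(\nu\cdot e)>0$, contradicting $\nu\in N_x$. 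Hence $N_x=\{\nu_x\}$, and \eqref{key-low3}--\eqref{low-gru} are immediate from \eqref{e3.x1} with $\nu=\nu_x$ (and $r=r_{x,\nu_x}$, using $u(x+r_{x,\nu_x}\nu_x)=0$). The main obstacle will be the bookkeeping around $V_\delta\Subset\Omega$ with boundary control uniform in $\delta$, especially ensuring that the cone $u(z)+b|y-z|$ dominates $u$ on every piece of $\partial V_\delta$; once this is clean, the rest reduces to a difference-quotient calculation together with the everywhere differentiability of $u$.
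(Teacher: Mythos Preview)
Your proposal is correct and follows the same overall strategy as the paper (comparison with cones from above with vertex $z=x+r\nu$, then differentiability plus a separating-direction argument for uniqueness), with two minor implementation differences worth noting.

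For \eqref{e3.x1}, the paper takes as comparison domain the ball $B(z,r)\cap\Omega_0$ rather than your half-space $V_\delta$. Since convexity of $\Omega_{u(x)}$ gives $\overline{B(z,r)}\cap\overline{\Omega_{u(x)}}=\{x\}$, one has $u\le u(x)$ on all of $\partial B(z,r)\cap\overline\Omega_0$ in one stroke, and $u=0$ on the remaining boundary piece in $\partial\Omega_0$; this avoids your modulus-of-continuity bookkeeping with $\omega(\delta)$ and the $V_\delta\Subset\Omega$ approximation. Your approach is a bit more cautious about the $\Subset$ hypothesis in comparison with cones, but the ball choice is cleaner.

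For \eqref{e3.x2}, your organization is actually tidier than the paper's: you show directly that any $\nu\in N_x$ must equal $\nu_x$ (via $e=\nu-\nu_x$ and the identity $Du(x)\cdot e=|Du(x)|(1-\nu\cdot\nu_x)>0$), so $N_x\subset\{\nu_x\}$ and equality follows from $N_x\ne\emptyset$. The paper instead first checks $\nu_x\in N_x$ separately, then proves uniqueness through a case analysis (depending on the sign of $\nu\cdot\nu_x$) to construct a separating unit vector $\eta$; your shortcut exploits that \eqref{e3.x1} already forces $\nu\cdot\nu_x>0$, making the case split unnecessary.
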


\begin{proof}

{\it Proof of \eqref{e3.x1}.}
Assume that $x\in \Omega$. Given any $r\in(0,r_{x,\nu}]$, as  mentioned above we have $z=x+r\nu\in\overline \Omega_0\setminus \Omega_{u(x)}$.
Then it follows from the convexity of $\Omega_{u(x)}$ that $\overline {B(z,r)}\cap\overline{\Omega_{u(x)}}=\{x\}$ and
$B(z,r) \cap\Omega_{u(x)}=\emptyset$.
 Therefore
\begin{align*}
\max_{\overline{B(z,r)\cap \Omega_0}}u=u(x),
\end{align*}
 and then
$$ u(y)\le u(z)+u(x)-u(z)=u(z)+\frac{u(x)-u(z)}r|y-z|\quad\forall y\in \partial B(z,r)\cap \overline \Omega_0.$$
Since $u(x)\ge u(z)$, we get
 \begin{align*}
u(y)=0\le  u(z)+\frac{u(x)-u(z)}{r}|y-z|,\quad \forall y\in \partial \Omega_0.
\end{align*}
Noting $\partial (B(z,r)\cap \Omega_0)\subset
(\partial B(z,r)\setminus\overline  \Omega_0)\cup \partial\Omega_0$, and
by applying  comparison   with cones from above we obtain
 \begin{align*}
u(y)\le  u(z)+\frac{u(x)-u(z)}{r}|y-z|,\quad \forall y\in B(z,r)\cap \Omega_0.
\end{align*}
Recalling $z=x+r\nu$, then by setting $y=x+t\nu$
for $t\in(0,r) $ one has
\begin{align*}
u(x+t\nu)&\le  u(z)+ \frac{u(x)-u(z)}{r}|x+t\nu-z|   \\
&=u(z)+(r-t) \frac{u(x)-u(z)}{r}= u(x)-t \frac{u(x)-u(z)}{r},
\end{align*}
that is,
$$\frac{u(x)-u(x+t\nu)}t\ge \frac{u(x)-u(x+r\nu)}{r}.$$
By letting $t\to0$, it holds that
$$|Du(x)|\ge -Du(x)\cdot\nu\ge \frac{u(x)-u(x+r\nu)}{r}>0,$$
that is, \eqref{e3.x1} holds. 

{\it Proof of \eqref{e3.x2}.}
In order to prove $ \nu_x\in N_x$, it suffices to show that
$Du(x)\cdot (y-x)\ge 0$ for all $y\in\Omega_{u(x)}$.
For any $y\in\Omega_{u(x)}$, via the differentiability of $u$ in $\Omega$ we have that
$$ Du(x)\cdot \frac{y-x}{|y-x|}=
\lim_{t\to 0^+}\frac{u\left(x+t \frac{y-x}{|y-x|}\right)-u(x)}{t}. $$
It follows by the convexity of $\Omega_{u(x)}$ that
  $$x+t \frac{y-x}{|y-x|}=(1-\frac t{|y-x|})x+\frac{t}{|y-x|}y \in \Omega_{u(x)}\quad\forall 0<t<|y-x|.$$
  Thus $$u(x+t \frac{y-x}{|y-x|})>u(x) \quad \mbox{and then
  $Du(x)\cdot (y-x)\ge0$.}$$

Next, we show that   $N_x=\{\nu_x\}$ by contradiction.
Assume that there exists { another}
$\nu\in N_x$ with $ \nu\ne \nu_x$.
We claim that there is a unit vector $\eta$ such that
$\eta\cdot \nu_x<0$ while $\eta\cdot \nu>0$.
Indeed,  $\nu$ and $\nu_x$ span a 2-dimension plane $P$.
If  $\nu\cdot\nu_x<0$ we take $\eta=\nu$.
If $\nu\cdot\nu_x\ge 0$, denote by $\theta$ the angle between $\nu$ and $\nu_{x}$,
we let $\eta$ be the unit vector obtained by rotating
$\nu$ on the other side of $\nu_x$ with the angle $\frac\pi 2-\frac\theta 2$ and then
$\eta\cdot\nu=\cos(\frac\pi 2-\frac\theta 2)>0$.
The angle between $\eta $ and $ \nu_x$ is $\frac\pi 2+\frac\theta 2$ while
$\eta\cdot\nu_{x}=\cos(\frac\pi 2+\frac\theta 2)<0$. This gives the claim.

Via the claim above, we conclude that
$$0<|Du(x)|(-\nu_x)\cdot\eta=Du(x)\cdot \eta=\lim_{t\to0^+}\frac{u(x+t\eta)-u(x)}{t},$$
which implies that
$u(x+t\eta)>u(x)$ for all small $t>0$.
Therefore  for such $t>0$, one has $x+t\eta\in \Omega_{u(x)}$,
which together with the definition of $\nu$ yields that
$$t\eta\cdot\nu= (x+t\eta-x)\cdot\nu\le 0.$$
This is a contradiction to our choice of $\eta$ which yields  $\eta\cdot \nu>0$.
 \end{proof}

Given  any $x\in\Omega$,
denote by $N^p_x$ the collection of all unit vector $\nu$ so that
$$\nu\cdot (y-x)\le 0\quad\forall y\in\Omega^p_{u_p(x)}.$$
Since $\Omega_{u_p(x)}^p$ is convex,  then $N^p_x\ne\emptyset$.
Given any $\nu\in N^p_x$,  recall  $r_{x,\nu}$ as given in \eqref{rxnu}.
Then
 $ x+r _{x,\nu}\nu\in \partial\Omega_0$, that is,   $u_p(x+r _{x,\nu}\nu)=0$,
 and moreover,  $r _{x,\nu}$ also satisfies \eqref{rxv}.
As $u_p$   enjoys comparison with cones from above (see Section \ref{Section3}),  the following
 follows from an argument similar to the one  used to prove Lemma \ref{low-u}. We  omit the details of the proof.

 \begin{lem}\label{low-up}
   Let $p\in(2,\fz)$. For any $x\in \Omega$,  it holds
\begin{align}\label{e3.x1-p}
 -Du_p(x)\cdot\nu \ge \frac{u_p(x)-u_p(x+r\nu)}{r}>0\quad\forall \nu\in N^p_x\quad r\in(0,r_{x,\nu}].
 \end{align}
 Moreover, \begin{align}\label{e3.x2-p}
 N^p_x=\left\{\nu^p_x:=-\frac{Du_p(x)}{|Du_p(x)|}\right\}.
 \end{align}
 Consequently,
\begin{align}\label{key-low3-p}
|Du_p(x)|\ge \frac{u_p(x)-u_p\left(x+r\nu\right)}{r}\quad\forall \nu\in N_x^p\quad r\in(0,r _{x,\nu}].
\end{align}
In particular,
\begin{align}\label{low-grup}
|Du_p(x)|\ge \frac{u_p(x)}{ r_{x,\nu_{x}^p}}.
\end{align}
 \end{lem}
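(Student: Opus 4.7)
My approach is to transcribe the proof of Lemma \ref{low-u} essentially verbatim, replacing $u$ by $u_p$ and $\Omega_{u(x)}$ by $\Omega^p_{u_p(x)}$. The two structural ingredients needed there are available for $u_p$: comparison with cones from above (Lemma \ref{lewis}) and convexity of the relevant super-level set (the quasi-concavity of $u_p$ proved by Lewis \cite{l77}). Differentiability is actually easier here, since $u_p \in C^\infty(\Omega)$ with $Du_p \neq 0$, so no ``almost everywhere'' caveats are needed.

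For \eqref{e3.x1-p}, I fix $\nu \in N^p_x$ and $r \in (0, r_{x,\nu}]$, set $z = x + r\nu$, and use the definition of $N^p_x$ together with convexity of $\Omega^p_{u_p(x)}$ to conclude $\overline{B(z,r)} \cap \overline{\Omega^p_{u_p(x)}} = \{x\}$ and $B(z,r) \cap \Omega^p_{u_p(x)} = \emptyset$. Hence $u_p \le u_p(x)$ on $\overline{B(z,r) \cap \Omega_0}$, so on $\partial(B(z,r) \cap \Omega_0)$ the cone $u_p(z) + \tfrac{u_p(x)-u_p(z)}{r}|y-z|$ dominates $u_p$ (on the part inside $\partial B(z,r)$ by the bound $u_p \le u_p(x)$, and on the part of $\partial \Omega_0$ using $u_p = 0$ there together with $u_p(x) \ge u_p(z)$). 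Comparison with cones from above extends this dominance to the interior. Evaluating at $y = x + t\nu$ for $t \in (0,r)$, rearranging, and sending $t \to 0^+$ via differentiability yields $-Du_p(x) \cdot \nu \ge (u_p(x) - u_p(z))/r$; strict positivity follows from $z \notin \Omega^p_{u_p(x)}$.

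For \eqref{e3.x2-p}, membership $\nu^p_x \in N^p_x$ is immediate: given $y \in \Omega^p_{u_p(x)}$, convexity of $\Omega^p_{u_p(x)}$ puts the half-open segment from $x$ to $y$ in the super-level set, so the directional derivative of $u_p$ at $x$ in direction $(y-x)/|y-x|$ is nonnegative, giving $Du_p(x) \cdot (y-x) \ge 0$, i.e.\ $\nu^p_x \cdot (y-x) \le 0$. For uniqueness, if some other $\nu \in N^p_x$ existed, I construct in the two-plane spanned by $\nu$ and $\nu^p_x$ a unit vector $\eta$ with $\eta \cdot \nu^p_x < 0$ and $\eta \cdot \nu > 0$ (take $\eta = \nu$ if $\nu \cdot \nu^p_x < 0$; otherwise bisect the angle suitably); then $Du_p(x) \cdot \eta > 0$ forces $u_p(x + t\eta) > u_p(x)$ for small $t > 0$, so $x + t\eta \in \Omega^p_{u_p(x)}$, contradicting $\nu \in N^p_x$ through $t \eta \cdot \nu > 0$.

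Finally \eqref{key-low3-p} follows from \eqref{e3.x1-p} combined with the Cauchy--Schwarz bound $-Du_p(x) \cdot \nu \le |Du_p(x)|$, and \eqref{low-grup} is the specialization $\nu = \nu^p_x$, $r = r_{x,\nu^p_x}$ in \eqref{key-low3-p}, using $u_p(x + r_{x,\nu^p_x}\nu^p_x) = 0$ since $x + r_{x,\nu^p_x}\nu^p_x \in \partial \Omega_0$. There is really no obstacle here---the author's omission of details is precisely because the argument transfers mechanically from Lemma \ref{low-u}; the only point worth double-checking is that the cone comparison at boundary points of $B(z,r) \cap \Omega_0$ lying on $\partial \Omega_0$ works, which it does thanks to the Dirichlet condition $u_p = 0$ on $\partial \Omega_0$.
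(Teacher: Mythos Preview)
Your proposal is correct and follows exactly the approach the paper intends: the authors explicitly omit the proof, noting it is obtained by an argument identical to that of Lemma \ref{low-u} once one uses that $u_p$ enjoys comparison with cones from above (Lemma \ref{lewis}) and that the super-level sets $\Omega^p_{u_p(x)}$ are convex. Your transcription of the cone-comparison step, the membership $\nu^p_x\in N^p_x$, and the uniqueness argument via the auxiliary vector $\eta$ match the paper's proof of Lemma \ref{low-u} essentially verbatim.
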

\begin{rem}\rm
Thanks to \eqref{low-gru} and \eqref{low-grup}, one concludes
 $$|Du(x)|\ge \frac{ u (x)}{{\rm diam}(\Omega_0)}
 \quad \mbox{and}\quad |Du_p(x)|\ge\frac{ u_p(x)}{{\rm diam}(\Omega_0)}\quad
\forall x\in \Omega.$$
This  lower bound of $|Du| $ and $|Du_p|$ was given by
Lindgren-Lindqvist \cite{ll21}.
\end{rem}

In this section, we finally give the following uniform lower bound and upper bound of $|Du_p|$.

\begin{lem}\label{un-bdup}
Given any ball $B(z,2r)\Subset \Omega$ we have
\begin{align}\label{uniforboundup}
\min_{\overline{B(z,r)}}|Du_p|\ge  \frac{1}{ {\rm diam}(\Omega_0)}\min\limits_{\overline{B(z,r)}}u_p
\quad {\rm and}\quad \max_{\overline{B(z,r)}}|Du_p|\le \frac1{r}.
\end{align}
Moreover, there exists a fixed constant $p_{z,r}>2$ such that
\begin{align}\label{uniforboundup-1}
\min_{\overline{B(z,r)}}|Du_p|\ge  \frac{1}{2 {\rm diam}(\Omega_0)}\min\limits_{\overline{B(z,r)}}u>0\quad \forall p> p_{z,r}.
\end{align}
  \end{lem}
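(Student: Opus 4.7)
The plan is to read off the three bounds from facts already in hand. The pointwise lower bound is exactly the remark after Lemma \ref{low-up}: combining \eqref{low-grup} with the universal upper bound $r_{x,\nu_x^p}\le \diam(\Omega_0)$ from \eqref{rxv} gives
\[|Du_p(x)|\ge \frac{u_p(x)}{\diam(\Omega_0)}\quad\forall x\in\Omega.\]
Taking the pointwise minimum over $x\in\overline{B(z,r)}$ immediately yields the first inequality in \eqref{uniforboundup}.

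For the upper bound in \eqref{uniforboundup}, I would use comparison with cones from above, which holds for $u_p$ by Lemma \ref{lewis}, combined with the monotonicity of $S_s^+$ in Lemma \ref{slope}. For any $x\in\overline{B(z,r)}$ the assumption $B(z,2r)\Subset\Omega$ forces $B(x,r)\subset B(z,2r)\Subset\Omega$, so $S_r^+(u_p,x)$ is defined. Since $u_p\in C^\fz$ one has $\lip u_p(x)=|Du_p(x)|$, and Lemma \ref{slope} gives
\[|Du_p(x)|=\lim_{s\to 0^+}S_s^+(u_p,x)\le S_r^+(u_p,x)\le \frac{1-u_p(x)}{r}\le \frac{1}{r},\]
where the penultimate inequality uses $0\le u_p\le 1$ in $\Omega$.

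For \eqref{uniforboundup-1}, I would invoke the local uniform convergence $u_p\to u$ on the compact set $\overline{B(z,r)}\subset\Omega$ (recorded in Section \ref{Section3}), together with the strong maximum principle which gives $u>0$ on $\Omega$, hence $\min_{\overline{B(z,r)}}u>0$. Picking $p_{z,r}>2$ so large that $\min_{\overline{B(z,r)}}u_p\ge \tfrac12\min_{\overline{B(z,r)}}u$ for every $p>p_{z,r}$ and inserting this into the first inequality of \eqref{uniforboundup} delivers \eqref{uniforboundup-1}. The argument is essentially a packaging of earlier work; the only mildly delicate point is verifying the ball containment $B(x,r)\subset B(z,2r)\Subset\Omega$ so that Lemma \ref{slope} applies, and there is no substantive obstacle.
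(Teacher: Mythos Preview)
Your proposal is correct and follows essentially the same route as the paper: the lower bound comes from \eqref{low-grup} together with $r_{x,\nu_x^p}\le\diam(\Omega_0)$, the upper bound from comparison with cones via Lemma \ref{slope} and $0\le u_p\le 1$, and \eqref{uniforboundup-1} from the uniform convergence $u_p\to u$ on $\overline{B(z,r)}$. The only cosmetic difference is that you record the intermediate estimate $S_r^+(u_p,x)\le (1-u_p(x))/r$ before discarding $u_p(x)\ge 0$, whereas the paper passes directly to $1/r$.
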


  \begin{proof}
  Assume that $B(z,2r)\Subset\Omega$.
Applying the inequality \eqref{low-grup} in Lemma \ref{low-up} we obtain
\begin{align*}
\min_{\overline{B(z,r)}}|Du_p|\ge  \frac{1}{ {\rm diam}(\Omega_0)}\min\limits_{\overline{B(z,r)}}u_p \quad \forall p>2.
\end{align*}
Observe that $u_p\to u$ in $C^0(\overline \Omega)$ and $u\neq 0$ on $\overline {B(z,r)}$.
Via $\min\limits_{\overline{B(z,r)}}u>0$ we can find a fixed constant $p_{z,r}>2$ such that
$$\min\limits_{\overline{B(z,r)}}u_p\ge \frac 12\min\limits_{\overline{B(z,r)}}u\quad
\forall p> p_{z,r}.$$
This proves \eqref{uniforboundup-1}.

On the other hand, since $u_{p}$ enjoys the comparison with cones from above (see Section \ref{Section3}), by Lemma \ref{slope}
we have
$$|Du_{p}(x)|=\lip\, u_p(x)\le S^+_r(u_{p}, x)=\max_{|y-x|=r}\frac{u_{p }(y)-u_{p}(x)}{r }\quad \forall x\in B (z,r).$$
Thanks to $0\le u_p(x)\le 1$ and $B(z,2r)\Subset \Omega$, we obtain
$$\max_{\overline {B(z,r)}}|Du_{p}|\le \frac1{r}.$$
Hence we complete this proof.

 \end{proof}

\section{Proof of Theorem \ref{c1-re}}\label{Section5}
This section is devoted to proving Theorem \ref{c1-re}, which is split into
several lemmas.
The $C^1$ regularity of $u$ follows from Lemma \ref{uc1}. The locally uniform convergence of $Du_p$ relies on Lemma
\ref{upc1}.

Our first observation is continuity of $\nu_x=-\frac{Du(x)}{|Du(x)|}$ and
$|Du(x)|$ in $x\in\Omega$ coming from
Lemma \ref{low-u}.
\begin{lem}\label{uc1}
Suppose that $x_k\to x$ as $k\to \fz$. Then the following holds.

\begin{itemize}
\item[(i)] $\nu_{x_k}\to \nu_{x}$ as $k\to \fz$.

\item[(ii)] $|Du(x_k)|\to |Du(x)|$ as $k\to \fz$.
\end{itemize}
\end{lem}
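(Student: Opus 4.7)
The plan is to prove (i) first by a compactness argument exploiting convexity of super level sets and the uniqueness $N_x=\{\nu_x\}$ already supplied by Lemma \ref{low-u}, and then deduce (ii) by combining (i) with the lower bound \eqref{key-low3} against the upper semi-continuity of $\lip u=|Du|$.

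For (i), since each $\nu_{x_k}$ lies on the unit sphere, every subsequence admits a further subsequence converging to some unit vector $\nu^{\ast}$, and it suffices to show $\nu^{\ast}=\nu_x$. By \eqref{e3.x2} this reduces to verifying $\nu^{\ast}\in N_x$. Fix any $y\in\Omega_{u(x)}$; continuity of $u$ gives $u(x_k)\to u(x)<u(y)$, so $y\in\Omega_{u(x_k)}$ eventually, whence $\nu_{x_k}\cdot(y-x_k)\le 0$ by definition of $N_{x_k}$. Passing to the limit yields $\nu^{\ast}\cdot(y-x)\le 0$ for every $y\in\Omega_{u(x)}$, i.e., $\nu^{\ast}\in N_x=\{\nu_x\}$. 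Since every subsequence has a further subsequence converging to $\nu_x$, the whole sequence converges.

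For (ii), the upper bound $\limsup_k|Du(x_k)|\le|Du(x)|$ follows from upper semi-continuity of $|Du|$: comparison with cones from above together with Lemma \ref{slope} writes $|Du|=\lip u=\inf_{r>0}S^+_r(u,\cdot)$, an infimum of functions continuous in $x$ for each fixed $r$. For the lower bound, I apply \eqref{key-low3} at $x_k$ with $\nu=\nu_{x_k}$: any $0<r<\dist(x,\partial\Omega_0)$ satisfies $r\le r_{x_k,\nu_{x_k}}$ for $k$ large by \eqref{rxv}, and hence
\[
|Du(x_k)|\ge \frac{u(x_k)-u(x_k+r\nu_{x_k})}{r}.
\]
Letting $k\to\fz$, using (i) and continuity of $u$, yields $\liminf_k|Du(x_k)|\ge \tfrac{1}{r}\bigl(u(x)-u(x+r\nu_x)\bigr)$; finally, sending $r\to 0^+$ and invoking differentiability of $u$ at $x$ (Theorem \ref{everydiff}) gives $\liminf_k|Du(x_k)|\ge -Du(x)\cdot\nu_x=|Du(x)|$.

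The main delicacy lies in (i), where I must pass the limit inside the defining inequality of $N_{x_k}$; this works precisely because the super level sets $\Omega_{u(x_k)}$ vary continuously in $k$ through continuity of $u$, and because Lemma \ref{low-u} has already pinned $N_x$ down as a singleton via differentiability of $u$ and convexity of $\Omega_{u(x)}$. Once (i) and (ii) are in hand, the identity $Du(x_k)=-|Du(x_k)|\,\nu_{x_k}\to -|Du(x)|\,\nu_x=Du(x)$ immediately delivers the continuity of $Du$ on $\Omega$ that is required for the remainder of Theorem \ref{c1-re}.
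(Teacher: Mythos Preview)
Your proof is correct and follows essentially the same approach as the paper's own argument: for (i) you use the identical compactness-on-the-sphere argument, passing the defining inequality of $N_{x_k}$ to the limit and invoking the singleton property \eqref{e3.x2}; for (ii) you combine upper semi-continuity of $|Du|$ with the lower bound \eqref{key-low3} at $x_k$ and send first $k\to\infty$ then $r\to 0$, exactly as in the paper. The only cosmetic differences are that you justify upper semi-continuity explicitly via $|Du|=\inf_r S^+_r(u,\cdot)$ (the paper simply cites \cite{ceg01}) and handle the range of admissible $r$ through \eqref{rxv} rather than the paper's $\tfrac14\dist(x_k,\partial\Omega_0)$.
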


\begin{proof}
{\it Proof of (i)}.
Let $\mu$ be one of  limit points of $\{\nu_{x_k}\}$. Up to a subsequence, we
may assume that $\nu_{x_k}\to\mu$. Below we show that $\mu=\nu_x$.
Given any $y\in\Omega_{u(x)}$, since $u(x_k)\to u(x)$, for all sufficiently large $k$
we have $y\in\Omega_{u(x_k)}$, and hence
$$\nu_{x_k}\cdot (y-x_k)\le 0.$$
Letting $k\to\fz$ one has
$$\mu\cdot (y-x)\le 0.$$
Then by \eqref{e3.x2} in Lemma \ref{low-u} we have $\mu=\nu_{x}$. This proves (i).

{\it Proof of (ii)}.
We already know that
$|Du(x)|\ge\limsup_{k\to\fz}|Du(x_k)|$ by the upper semicontinuous of $|Du|$; see \cite{ceg01}.
Thus it suffices to prove 
\begin{align}\label{liminf}\liminf_{k\to\fz}|Du(x_k)|\ge |Du(x)|.
\end{align}

Recall that \eqref{key-low3} in Lemma \ref{low-u} give us
$$|Du(x_k)|\ge \frac{u(x_k)-u\left(x_k+t\nu_{x_k} \right)}{t}
\quad \forall k\ge 1,\quad \forall 0<t<\frac14{\rm dist}(x_k, \partial\Omega_0).$$
Sending  $k\to \fz$, by $\nu_{x_k}\to\nu_x$ in the proof of (i) and $u\in C^0(\Omega_0)$ one gets
$$u(x_k)-u\left(x_k+t\nu_{x_k} \right)\to u(x)-u(x+t\nu)\quad \mbox{and}\quad
{\rm dist}(x_k, \partial\Omega_0)\to {\rm dist}(x, \partial\Omega_0),$$
and hence
\begin{align*}
\liminf_{k\to\fz}|Du(x_k)|\ge \frac{u(x)-u\left(x+t\nu_x\right)}{t}
\quad \forall 0<t<\frac14{\rm dist}(x, \partial\Omega_0).
\end{align*}
Observe that
\begin{align*}
u\left(x +t\nu_x\right)-u(x)=-t|Du(x )|+o(t)
\end{align*}
and hence
$$\frac{u(x)-u\left(x+t\nu_x\right)}{t}=|Du(x)|
+o(1).$$
 As $ t\to0$, it follows that  \eqref{liminf}. Hence we complete this proof.
\end{proof}

To obtain the locally uniform convergence of $Du_p$, we also need the following lemma,
which is exactly the same as Lemma \ref{uc1}.
\begin{lem}\label{upc1}
Suppose that $x_j\to x\in\Omega$ and $p_j\to\fz$ as $j\to\fz$. Then the following holds.
\begin{itemize}
\item[(i)]  $ \nu^{p_j}_{x_j}\to \nu_x$ as $j\to\fz$.

\item[(ii)] $|Du_{p_j}(x_j)|\to |Du(x)|$ as $j\to\fz$.
\end{itemize}
\end{lem}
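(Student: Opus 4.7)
\textbf{Plan for proving Lemma \ref{upc1}.} The argument parallels that of Lemma \ref{uc1}, but replaces the single $\infty$-harmonic potential with the sequence $u_{p_j}$ and exploits uniform convergence $u_{p_j}\to u$ in $C^0(\overline\Omega)$ together with comparison-with-cones-from-above for $u_{p_j}$ (Lemma \ref{lewis}). I would prove (i) first, then use it to bound $|Du_{p_j}(x_j)|$ from below, and derive the upper bound via slope monotonicity (Lemma \ref{slope}).

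\emph{Proof of (i).} Let $\mu$ be any limit point of $\{\nu^{p_j}_{x_j}\}$; passing to a subsequence, assume $\nu^{p_j}_{x_j}\to\mu$. Fix $y\in\Omega_{u(x)}$, so $u(y)>u(x)$. By continuity of $u$, uniform convergence $u_{p_j}\to u$ on $\overline\Omega$, and $x_j\to x$, for all large $j$ we have $u_{p_j}(y)>u_{p_j}(x_j)$, i.e.\ $y\in\Omega^{p_j}_{u_{p_j}(x_j)}$. The definition of $\nu^{p_j}_{x_j}$ then yields $\nu^{p_j}_{x_j}\cdot(y-x_j)\le 0$, and letting $j\to\infty$ gives $\mu\cdot(y-x)\le 0$. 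As this holds for all $y\in\Omega_{u(x)}$, we have $\mu\in N_x$, and \eqref{e3.x2} of Lemma \ref{low-u} forces $\mu=\nu_x$. Since every limit point equals $\nu_x$ and the unit sphere is compact, $\nu^{p_j}_{x_j}\to\nu_x$.

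\emph{Proof of (ii), lower bound.} Fix $t\in(0,\tfrac14\mathrm{dist}(x,\partial\Omega_0))$; then for large $j$, $t<\tfrac14\mathrm{dist}(x_j,\partial\Omega_0)\le r_{x_j,\nu^{p_j}_{x_j}}$ by \eqref{rxv}. Inequality \eqref{key-low3-p} of Lemma \ref{low-up} gives
\[
|Du_{p_j}(x_j)|\ge \frac{u_{p_j}(x_j)-u_{p_j}(x_j+t\nu^{p_j}_{x_j})}{t}.
\]
Using $u_{p_j}\to u$ in $C^0(\overline\Omega)$, $x_j\to x$, and $\nu^{p_j}_{x_j}\to\nu_x$ from (i), the right-hand side converges to $\tfrac{u(x)-u(x+t\nu_x)}{t}$. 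Hence $\liminf_{j\to\infty}|Du_{p_j}(x_j)|\ge \tfrac{u(x)-u(x+t\nu_x)}{t}$. Sending $t\to 0^+$ and using the differentiability of $u$ at $x$ together with $-Du(x)\cdot\nu_x=|Du(x)|$ yields $\liminf_{j\to\infty}|Du_{p_j}(x_j)|\ge|Du(x)|$.

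\emph{Proof of (ii), upper bound.} Since $u_{p_j}$ enjoys comparison with cones from above (Lemma \ref{lewis}) and is smooth, Lemma \ref{slope} yields $|Du_{p_j}(x_j)|=\lip\,u_{p_j}(x_j)\le S^+_r(u_{p_j},x_j)$ whenever $B(x_j,r)\Subset\Omega$. Fix $r>0$ with $B(x,2r)\Subset\Omega$; for large $j$, $B(x_j,r)\Subset\Omega$ and uniform convergence gives $S^+_r(u_{p_j},x_j)\to S^+_r(u,x)$ as $j\to\infty$. Therefore $\limsup_{j\to\infty}|Du_{p_j}(x_j)|\le S^+_r(u,x)$ for every such $r$. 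Since $u$ is $\infty$-harmonic, Lemma \ref{slope} and Theorem \ref{everydiff} give $\lim_{r\to 0}S^+_r(u,x)=\lip\,u(x)=|Du(x)|$. Sending $r\to 0$ completes the proof: combined with the lower bound, $|Du_{p_j}(x_j)|\to|Du(x)|$.

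The main technical obstacle is the delicate management of the order of limits in both directions of (ii): for the lower bound one passes $j\to\infty$ at fixed admissible $t$ and then $t\to 0$; for the upper bound one passes $j\to\infty$ at fixed small $r$ and then $r\to 0$. Part (i) is then essential for controlling the direction in which the lower bound is taken, and the uniform ellipsoidal bound \eqref{rxv} ensures the admissibility $t\le r_{x_j,\nu^{p_j}_{x_j}}$ persists along the sequence.
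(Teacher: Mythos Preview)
Your proposal is correct and follows essentially the same approach as the paper's proof: part (i) is argued identically, and part (ii) combines the same lower bound via \eqref{key-low3-p} with the same upper bound via slope monotonicity (Lemma \ref{slope}) and $S^+_r(u,x)\to|Du(x)|$. The only cosmetic difference is that the paper establishes $\limsup_{j}S^+_r(u_{p_j},x_j)\le S^+_r(u,x)$ by tracking the maximizers $y_{j,r}\in\partial B(x_j,r)$ and showing their limit points lie on $\partial B(x,r)$, whereas you assert the convergence $S^+_r(u_{p_j},x_j)\to S^+_r(u,x)$ directly from uniform convergence; both are valid.
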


\begin{proof}
{\it Proof of (i)}.
Thanks to Lemma \ref{un-bdup},
up to a subsequence, we may assume that
$ \nu^{p_j}_{x_j}$ converges to some unit vector $ \mu$.
It is enough to show that
$\mu=\nu_x$.
Due to Lemma \ref{low-u}, we only need to show that
$\mu\in N_x$, that is,
$$\mu\cdot (y-x)\le 0\quad\forall y\in\Omega_{u(x)}.$$

Given  any $y\in \Omega_{u(x)}$,   one has $u(y)>u(x)$.
Recall that $u_{p_{j }}\to u$ uniformly in $\overline \Omega $ and $x_{j }\to x\in \Omega$ as $j\to \fz$, we know that $u_{p_j}(x_j)\to u(x)$, $u_{p_j}(y)\to u(y)$ and $u_{p_j}(x_j)\to u(x)$ as $j\to\fz$.
For sufficiently large $j$, it then follows  $u_{p_{j }}(y)>u_{p_j}(x_j)$, that is,    $y\in\Omega^{p_j}_{u_j(x)}.$
By Lemma \ref{low-up}, one  gets
\begin{align*}
 \nu^{p_j}_{x_j}\cdot (y-x_j)\le 0.
\end{align*}
Letting $j\to\fz$,  we conclude that $\mu\cdot (y-x )\le 0$ as desired.

{\it Proof of (ii)}.
Since $x_j\to x\in\Omega$, via Lemma \ref{un-bdup}
 we know that $\{ |Du_{p_j}(x_j)|\}_j$ is bounded uniformly $p_j>2$.

We first show $\limsup_{j\to\fz}|Du_{p_j}(x_j)|\le |Du(x)|$. Indeed,
 since $x_j\to x$ as $ j\to\fz$,
for all sufficiently large $j$ so that $\frac 12{\rm dist}(x, \partial\Omega)\le {\rm dist}(x_j, \partial\Omega)$.
For such $j$,  given any $0< r< \frac 12{\rm dist}(x, \partial\Omega)$,
by Lemma \ref{slope} and Lemma \ref{lewis}, we have
$
 |Du_{p_j}(x_j)| \le  S^+_r(u_{p_j}, x_j) $.
Choose $y_{j,r}\in \partial B(x_j,r)$ so that
$$S^+_r(u_{p_j}, x_j)=\max_{|y-x_j|=r}\frac{u_{p _j}(y)-u_{p_j}(x_j)}{r }
=\frac{u_{p _j}(y_{j,r})-u_{p_j}(x_j)}{r }.$$
We then have
$$
 |Du_{p_j}(x_j)| \le
 \frac{u_{p _j}(y_{j,r})-u_{p_j}(x_j)}{r }.$$
Noting that $$r-|x_j-x|=|y_{j,r}-x_j| -|x_j-x|\le |y_{j,r}-x|\le |y_{j,r}-x_j| +|x_j-x |=r+|x_j-x|,$$
one has $\lim_{j\to\fz} |y_{j,r}-x|=r$, and hence,
  all limits of $y_{j,r}$ belong to $\partial B(x,r)$.
  Since  $u_{p_j}\to u$ uniformly in $\overline \Omega$,  we get
\begin{align*}
  \limsup_{j\to\fz}|Du_{p_j}(x_j)|
&=\limsup_{j\to\fz}\frac{u_{p _j}(y_{j,r})-u_{p_j}(x_j)}{r }\le\sup_{y\in \partial B(x,r)}
 \frac{u (y )-u (x)}{r }= S^+_r(u,x).
\end{align*}
Therefore, by Lemma \ref{slope} and Theorem \ref{everydiff}, one has
$$  \limsup_{j\to\fz}|Du_{p_j}(x_j)| \le \lim_{r\to0}S^+_r(u,x)=|Du(x)|.$$

 On the other hand,
by \eqref{key-low3-p} in Lemma \ref{low-up} we have
$$|Du_{p_j}(x_j)|\ge \frac{u_{p_j}(x_j)-u_{p_j}\left(x_j+t\nu_{x_j}^{p_j} \right)}{t}
\quad \forall j\ge 1,\quad \forall t<\frac 14{\rm dist}(x_j,\partial\Omega_0).$$
Since $u_{p_j}(x_j)\to u(x)$ and $\nu_{x_j}^{p_j}\to \nu_x$ due to (i) in this lemma,
letting $j\to \fz$ we get
$$\liminf_{j\to\fz}|Du_{p_j}(x_j)|\ge \frac{u(x)-u\left(x+t\nu_x\right)}{t}
,\quad \forall 0<t<\frac 14{\rm dist}(x,\partial\Omega_0) .$$
Using the differentiability of $u$ in $\Omega$, by sending $t\to 0$ we conclude
$$\liminf_{j\to\fz}|Du_{p_j}(x_j)|\ge -Du(x)\cdot\nu_x=|Du(x)|.$$
Hence we finish this proof.
\end{proof}

Let us now proof Theorem \ref{c1-re}.

\begin{proof}[Proof of Theorem \ref{c1-re}]
The $C^1$ regularity of $u$ follows from Lemma \ref{uc1}.
Also, the locally uniform convergence of $Du_p\to Du$ is a direct consequence of
Lemma \ref{upc1}.
\end{proof}

\section{Two fundamental inequalities under quasi-concavity}\label{Section6}

We establish a fundamental algebraic and geometric structural  inequalities involving $ \Delta_\fz v$ for smooth
 quasi-concave functions $v$
 via the geometry of the contour  surfaces.
\begin{lem}\label{id1}
Let $n\ge 2$ and let $v:\rn \to\rr$ be a quasi-concave function.
If  $v\in C^\fz(U)$ for some domain $U\subset \rn$, then
\begin{align}\label{e6.x1}2[| D^2v Dv|^2-\bdz v\bdz_\fz v]\ge |Dv|^2[| D^2v|^2-(\bdz v)^2] \quad{\rm in}\quad U.
\end{align}
\end{lem}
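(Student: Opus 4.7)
The plan is to work pointwise: fix $x_0\in U$, handle the trivial case $Dv(x_0)=0$ separately (both sides vanish since $D^2v\,Dv=0$ and $|Dv|^2=0$ there), and otherwise choose an orthonormal frame $\{e_1,\dots,e_n\}$ at $x_0$ with $e_n=Dv(x_0)/|Dv(x_0)|$, so that $v_n=|Dv|$ and $v_i=0$ for $i<n$. In this frame, set $a:=v_{nn}$, $b_i:=v_{in}$ for $i<n$, and let $B:=(v_{ij})_{i,j<n}$ be the tangential Hessian.

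The geometric input from quasi-concavity is that the super-level set $\{v\ge v(x_0)\}$ is convex. Using the implicit function theorem, I would write the level set $\{v=v(x_0)\}$ locally as a graph $x_n=f(x_1,\dots,x_{n-1})$ with $f(0)=0$, $Df(0)=0$, and compute $D^2f(0)=-B/|Dv(x_0)|$ by implicit differentiation. Convexity of $\{v\ge v(x_0)\}$, which locally equals $\{x_n\ge f\}$ (since $v_n>0$), forces $f$ to be concave, hence $D^2f(0)\le 0$, hence $B\ge 0$ as a symmetric matrix. This is the decisive structural fact.

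With this in hand, I would carry out the algebraic reduction directly. In the chosen frame one has $\Delta_\fz v=v_n^2 a$, $|D^2v\,Dv|^2=v_n^2(a^2+|b|^2)$, $\Delta v=\mathrm{tr}(B)+a$, and $|D^2v|^2=\mathrm{tr}(B^2)+2|b|^2+a^2$. Plugging these into the two sides of \eqref{e6.x1} and simplifying, the cross terms involving $a$, $|b|^2$ and $a\,\mathrm{tr}(B)$ cancel and one is left with
\begin{equation*}
2[|D^2v\,Dv|^2-\Delta v\,\Delta_\fz v]-|Dv|^2[|D^2v|^2-(\Delta v)^2]
=|Dv|^2\bigl[\mathrm{tr}(B)^2-\mathrm{tr}(B^2)\bigr].
\end{equation*}

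The proof then finishes by Newton's inequality for positive semi-definite matrices: writing the eigenvalues of $B$ as $\lambda_1,\dots,\lambda_{n-1}\ge 0$, one has $\mathrm{tr}(B)^2=\sum_i\lambda_i^2+2\sum_{i<j}\lambda_i\lambda_j\ge\sum_i\lambda_i^2=\mathrm{tr}(B^2)$, where non-negativity of the cross terms is exactly where $B\ge 0$ is used. I expect the computation itself to be routine; the only conceptually delicate step is the geometric one, namely converting quasi-concavity of $v$ into positive semi-definiteness of the tangential Hessian $B$ via the concavity of the graphing function $f$ — and in particular keeping the signs straight between inward/outward normals to the super-level set.
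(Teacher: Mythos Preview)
Your approach is essentially the paper's: reduce pointwise to a frame with $e_n=Dv/|Dv|$, extract semi-definiteness of the tangential Hessian $B=(v_{ij})_{i,j<n}$ from convexity of the super-level set, and reduce the inequality to $\mathrm{tr}(B)^2-\mathrm{tr}(B^2)=2\sum_{i<j}(B_{ii}B_{jj}-B_{ij}^2)\ge 0$, which is exactly the paper's Lemma~\ref{det-2}. One sign slip (which you yourself flagged as the delicate point): since $v_n>0$, the set $\{v\ge v(x_0)\}$ is locally $\{x_n\ge f(x')\}$, and convexity of this epigraph forces $f$ \emph{convex}, not concave; hence $D^2f(0)=-B/|Dv|\ge 0$ gives $B\le 0$, matching the paper's \eqref{id1-1}. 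This is harmless for your final step, because $\mathrm{tr}(B)^2-\mathrm{tr}(B^2)=2\sum_{i<j}\lambda_i\lambda_j\ge 0$ holds whenever all eigenvalues share a sign, so negative semi-definiteness works just as well.
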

This further allows us to get the following fundamental divergence inequality
for quasi-concave functions.

\begin{lem}\label{id2}
Let $n\ge 2$ and let $v:\rn \to\rr$ be a quasi-concave function.
If  $v\in C^\fz(U)$ for some domain $U\subset \rn$, then
\begin{align*}
{\rm div}\left(|Dv|^{-2}(\bdz v Dv-D ^2v Dv)\right)\ge 0\quad{\rm in}\quad U\backslash \{Dv\neq 0\}.\end{align*}
\end{lem}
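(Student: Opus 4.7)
The plan is to reduce Lemma \ref{id2} to the pointwise algebraic inequality \eqref{e6.x1} of Lemma \ref{id1} by a direct (if somewhat tedious) computation of the divergence on the open set $\{Dv\ne 0\}$, where the vector field $F:=|Dv|^{-2}(\Delta v\,Dv-D^2v\,Dv)$ is smooth. Since Lemma \ref{id1} has already been established, no further geometric input is needed; the remaining work is purely calculus with indices. (I read the statement ``in $U\backslash\{Dv\ne 0\}$'' as a typo for ``on $U\cap\{Dv\ne 0\}$,'' which is the only set on which $F$ is even defined.)

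First, I would split $F$ by the product rule, writing the components as $F_i=|Dv|^{-2}(\Delta v\cdot v_i-\sum_j v_{ij}v_j)$, and computing
\[
\partial_i|Dv|^{-2}=-2|Dv|^{-4}\sum_k v_k v_{ki}.
\]
For the ``outer'' piece, I would use $\sum_i\partial_i(\Delta v\cdot v_i)=D\Delta v\cdot Dv+(\Delta v)^2$ and, after swapping the order of differentiation, $\sum_{i,j}\partial_i(v_{ij}v_j)=D\Delta v\cdot Dv+|D^2v|^2$. These cancel to give a clean $|Dv|^{-2}[(\Delta v)^2-|D^2v|^2]$ contribution.

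For the ``inner'' piece coming from differentiating $|Dv|^{-2}$, I would recognize the two natural contractions
\[
\sum_{i,k}v_i v_{ik} v_k=\Delta_\infty v,\qquad \sum_{i,j,k} v_k v_{ki} v_{ij} v_j=|D^2v\,Dv|^2,
\]
which together produce the contribution $2|Dv|^{-4}[|D^2v\,Dv|^2-\Delta v\,\Delta_\infty v]$. Adding the two contributions yields the key identity
\[
|Dv|^2\,\mathrm{div}\,F=\bigl[(\Delta v)^2-|D^2v|^2\bigr]+2|Dv|^{-2}\bigl[|D^2v\,Dv|^2-\Delta v\,\Delta_\infty v\bigr].
\]

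The conclusion is then immediate: the right-hand side is nonnegative precisely because it is exactly
\[
|Dv|^{-2}\bigl(\,2[|D^2v\,Dv|^2-\Delta v\,\Delta_\infty v]-|Dv|^2[|D^2v|^2-(\Delta v)^2]\,\bigr),
\]
which is $\ge 0$ by Lemma \ref{id1}. Since $|Dv|^2>0$ on $\{Dv\ne 0\}$, this gives $\mathrm{div}\,F\ge 0$ there, as claimed. I expect the only real obstacle to be bookkeeping: keeping track of which indices are free versus contracted, and using symmetry of $v_{ij}$ and equality of mixed partials $v_{iij}=v_{iji}=v_{jii}$ to justify swapping derivatives so that the $D\Delta v\cdot Dv$ terms cancel. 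Everything else is an automatic consequence of Lemma \ref{id1}, which is where the quasi-concavity hypothesis is actually used.
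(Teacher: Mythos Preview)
Your proposal is correct and follows essentially the same route as the paper: compute ${\rm div}\,F$ directly on $\{Dv\ne 0\}$, observe that the $D\Delta v\cdot Dv$ terms cancel, and arrive at the identity ${\rm div}\,F=|Dv|^{-2}[(\Delta v)^2-|D^2v|^2]+2|Dv|^{-4}[|D^2v\,Dv|^2-\Delta v\,\Delta_\infty v]$, whose nonnegativity is exactly Lemma~\ref{id1}. The only cosmetic difference is that the paper organizes the calculation by expanding ${\rm div}(|Dv|^{-2}\Delta v\,Dv)$ and ${\rm div}(|Dv|^{-2}D^2v\,Dv)$ separately and then subtracting, whereas you split by the product rule into ``inner'' and ``outer'' pieces; the resulting identity and its use are identical.
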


\begin{rem}\label{rem-n-2}
\rm
In dimension $n=2$,
for any function $v\in C^\fz(U)$ it was shown in \cite{kzz} that
$$2[| D^2v Dv|^2-\bdz v\bdz_\fz v]= |Dv|^2[| D^2v|^2-(\bdz v)^2] \quad{\rm in}\quad U
$$
and
$${\rm div}\left(|Dv|^{-2}(\bdz v Dv-D ^2v Dv)\right)=0 \quad{\rm in}\quad U\backslash \{Dv\neq 0\}.$$
So the main ingredient for Lemma \ref{id1} and Lemma \ref{id2} is in dimension $n\ge3$.
\end{rem}

We need the following lemma to prove Lemma \ref{id1}. We denote
$e_n=(0,...,0,1)$.
\begin{lem}\label{det-2}
Let $n\ge 3$ and let $v:\rn \to\rr$ be a quasi-concave function.
Assume that $v\in C^2(B(0,r))$, $v(0)=0$ and $Dv(0)=e_n$.
Then
\begin{align}\label{id1-2}
 (w_{x_ix_j})^2-w_{x_ix_i}w_{x_jx_j}\le 0 \quad \forall 1\le i<j\le n-1.
\end{align}
\end{lem}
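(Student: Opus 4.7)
The plan is to use the implicit function theorem to locally represent the zero level set of $v$ as the graph of a function $\phi$, then exploit quasi-concavity to deduce convexity of $\phi$, and finally read off the desired inequality from the non-negativity of $2\times 2$ principal minors of $D^2\phi(0)$.

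First, since $v\in C^2(B(0,r))$ with $v(0)=0$ and $v_{x_n}(0)=1>0$, by continuity I may shrink $r$ so that $v_{x_n}>0$ on $B(0,r)$. The implicit function theorem then yields a $C^2$ function $\phi\colon V\to\rr$ on some neighborhood $V\subset\rr^{n-1}$ of the origin such that $\phi(0)=0$, $v(x',\phi(x'))=0$ for all $x'\in V$, and, after possibly shrinking $V$ and $r$ further,
$$\{x\in B(0,r):v(x)>0\}=\{(x',x_n)\in B(0,r):x_n>\phi(x')\}.$$

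Second, I would show $\phi$ is convex on a small neighborhood of $0$. For $x',y'\in V$ sufficiently close to the origin, the points $(x',\phi(x'))$ and $(y',\phi(y'))$ lie in the closure of $\{v>0\}$, which is convex in $\rn$ by the quasi-concavity of $v$ and continuity. Their midpoint lies in this closed convex set, and if $x',y'$ are close enough it also lies in $B(0,r)$, so it satisfies $x_n\ge\phi(x')$. This gives
$$\phi\Bigl(\frac{x'+y'}{2}\Bigr)\le \frac{\phi(x')+\phi(y')}{2},$$
so $\phi$ is midpoint convex, hence convex.

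Third, I differentiate the identity $v(x',\phi(x'))=0$. A first differentiation gives $v_{x_i}+v_{x_n}\phi_{x_i}=0$; evaluated at $0$ this yields $\phi_{x_i}(0)=-v_{x_i}(0)=0$ for $1\le i\le n-1$. A second differentiation, evaluated at $0$ using $\phi_{x_i}(0)=0$ and $v_{x_n}(0)=1$, gives
$$v_{x_ix_j}(0)+\phi_{x_ix_j}(0)=0,\qquad 1\le i,j\le n-1.$$

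Finally, the convexity of $\phi$ implies that $D^2\phi(0)$ is positive semidefinite, so each of its $2\times 2$ principal minors is nonnegative:
$$\phi_{x_ix_i}(0)\phi_{x_jx_j}(0)-\phi_{x_ix_j}(0)^2\ge 0,\qquad 1\le i<j\le n-1.$$
Substituting $\phi_{x_ix_j}(0)=-v_{x_ix_j}(0)$ gives $v_{x_ix_i}(0)v_{x_jx_j}(0)-v_{x_ix_j}(0)^2\ge 0$, which is exactly \eqref{id1-2}. There is no real obstacle in this plan; the only delicate point is the passage from global quasi-concavity of $v$ to local convexity of $\phi$, handled by choosing the neighborhood small enough so that the super-level set is realized as the strict epigraph of $\phi$.
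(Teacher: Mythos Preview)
Your proof is correct and follows essentially the same approach as the paper: both use the implicit function theorem to describe the zero level set near the origin, exploit quasi-concavity to obtain a convexity statement (the paper shows $v(y)\le 0$ on the tangent hyperplane $\{y_n=0\}$ and deduces $D^2v(0)\xi\cdot\xi\le 0$ for $\xi_n=0$; you equivalently show the graph function $\phi$ is convex so $D^2\phi(0)\ge 0$, which after the relation $\phi_{x_ix_j}(0)=-v_{x_ix_j}(0)$ is the same thing), and then conclude via nonnegativity of $2\times 2$ principal minors. The packaging differs only in whether one Taylor-expands $v$ along the tangent plane or differentiates the defining identity for $\phi$.
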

The inequality \eqref{id1-2} essentially says that the sectional curvature is non-positive at $0$.
This is well-known in differential geometry.
For reader's convenience, we give the details via an  analysis argument.

\begin{proof}
First, note that the quasi-concavity of $v$ guarantees
\begin{align}\label{id1-1a}
\mbox{
 $v(y)\le 0$ for all $y\in B(0,s) $ with $y_n=0$ }  \end{align}
 for some $0<s<r$.
Indeed, since $v\in C^2(B(0,r))$ and $Dv(0)=e_n$, by the implicit theorem we know that
the equation $v(y)=0$ determines a $(n-1)$-dimension surface $S$ in a ball  $B(0,s)$
for some $0<s<r$.  The boundary of
$ \{y\in B(0,s)|v(y)>0\}$ is $(S\cap B(0,s))\cup (\partial B(0,s)\cap \{v>0\})$.
Since $Dv(0)=e_n$, the tangential plane of $S$ at $0$ is given by the hyperplane
$P=\{y\in\rn|y_n=0 \}$.  The convexity of
$\{y\in\rn |v(y)>0\}$  implies that the convexity of $\{y\in B(0,s)|v(y)>0\}$.
Thus
$\{y\in B(x,s)|v(y)>0\}$ lies in above $P$, that is,
if $v(y)>0$, then $ y_n> 0 $.
 Therefore, \eqref{id1-1a} holds as desired.

Applying  \eqref{id1-1a},  we claim that
\begin{align}\label{id1-1}
 D^2v(0)\xi \cdot \xi \le 0\quad \forall \xi \in \rn \quad
{\rm with } \quad \xi_n=0,
\end{align}
that is, the second fundamental form of $S$ at point $0$ is non-positive definite.
Indeed, given any $\xi\in\partial B(0,1)$,
by Taylor's expansion  and $Dv(0)=e_n$ one has
$$v( t\xi)=v(0)+t\xi_n+\frac{t^2}2 D^2v(0)\xi \cdot \xi  +o(t^2).$$
If $\xi_n=0$,
by \eqref{id1-1a} one then has $v( t\xi)\le v(0)=0$   and hence
 $$D^2v(0)\xi \cdot \xi =  \lim_{t\to0}\frac2{t^2} [v( t\xi)-v(0)]\le 0.$$
This gives the claim \eqref{id1-1}.

Next,  at point $0$ we apply \eqref{id1-1}   to get
\begin{align*}
 (w_{x_ix_j})^2-w_{x_ix_i}w_{x_jx_j}\le 0 \quad \forall 1\le i<j\le n-1.
\end{align*}
Indeed, given any pair $(i,j)$ with $1\le i<j\le n-1$, write
 \begin{equation*}
	Q:=\begin{bmatrix}
	 w_{x_ix_i} &\  w_{x_ix_j}  \\
	w_{x_jx_i} &\  w_{x_jx_j}
	 \end{bmatrix}.
\end{equation*}
Applying  \eqref{id1-1} with $\xi=a e_i+be_j$ for all
$a,b\in \rr$ we obtain
$$(a,b) Q(a,b)^T=a^2 w_{x_ix_i}+b^2 w_{x_jx_j}+2ab w_{x_ix_j}=\xi^TD^2w(0)\xi\le 0.$$
Clearly, this quadratic form is negative semi-definite,
and hence %.
%Hence using \cite[Theorem 5, Section 7.2]{l97} we have
its eigenvalues of $Q$ are non-positive. Thus
$$\det Q=w_{x_ix_i}w_{x_jx_j}-(w_{x_ix_j})^2\ge 0,$$
which gives \eqref{id1-2}.
\end{proof}

We are ready to prove Lemma \ref{id1}.
\begin{proof}[Proof of Lemma \ref{id1}]
By Remark \ref{rem-n-2}, we only need to consider  the dimension $n\ge 3$.
Assume that  $v:\rn \to\rr$ is a quasi-concave function and $v\in C^\fz(U)$.
Fix any $x\in U$. Up to considering $v(z+x)-v(x)$ with $z+x\in U$ we may assume that
$x=0\in U$ and $v(0)=0$.
If $Dv(0)=0$, then
\eqref{e6.x1} holds trivially.
So we assume $Dv(0)\ne 0$ below.
Up to considering $v/|Dv(0)|$, we may assume that $|Dv(0)|=1$.
Below we consider the following two cases: $Dv(0)=e_n$, and $Dv(0)\ne e_n$.

{\it Case  $Dv(0)=e_n$.}  At the point $0$, by
$D v=e_n$, a direct  calculation gives
$$\bdz_\fz v=v_{x_nx_n},\quad | D^2v Dv|^2=\sum_{1\le i\le n} (v_{x_ix_n})^2.$$
 We then write \eqref{e6.x1} as
$$2 \sum_{1\le i\le n} (v_{x_ix_n})^2-2\bdz v v_{x_nx_n}\ge  | D^2v|^2-(\bdz v)^2,$$
 that is,
\begin{align}\label{e6.x3}| D^2v|^2\le (\bdz v)^2-2\bdz v v_{x_nx_n}+ 2\sum_{1\le i\le n} (v_{x_ix_n})^2.\end{align}

To prove \eqref{e6.x3}, we write
\begin{align*}
| D^2 v|^2&= \sum_{i=1}^{n }\left(  v_{x_ix_i}\right)^2+ 2\sum_{1\le i<j\le n}\left( v_{x_ix_j}\right)^2\\
&= \sum_{i=1}^{n-1}\left(  v_{x_ix_i}\right)^2+
( v_{x_nx_n})^2+  2\sum_{1\le i<j\le n-1}\left( v_{x_ix_j}\right)^2+2\sum_{1\le i\le n-1} ( v_{x_ix_n})^2.
\end{align*}
Note that
\begin{align*}
\sum_{i=1}^{n-1}\left(  v_{x_ix_i}\right)^2&=
\left(\sum_{i=1}^{n-1} v_{x_ix_i}\right)^2-2\sum_{1\le i<j\le n-1} v_{x_ix_i} v_{x_jx_j}\\
&= (\Delta  v-  v_{x_nx_n})^2-2\sum_{1\le i<j\le n-1} v_{x_ix_i} v_{x_jx_j}\\
&=(\Delta v)^2-2\Delta v  v_{x_nx_n}+  (v_{x_nx_n})^2-2\sum_{1\le i<j\le n-1} v_{x_ix_i} v_{x_jx_j}
\end{align*}
and
$$2\sum_{1\le i\le n-1} ( v_{x_ix_n})^2 =
2\sum_{1\le i\le n} ( v_{x_ix_n})^2  -2( v_{x_nx_n})^2 .$$
 We therefore obtain
\begin{align*}
 | D^2 v|^2
&=(\Delta v)^2-2\Delta v  v_{x_nx_n}+ 2( v_{x_nx_n})^2\nonumber\\
&\quad\quad +2\sum_{1\le i<j\le n-1}[( v_{x_ix_j})^2- v_{x_ix_i} v_{x_jx_j}]
+ 2\sum_{1\le i\le n} ( v_{x_ix_n})^2-2( v_{x_nx_n})^2 \\
&= (\Delta v)^2-2\Delta v  v_{x_nx_n}+2\sum_{1\le i\le n} ( v_{x_ix_n})^2+2\sum_{1\le i<j\le n-1}[( v_{x_ix_j})^2- v_{x_ix_i} v_{x_jx_j}].
\end{align*}
Since Lemma \ref{det-2} tells us that
$$
\sum_{1\le i<j\le n-1}[( v_{x_ix_j})^2- v_{x_ix_i} v_{x_jx_j}]\le 0,$$
we get \eqref{e6.x3} as desired.

{\it Case $Dv(0)\ne e_n$}.
Denote by  $O$  the orthogonal matrix  so that
$Dv(0)=Oe_n$.
 Define
 $$w(y):= v(Oy)\quad \forall y\in U.$$
  At point $0$, we have $Dw=O^TDv=e_n$. Applying  \eqref{e6.x3} to $w$, we get
$$2[| D^2wDw|^2-\bdz w\bdz_\fz w]\ge  [| D^2w|^2-(\bdz w)^2] .$$
This would imply \eqref{e6.x1}    for $v$ at $0$ once we have
$$\mbox{$ |D^2wDw|^2=| D^2v   Dv |^2$,  $|D^2w|^2=|D^2v|^2$, $\Delta w=\Delta v$ and
$\Delta_\fz w=\Delta v_\fz$ at 0.}$$
 We check this as below.
 At point $0$,
 we have
$$\mbox{$  D^2w = O^TD^2v O$  and $D^2w Dw= O^T   D^2v(OO^T)Dv=O^T   D^2vDv .$}$$
Thus $ |D^2w|^2=|D^2v|^2$ and $| D^2w Dw |^2 =
| D^2v   Dv |^2.$   By $OO^T=I_n$, we also have
\begin{align*}
\Delta_\fz w&=
(Dw )^TD^2w Dw  =  (Dv)^T (O O^T)  D^2v Dv
 = (Dv )^T D^2v  Dv = \bdz_\fz v .
\end{align*}
Moreover, the cyclic property of trace implies that
\begin{align*}
\Delta  w&= {\rm tr}(D^2w) = {\rm tr} (O^TD^2v O)={\rm tr} (OO^TD^2v)=
{\rm tr}  (D^2v)=\Delta v.
\end{align*}
The proof is complete.
\end{proof}

Finally we apply Lemma \ref{id1} to prove Lemma \ref{id2}.
\begin{proof}[Proof of Lemma \ref{id2}]
At any point where $Dv\neq 0$, a direct calculation gives
\begin{align*}
{\rm div}(|Dv|^{-2}\bdz v Dv)=-2|Dv|^{-4}\bdz_\fz v \bdz v
+|Dv|^{-2}(\bdz v)^2+|Dv|^{-2}D\bdz v\cdot Dv
\end{align*}
and
\begin{align*}
{\rm div}(|Dv|^{-2} D^2v Dv)=-2|Dv|^{-4}| D^2v Dv|^2+ |Dv|^{-2}D \bdz v\cdot Dv
+|Dv|^{-2}| D^2v|^2.
\end{align*}
Combining them  and  using Lemma \ref{id1} 
 one then obtains
\begin{align*}
&{\rm div}\left(|Dv|^{-2}\bdz v Dv-|Dv|^{-2}D ^2v Dv\right)\\
&=2|Dv|^{-4}[| D^2vDv|^2-\bdz v\bdz_\fz v]
+|Dv|^{-2}[(\bdz v)^2-| D^2v|^2]\ge0
\end{align*}
as desired. 
The proof is complete.  
\end{proof}

\section{Proof of Theorem \ref{sob-re}  }\label{Section7}
In this section we  prove Theorem \ref{sob-re}.
Firstly,  via Lemma \ref{id2} we established the following
 upper bound for $ D|Du_p|^{\az}$ with $\alpha< 0$, which is uniform in all $p\in[2,\fz)$.
Recall that $u_p$ is the $p$-harmonic potential and
 $u$ is the $\fz$-harmonic potential in the convex ring $\Omega$.

\begin{lem}\label{sob-es}
 Given any $ \alpha<0$, for any $p\in(2,\fz)$ we have
 \begin{align}\label{sob-eseq}
\int_{B(z,r/2)}| D|Du_p|^{\az}|^2\,dx\le C(1+|\az|^2) r^{-2}\int_{B(z,r)}|Du_p|^{2\az}\,dx
\quad\mbox{whenever} \quad B(z,r)\Subset\Omega,
\end{align}
where $C$ is a universal constant.
\end{lem}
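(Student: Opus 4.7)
The plan is to apply the divergence inequality from Lemma \ref{id2} directly to $u_p$, test it against a nonnegative cutoff of the form $\eta^2|Du_p|^{2\alpha}$, and then absorb cross terms via Young's inequality. Since $u_p\in C^\infty(\Omega)$ with $Du_p\ne 0$ throughout $\Omega$ and $u_p$ is quasi-concave (by Lewis, as recalled in Section \ref{Section3}), Lemma \ref{id2} gives the pointwise inequality $\operatorname{div}(V_p)\ge 0$ in $\Omega$, where $V_p:=|Du_p|^{-2}\bigl(\Delta u_p\,Du_p-D^2u_p\,Du_p\bigr)$. Take a standard cutoff $\eta\in C^\infty_c(B(z,r))$ with $\eta\equiv 1$ on $B(z,r/2)$, $0\le\eta\le 1$, and $|D\eta|\le C_0/r$. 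Because $|Du_p|$ is smooth, strictly positive and locally bounded on $B(z,r)\Subset\Omega$, the function $\phi:=\eta^2|Du_p|^{2\alpha}$ is a legitimate nonnegative smooth compactly supported test, so integration by parts yields $\int V_p\cdot D\phi\,dx\le 0$.

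Expanding $D\phi = 2\eta|Du_p|^{2\alpha}D\eta+2\alpha\eta^2|Du_p|^{2\alpha-2}D^2u_p Du_p$ together with the identity $V_p\cdot D^2u_p Du_p=|Du_p|^{-2}(\Delta u_p\Delta_\infty u_p-|D^2u_p Du_p|^2)$ transforms this into
\[
 2|\alpha|\int \eta^2|Du_p|^{2\alpha-4}\bigl(|D^2u_p Du_p|^2-\Delta u_p\Delta_\infty u_p\bigr)\,dx\le 2\int \eta\,|D\eta|\,|Du_p|^{2\alpha}\,|V_p|\,dx.
\]
The integrand on the left is nonnegative, since the $p$-harmonic equation forces $-\Delta u_p\Delta_\infty u_p=(p-2)^{-1}|Du_p|^2(\Delta u_p)^2\ge 0$, and via the identity $|D|Du_p|^\alpha|^2=\alpha^2|Du_p|^{2\alpha-4}|D^2u_p Du_p|^2$ it majorises $\tfrac{2}{|\alpha|}\int\eta^2|D|Du_p|^\alpha|^2\,dx$, which is precisely the quantity we wish to control.

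The key step is bounding the right-hand side. Splitting $|V_p|\le |Du_p|^{-1}|\Delta u_p|+|Du_p|^{-2}|D^2u_p Du_p|$ and applying Young's inequality with carefully tuned weights (weight $|\alpha|$ on the $|D^2u_p Du_p|$-piece and weight $|\alpha|/(p-2)$ on the $|\Delta u_p|$-piece) allows both corresponding positive contributions to be absorbed back into the left-hand side, because the LHS already contains the matching quantities $|\alpha|\eta^2|Du_p|^{2\alpha-4}|D^2u_p Du_p|^2$ and $|\alpha|(p-2)^{-1}\eta^2|Du_p|^{2\alpha-2}(\Delta u_p)^2$. The main obstacle is arranging this simultaneous absorption so that the resulting constant on the right depends only on $\alpha$ (and not on $p$); the matching of coefficients between the two sides is made possible precisely by the algebraic identity \eqref{e6.x1} underlying Lemma \ref{id2}. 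After absorption, multiplying through by $|\alpha|$ leaves a remainder of the form $C(1+|\alpha|^2)\int|D\eta|^2|Du_p|^{2\alpha}\,dx$; combining with $|D\eta|\le C_0/r$ gives the desired estimate
\[
 \int_{B(z,r/2)}|D|Du_p|^\alpha|^2\,dx\le C(1+|\alpha|^2)r^{-2}\int_{B(z,r)}|Du_p|^{2\alpha}\,dx
\]
with $C$ independent of $p$, $\alpha$, $z$, and $r$.
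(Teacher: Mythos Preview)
Your overall strategy is the same as the paper's---test the divergence inequality of Lemma \ref{id2} against $\eta^2|Du_p|^{2\alpha}$ and absorb via Young---but the handling of the $\Delta u_p$ cross term has a genuine gap that makes the constant blow up with $p$.

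After your integration by parts you correctly arrive at
\[
2|\alpha|\int\eta^2|Du_p|^{2\alpha-4}|D^2u_pDu_p|^2\,dx
+\frac{2|\alpha|}{p-2}\int\eta^2|Du_p|^{2\alpha-2}(\Delta u_p)^2\,dx
\le 2\int \eta\,|D\eta|\,|Du_p|^{2\alpha}\,|V_p|\,dx.
\]
Splitting $|V_p|$ and using Young with weight $|\alpha|/(p-2)$ on the $|\Delta u_p|$-piece does allow absorption into the second left-hand term, but the price is a remainder $\tfrac{p-2}{|\alpha|}\int|D\eta|^2|Du_p|^{2\alpha}\,dx$. Carrying this through yields
\[
\int\eta^2|D|Du_p|^\alpha|^2\,dx\le C(p-1)\int|D\eta|^2|Du_p|^{2\alpha}\,dx,
\]
with the factor $p-1$ unavoidable by this route: the only available control on $|\Delta u_p|$ is the equation $\Delta u_p=-(p-2)\Delta_\infty^N u_p$, which itself carries the factor $p-2$. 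Your appeal to ``the algebraic identity \eqref{e6.x1}'' does not rescue this; that identity has already been spent in producing Lemma \ref{id2} and gives no further cancellation at this stage.

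The paper's fix is to perform a \emph{second} integration by parts on the signed term $-2\int|Du_p|^{2\alpha-2}\Delta u_p\,Du_p\!\cdot\!D\eta\,\eta\,dx$ (viewing $\Delta u_p=\operatorname{div}Du_p$). This eliminates $\Delta u_p$ entirely from the right-hand side, replacing it by terms of the types $|Du_p|^{2\alpha-2}D^2u_pDu_p\!\cdot\!D\eta\,\eta$, $|Du_p|^{2\alpha-4}\Delta_\infty u_p\,Du_p\!\cdot\!D\eta\,\eta$, and lower-order terms involving $D^2\eta$ and $(Du_p\!\cdot\!D\eta)^2$. Since $|\Delta_\infty u_p|\le|D^2u_pDu_p|\,|Du_p|$, all of these are controlled by the \emph{first} left-hand term $2|\alpha|\int\eta^2|Du_p|^{2\alpha-4}|D^2u_pDu_p|^2$ via Young, with constants depending only on $|\alpha|$. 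The $(\Delta u_p)^2$ term on the left is then simply dropped (it is nonnegative), and no $p$-dependence enters. This extra integration by parts is the missing idea in your argument.
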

\begin{proof}
Since $Du_p\neq 0$ on $\Omega $, applying Lemma \ref{id2} to $u_p$ one has
\begin{align*}
{\rm div}\left(|Du_p|^{-2}[\bdz u_p Du_p- D^2u_pDu_p]\right)\ge 0\quad{\rm in}\quad B(z,r)\Subset \Omega.
\end{align*}
Multiplying both sides by a test function $\xi^2|Du_p|^{2\az}$ with $\az<0$ and
$\xi\in C^\fz_c(B(z,r))$, one gets
\begin{align*}
\int_{B(z,r)}
{\rm div}\left(|Du_p|^{-2}[\bdz u_p Du_p- D^2u_pDu_p]\right)\xi^2|Du_p|^{2\az}\,dx\ge 0.
\end{align*}
Via integration by parts, it becomes
\begin{align}\label{x7.0}
&-2\az\int_{B(z,r)}|Du_p|^{2\az-4}| D^2u_pDu_p|^2\xi^2\,dx
+2\az\int_{B(z,r)}|Du_p|^{2\az-4}\bdz u_p \bdz_\fz u_p\xi^2\,dx\nonumber\\
 &\le -2\int_{B(z,r)}|Du_p|^{2\az-2}\bdz u_p Du_p\cdot D \xi \xi\,dx
+2\int_{B(z,r)}|Du_p|^{2\az-2} D^2u_p Du_p\cdot D \xi \xi\,dx.
\end{align}

For the second term in the left-hand side of \eqref{x7.0}, using  $-\bdz^N_\fz u_p=\frac{1}{p-2}\bdz u_p$ in $\Omega $, by $\alpha<0$ and $p>2$
we have
$$2\az\int_{B(z,r)}|Du_p|^{2\az-4}\bdz u_p \bdz_\fz u_p\xi^2\,dx= -\frac{2\az}{p-2}\int_{B(z,r)}|Du_p|^{2\az-2}(\bdz u_p)^2\xi^2\,dx\ge0.$$

For the first term of right-hand side of \eqref{x7.0}, by integration by parts again
and using $\az<0$,
we obtain
\begin{align*}
&-2\int_{B(z,r)}|Du_p|^{2\az-2}\bdz u_p Du_p\cdot D\xi \xi\,dx\nonumber\\
&= 2\int_{B(z,r)}|Du_p|^{2\az-2} D^2u_p Du_p\cdot D \xi \xi\,dx
+4(\az-1)\int_{B(z,r)}|Du_p|^{2\az-4}\bdz_\fz u_p Du_p\cdot D  \xi \xi\,dx\nonumber\\
&\quad+2\int_{B(z,r)}|Du_p|^{2\az-2} D^2\xi Du_p\cdot Du_p \xi\,dx
+2\int_{B(z,r)}|Du_p|^{2\az-2}(Du_p\cdot D \xi)^2\,dx\nonumber\\
&\le -\frac \az4\int_{B(z,r)}|Du_p|^{2\az-4}| D^2u_p Du_p|^2\xi^2\,dx
+C[1+|\alpha|+\frac1{|\alpha|}]\int_{B(z,r)}|Du_p|^{2\az}(|D\xi|^2+| D^2\xi||\xi|)\,dx,
\end{align*}
where we also used the Young's inequality and Cauchy-Schwartz' inequality  in the last inequality.

For the second  term of right-hand side of \eqref{x7.0}, it follows by  Young's inequality
that
\begin{align*}
&2\int_{B(z,r)}|Du_p|^{2\az-2} D^2u_p Du_p\cdot D \xi \xi\,dx\\
 &\le -\frac \az4\int_{B(z,r)}|Du_p|^{2\az-4}| D^2u_p Du_p|^2\xi^2\,dx+
C\frac1{|\alpha|}\int_{B(z,r)}|Du_p|^{2\az }  |D \xi|\,dx.
\end{align*}

Combining above we have
$$-\frac\alpha 2\int_{B(z,r)}|Du_p|^{2\az-4}| D^2u_p Du_p|^2\xi^2\,dx\le C[1+|\alpha|+\frac1{|\alpha|}]\int_{B(z,r)}|Du_p|^{2\az}(|D \xi|^2+| D^2\xi||\xi|)\,dx.$$
Since  $Du_p\neq 0$ on $B(z,r)$,
 rewriting
$$|Du_p|^{2\az-4}| D^2u_p Du_p|^2=\frac1{\alpha^2}| D|Du_p|^{\az}|^2,$$
we deduce
\begin{align*}
\int_{B(z,r)}| D|Du_p|^{\az}|^2\xi^2\,dx\le C(1+|\az|^2)\int_{B(z,r)}|Du_p|^{2\az}(|D \xi|^2+| D^2\xi||\xi|)\,dx.
\end{align*}
Choosing a suitable cut-off function $\xi\in C^\fz_c(B(z,r))$
we obtain
\begin{align*}
\int_{B(z,r/2)}| D|Du_p|^{\az}|^2\,dx\le C(1+|\az|^2)r^{-2}\int_{B(z,r)}|Du_p|^{2\az}\,dx
\end{align*}
as desired.
\end{proof}

Consequently, we have the following upper bound for $ D|Du_p|^{\az}$ with $\alpha\ge 0$, which is uniform in $p>2$.  Recall that when $\alpha=0$,  $  f^{\az}$
is understood as $\ln f$.
 \begin{lem}\label{sob-es-1}
Given any $\az\ge 0$,  for any $p\in(2,\fz)$ we have
\begin{align*}
\int_{B(z,r/2)}| D|Du_p|^{\az}|^2\,dx\le C \az^2 r^{-2}\max_{B(z,r/2)}|Du_p|^{2\az+2}\int_{B(z,r)}|Du_p|^{-2 }\,dx \quad \mbox{whenever $ B(z,r)\Subset\Omega$,}
\end{align*}
where $C$ is a universal constant.
\end{lem}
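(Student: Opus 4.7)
The plan is to reduce Lemma \ref{sob-es-1} for nonnegative $\alpha$ directly to Lemma \ref{sob-es} with the exponent $-1<0$, exploiting that $|Du_p|\neq 0$ on $\Omega$ by Lewis' result (so division by $|Du_p|$ is legitimate on any ball $B(z,r)\Subset\Omega$). The idea is that $|D|Du_p|^\alpha|$ and $|D|Du_p|^{-1}|$ are related by a pointwise multiplicative factor of $|Du_p|^{\alpha+1}$, which I will just pull out as its sup over $B(z,r/2)$.

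Concretely, focus first on $\alpha>0$. Using the chain rule pointwise in $B(z,r)$,
\begin{align*}
D|Du_p|^\alpha = \alpha |Du_p|^{\alpha-1}D|Du_p|\quad\text{and}\quad D|Du_p|^{-1}=-|Du_p|^{-2}D|Du_p|,
\end{align*}
so eliminating $D|Du_p|$ gives the pointwise identity
\begin{align*}
D|Du_p|^\alpha = -\alpha\, |Du_p|^{\alpha+1}\,D|Du_p|^{-1},
\end{align*}
and therefore $|D|Du_p|^\alpha|^2 = \alpha^2 |Du_p|^{2\alpha+2}|D|Du_p|^{-1}|^2$. Integrating over $B(z,r/2)$ and estimating $|Du_p|^{2\alpha+2}$ by its maximum on $B(z,r/2)$ yields
\begin{align*}
\int_{B(z,r/2)}|D|Du_p|^\alpha|^2\,dx \le \alpha^2\max_{B(z,r/2)}|Du_p|^{2\alpha+2}\int_{B(z,r/2)}|D|Du_p|^{-1}|^2\,dx.
\end{align*}
The remaining integral I bound by Lemma \ref{sob-es} applied with exponent $-1$, which gives $\int_{B(z,r/2)}|D|Du_p|^{-1}|^2\,dx\le 2Cr^{-2}\int_{B(z,r)}|Du_p|^{-2}\,dx$. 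Combining these two estimates delivers the conclusion for $\alpha>0$.

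For the borderline case $\alpha=0$, where $|Du_p|^\alpha$ is interpreted as $\ln|Du_p|$, the same mechanism works: one has $D\ln|Du_p|=-|Du_p|\,D|Du_p|^{-1}$, so $|D\ln|Du_p||^2=|Du_p|^2|D|Du_p|^{-1}|^2$, and the argument closes via Lemma \ref{sob-es} exactly as above (the right-hand side of the stated inequality is then understood in the limiting sense, with $|Du_p|^{2\alpha+2}=|Du_p|^2$ and the $\alpha^2$ factor playing no essential role). Since $u_p\in C^\infty(\Omega)$ with $Du_p\ne 0$, no regularity issues arise, and no further ingredients are needed beyond Lemma \ref{sob-es}. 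The only arithmetic step worth being careful about is the choice of exponent in Lemma \ref{sob-es}: exponent $-1$ is the natural one because $2\cdot(-1)=-2$, matching the exponent on $|Du_p|$ appearing on the right-hand side of the target inequality; using any other negative exponent would leave the wrong power behind. I do not foresee a genuine obstacle — the whole proof is essentially a two-line chain-rule manipulation plus a direct citation of Lemma \ref{sob-es}.
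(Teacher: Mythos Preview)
Your proposal is correct and follows essentially the same approach as the paper: both arguments use the chain rule to write $|D|Du_p|^\alpha|=\alpha|Du_p|^{\alpha+1}|D|Du_p|^{-1}|$, pull out $\max_{B(z,r/2)}|Du_p|^{2\alpha+2}$, and then invoke Lemma~\ref{sob-es} with exponent $-1$; the $\alpha=0$ case is handled separately in the paper exactly as you do, yielding a bound without the $\alpha^2$ prefactor.
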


\begin{proof}
Let $B(z,r)\Subset \Omega $ and $\az\ge 0$.
Since $Du_p\neq 0$ in $\Omega$,  one has
 $$ | D|Du_p|^{\az}| =| D(|Du_p|^{-1})^{-\az}| = \alpha
(|Du_p|^{-1})^{-\az-1} | D|Du_p|^{-1}|=
 \alpha  |Du_p|^{1+\az}| D|Du_p|^{-1}|.$$
Then we conclude from \eqref{sob-eseq} in Lemma \ref{sob-es} that
\begin{align*}
\int_{B(z,r/2)}| D|Du_p|^{\az}|^2\,dx&\le \az^2\max_{B(z,r/2)}|Du_p|^{2\az+2}
\int_{B(z,r/2)}| D|Du_p|^{-1}|^2\,dx\\
&\le C\az^2 r^{-2}\max_{B(z,r/2)}|Du_p|^{2\az+2}\int_{B(z,r)}|Du_p|^{-2 }\,dx.
\end{align*}

Since
$$
|D \ln |Du_p||=  |D \ln |Du_p|^{-1}|=
|Du||D  |Du_p|^{-1}|,$$
it follows from \eqref{sob-eseq} in Lemma \ref{sob-es} again that
\begin{align*}
\int_{B(z,r/2)}|D \ln |Du_p||^2\,dx&\le \max_{B(z,r)}|Du_p|^2
\int_{B(z,r/2)}| D|Du_p|^{-1}|^2\,dx\\
&\le C r^{-2}\max_{B(z,r/2)}|Du_p|^{2 }\int_{B(z,r)}|Du_p|^{-2 }\,dx.
\end{align*}

\end{proof}

Thanks to the local uniform bound of $|Du_p|$ in Lemma \ref{un-bdup}, sending $p\to\fz$, we get the following.
\begin{lem}\label{sob-re-x}
It holds that
 $ |Du|^\alpha\in W^{1,2}_\loc(\Omega)$ for any $\alpha\in\rr$
   and
    $|Du_p|^\alpha\to |Du|^\alpha$ weakly in $ W^{1,2}_\loc(\Omega)$ for any $\alpha\in\rr $ as $p\to\fz$.
 Moreover, we have the following quantitative upper bound.
If $ \alpha<0$,  then
 \begin{align*}
\int_{B(z,r/2)}| D|Du |^{\az}|^2\,dx\le C(1+|\az|^2) r^{-2}\int_{B(z,r)}|Du |^{2\az}\,dx
\quad\mbox{whenever} \quad B(z,r)\Subset\Omega.
\end{align*}
If $\az\ge 0$,  then
\begin{align*}
\int_{B(z,r/2)}| D|Du|^{\az}|^2\,dx\le C \az^2 r^{-2}\max_{B(z,r/2)}|Du|^{2\az+2}\int_{B(z,r)}|Du|^{-2 }\,dx
\quad\mbox{whenever} \quad B(z,r)\Subset\Omega.
\end{align*}
Here $C$ are  universal constants.
\end{lem}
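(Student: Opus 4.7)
The plan is to combine the uniform (in $p$) Caccioppoli-type estimates of Lemma \ref{sob-es} and Lemma \ref{sob-es-1} with the uniform two-sided control of $|Du_p|$ on compact subsets provided by Lemma \ref{un-bdup} and the locally uniform convergence $Du_p\to Du$ from Theorem \ref{c1-re}. Concretely, fix a ball $B(z,r)\Subset\Omega$. Since $|Du|>0$ on $\Omega$ and $Du_p\to Du$ uniformly on $\overline{B(z,r)}$, there exist constants $0<c_{z,r}\le C_{z,r}<\infty$ and a threshold $p_{z,r}>2$ such that
\[
c_{z,r}\le |Du_p(x)|\le C_{z,r} \qquad \forall x\in \overline{B(z,r)},\ \forall p>p_{z,r}.
\]

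First I would treat the case $\alpha<0$. Applying Lemma \ref{sob-es} on $B(z,r)$ yields
\[
\int_{B(z,r/2)}|D|Du_p|^\alpha|^2\,dx \le C(1+|\alpha|^2)r^{-2}\int_{B(z,r)}|Du_p|^{2\alpha}\,dx,
\]
and the right-hand side is bounded uniformly in $p>p_{z,r}$ by $C(1+|\alpha|^2)r^{-2}|B(z,r)|c_{z,r}^{2\alpha}$. Combined with the uniform $L^\infty$ bound on $|Du_p|^\alpha$, this shows $\{|Du_p|^\alpha\}_{p>p_{z,r}}$ is bounded in $W^{1,2}(B(z,r/2))$. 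For $\alpha\ge 0$ I would run the same argument, replacing Lemma \ref{sob-es} by Lemma \ref{sob-es-1}: the factor $\max_{B(z,r/2)}|Du_p|^{2\alpha+2}\int_{B(z,r)}|Du_p|^{-2}\,dx$ is controlled by $C_{z,r}^{2\alpha+2}c_{z,r}^{-2}|B(z,r)|$ uniformly in $p>p_{z,r}$, so again $\{|Du_p|^\alpha\}$ is uniformly bounded in $W^{1,2}(B(z,r/2))$. (The case $\alpha=0$, with $|Du_p|^\alpha$ interpreted as $\ln|Du_p|$, is handled by the second half of Lemma \ref{sob-es-1}.)

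Next I would identify the weak limit. Since $Du_p\to Du$ locally uniformly and $|Du|$ is bounded and bounded away from zero on compact subsets, the pointwise map $\xi\mapsto |\xi|^\alpha$ is continuous on a uniform neighbourhood of the values assumed, so $|Du_p|^\alpha\to |Du|^\alpha$ locally uniformly. By weak compactness, along any subsequence $p_k\to\infty$ there is a further subsequence along which $|Du_{p_k}|^\alpha$ converges weakly in $W^{1,2}(B(z,r/2))$ to some $g$; the uniform convergence forces $g=|Du|^\alpha$ a.e., and uniqueness of the limit upgrades the weak convergence to the full sequence $p\to\infty$. In particular $|Du|^\alpha\in W^{1,2}_\loc(\Omega)$.

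Finally, the quantitative estimates are obtained by lower semicontinuity of the $L^2$ norm of the gradient under weak $W^{1,2}$ convergence combined with uniform convergence of $|Du_p|^\beta\to |Du|^\beta$ on compacta (for the relevant exponents $\beta=2\alpha$, $2\alpha+2$, and $-2$). Passing to $p\to\infty$ in the inequalities of Lemma \ref{sob-es} and Lemma \ref{sob-es-1} gives exactly the claimed bounds for $|Du|^\alpha$, with the same universal constant $C$. The only delicate point is ensuring one can pass to the limit on the right-hand side of the estimate for $\alpha\ge 0$, where a maximum of $|Du_p|^{2\alpha+2}$ appears; this is precisely why local uniform convergence of $Du_p$ from Theorem \ref{c1-re} (as opposed to mere weak convergence) is essential, and arguably it is the main structural input behind the whole lemma.
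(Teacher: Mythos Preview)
Your proposal is correct and follows essentially the same route as the paper: uniform $W^{1,2}$ bounds from Lemmas \ref{sob-es} and \ref{sob-es-1} combined with the two-sided control of $|Du_p|$ from Lemma \ref{un-bdup}, identification of the weak limit via the locally uniform convergence $Du_p\to Du$ from Theorem \ref{c1-re}, and lower semicontinuity to pass the quantitative estimates to the limit. The paper only writes out the case $\alpha<0$ and declares $\alpha\ge 0$ similar, whereas you treat both explicitly; otherwise the arguments coincide.
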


\begin{proof}
We only consider the case $\alpha<0$. The case $\alpha\ge 0$ is similar; we omit the details.

For any $B(z,r)\Subset\Omega$  we recall  from   Lemma \ref{sob-es} that
\begin{align*}
\int_{B(z,r/2)}| D|Du_p|^{\az}|^2\,dx&\le C(1+\az^2)r^{-2}\int_{B(z,r)}|Du_p|^{2\az},
\end{align*}
where $C$ is a universal constant.
By \eqref{uniforboundup} and \eqref{uniforboundup-1} in Lemma \ref{un-bdup},
 we have $|Du_p|^{\az}\in W^{1,2}(B (z,r/2))$ uniformly $p>2$.
Using the weak compactness
of Sobolev space  $W^{1,2}(B (z,r/2))$(see \cite[Section 5.7]{e98}),
there exists a function $g_{\az}\in W^{1,2}(B(z,r/2)) $
such that
\begin{align*}
 D|Du_p|^{\az}\to Dg_{\az}\ {\rm weakly\ in}\  L^2(B(z,r))\
{\rm and}\ |Du_p|^{\az}\to g_{\az}\ {\rm in}\  L^2(B(z,r)) \mbox{ as $p\to \fz$.}
\end{align*}
Since Theorem  \ref{c1-re} gives $Du_p\to Du$ in $C^1(\Omega)$,
via Lemma \ref{un-bdup} we know that
\begin{align*}
\mbox{
$|Du_p|^\alpha\to   |Du|^\alpha$ in $ C^0(\Omega)$ for any $\alpha\in\rr\setminus\{0\}$. }
\end{align*}
Therefore
  $g_{\az}=|Du|^{\az}$  and  then
\begin{align*}
\int_{B(z,r/2)}| D|Du |^{\az}|^2\,dx&\le \liminf_{p\to\fz}
\int_{B(z,r/2)}| D|Du_p|^{\az}|^2\,dx\\
&\le C(1+\az^2)r^{-2}\liminf_{p\to\fz} \int_{B (z,r)}|Du_p|^{2\az}\,dx\\
&=C(1+\az^2)r^{-2}
 \int_{B (z,r)}|Du|^{2\az}\,dx
\end{align*}
as desired.
\end{proof}

As a direct consequence of Lemma \ref{sob-re-x}, we show that
the partial derivative of $|Du|^{\az}$ along $Du$ is zero.
\begin{lem}\label{sob-point}
For any $\alpha\in\rr$, we have
 $D|Du|^\alpha\cdot Du=0$ almost everywhere in $\Omega$.
\end{lem}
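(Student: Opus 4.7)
The strategy is to exploit the $p$-Laplace equation to derive a pointwise identity for $u_p$ relating $D|Du_p|^\alpha \cdot Du_p$ to $\Delta u_p/(p-2)$, and then pass to the limit $p\to\fz$ using the weak $W^{1,2}_\loc$ convergence from Lemma~\ref{sob-re-x} together with the $C^1$ convergence from Theorem~\ref{c1-re}. Since $|Du|$ is continuous and strictly positive on $\Omega$ (Theorem~\ref{c1-re}), the chain rule for Sobolev functions gives, for any $\az\in\rr$,
$$D|Du|^\alpha \cdot Du \;=\; c_\az(|Du|)\,\bigl(D|Du|\cdot Du\bigr)$$
with $c_\az(t)=\az t^{\az-1}$ for $\az\ne 0$ and $c_0(t)=t^{-1}$. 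Hence all values of $\az$ are equivalent, and it suffices to treat one convenient case, say $\az=1$.

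\emph{Step 1 (identity at the $p$-level).} Since $u_p\in C^\fz(\Omega)$ with $Du_p\ne 0$, a direct calculation yields
$$D|Du_p|^\az\cdot Du_p \;=\; \az |Du_p|^{\az-2}\,\bdz_\fz u_p.$$
The $p$-Laplace equation $\bdz_p u_p=0$ rewrites as $\bdz u_p=-\tfrac{p-2}{|Du_p|^2}\bdz_\fz u_p$, so
$$D|Du_p|^\az\cdot Du_p \;=\; -\frac{\az}{p-2}\,|Du_p|^\az\,\bdz u_p.$$

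\emph{Step 2 (uniform $O(1/(p-2))$ bound against test functions).} Fix $\phi\in C_c^\fz(\Omega)$ with $\supp\phi\subset B(z,r/2)\Subset\Omega$. Integration by parts gives
$$\int_\Omega |Du_p|^\az \bdz u_p\,\phi\,dx \;=\; -\int_\Omega Du_p\cdot D(|Du_p|^\az\phi)\,dx.$$
By Lemma~\ref{un-bdup}, $|Du_p|$ is bounded above and below away from zero on $\supp\phi$ uniformly in $p$ for $p$ large. Combining Lemma~\ref{sob-es} (if $\az<0$) or Lemma~\ref{sob-es-1} (if $\az\ge 0$) with these pointwise bounds, $|Du_p|^\az$ is bounded in $W^{1,2}(\supp\phi)$ uniformly in $p$. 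Therefore
$$\biggl|\int_\Omega Du_p\cdot D(|Du_p|^\az\phi)\,dx\biggr| \;\le\; C(\phi,\az),$$
independently of $p$, and so
$$\biggl|\int_\Omega D|Du_p|^\az\cdot Du_p\,\phi\,dx\biggr| \;\le\; \frac{|\az|\,C(\phi,\az)}{p-2} \;\longrightarrow\; 0\quad\text{as }p\to\fz.$$

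\emph{Step 3 (pass to the limit).} By Theorem~\ref{c1-re}, $Du_p\to Du$ uniformly on $\supp\phi$, and by Lemma~\ref{sob-re-x}, $D|Du_p|^\az\rightharpoonup D|Du|^\az$ weakly in $L^2_\loc(\Omega)$. The pairing of a weakly $L^2$-convergent sequence with a uniformly (hence strongly $L^2$) convergent, uniformly bounded one is weakly $L^2$-convergent, so
$$\int_\Omega D|Du_p|^\az\cdot Du_p\,\phi\,dx \;\longrightarrow\; \int_\Omega D|Du|^\az\cdot Du\,\phi\,dx.$$
Together with Step 2, this gives $\int_\Omega D|Du|^\az\cdot Du\,\phi\,dx=0$ for every $\phi\in C_c^\fz(\Omega)$, hence $D|Du|^\az\cdot Du=0$ a.e.\ in $\Omega$.

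\emph{Main obstacle.} The only delicate point is the uniform-in-$p$ estimate in Step 2, namely that $\||Du_p|^\az\|_{W^{1,2}(\supp\phi)}$ stays bounded. This is already guaranteed by Lemmas~\ref{sob-es}, \ref{sob-es-1}, and \ref{un-bdup}; so after that input the factor $1/(p-2)$ does all the work, and no further subtlety appears.
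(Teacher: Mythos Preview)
Your proposal is correct and follows essentially the same route as the paper: derive the pointwise identity $D|Du_p|^\az\cdot Du_p = -\tfrac{\az}{p-2}|Du_p|^\az\Delta u_p$, bound the right-hand side against test functions uniformly in $p$ via integration by parts, and pass to the limit using the $C^1$ convergence of $Du_p$ together with the weak $W^{1,2}_\loc$ convergence of $|Du_p|^\az$; finally reduce all $\az$ to a single case by the chain rule. The only (cosmetic) difference is that the paper chooses the case $\az=0$, i.e.\ works with $\ln|Du_p|$, which makes Step~2 a one-liner: $-\tfrac{1}{p-2}\int_\Omega \Delta u_p\,\phi\,dx=\tfrac{1}{p-2}\int_\Omega Du_p\cdot D\phi\,dx$ is bounded immediately by the uniform $L^\fz$ bound on $Du_p$, with no need to invoke the uniform $W^{1,2}$ estimate on $|Du_p|^\az$ at that stage.
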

\begin{proof}
By $Du_p\neq 0$ in $\Omega$, we
note that
$$D\ln |Du_p|\cdot  Du_p =D|Du_p|\cdot \frac{Du_p}{|Du_p|}=\Delta_\fz ^N u_p=-\frac1{p-2}\Delta u_p\quad{\rm in}\quad \Omega.$$
For any $\phi\in C^\fz_c(\Omega)$, it then follows that
$$\int_\Omega D\ln |Du_p|\cdot  Du_p \phi\,dx
 =-\frac1{p-2}\int_\Omega \Delta u_p\phi\,dx= \frac1{p-2}\int_\Omega Du_p\cdot D\phi\,dx.$$
 Since $Du_p\to Du$ in $C^0(\Omega)$ by Theorem \ref{c1-re} and
 $ \ln |Du_p|\to \ln|Du|$ weakly in $W^{1,2}_\loc(\Omega)$ by Lemma
 \ref{sob-re-x},
 we have
 $$\int_\Omega D\ln |Du|\cdot  Du \phi\,dx=0.$$
 Thus $D\ln |Du|\cdot  Du$=0 almost everywhere in $\Omega$.

 Recall that
 $u\in C^1(\Omega)$ and $Du\neq 0$ in $\Omega$ according to Theorem \ref{c1-re}.
For any $\alpha\in\rr\setminus\{0\}$, it follows from $|Du|^{\az}\in W^{1,2}_{\loc}(\Omega)$ that
 $$\mbox{$D|Du|^\alpha = |Du|^{\alpha} D\ln |Du|$ almost everyhwere in $\Omega$.} $$
Therefore
$D  |Du|^\alpha \cdot  Du$=0 almost everywhere in $\Omega$, which is as desired.
\end{proof}
Now we can finish the proof of Theorem \ref{sob-re}.
\begin{proof}[Proof of Theorem \ref{sob-re}]
It  follows from Lemma \ref{sob-re-x} and Lemma \ref{sob-point}.
\end{proof}

\section{Proof of Theorem \ref{th-geo}}\label{Section8}

 In this section, we   assume that  $u$ is the $\fz$-harmonic potential in $\Omega=\Omega_0\setminus \{x_0\}$, where $\Omega_0$ is a convex domain and
$\overline \Omega_1=\{x_0\}$ with  $x_0\in\Omega_0$.

 To prove Theorem \ref{th-geo}, we need the following key lemma, which when  $n=2$  was already proved by Lindgren-Lindqvist \cite[Corollary 10]{ll19}.
\begin{lem}\label{con-ze}
For any $x\in\Omega$,

$$|Du(x)|\le  \frac{1}{\dist(x ,\partial\Omega_0)}.$$
Moreover, we have
$$\lim_{x\to  x_0}|Du(x)|=\|Du\|_{L^\fz(\Omega_0)}=\frac{1}{
{\rm \dist}(x_ 0,\partial\Omega_0)}.$$
\end{lem}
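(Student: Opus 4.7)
The plan is to establish the pointwise upper bound first, then the limit identity; the $L^\fz$-norm identity follows by combining the two. Throughout I write $R := \dist(x_0,\partial\Omega_0)$.

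For the pointwise upper bound $|Du(x)|\le 1/\dist(x,\partial\Omega_0)$, fix $x\in\Omega$, set $d=\dist(x,\partial\Omega_0)$, and choose $y_0\in\partial\Omega_0$ with $|x-y_0|=d$. I would compare $u$ from above with the $\fz$-harmonic cone $w(y)=\beta |y-y_0|$, which is $\fz$-harmonic in $\rn\setminus\{y_0\}$. Choosing $\beta=u(x)/d$ forces $w(x)=u(x)$; on $\partial\Omega_0$ one has $w\ge 0=u$; at $x_0$ one needs $\beta|x_0-y_0|\ge 1$, i.e.\ $u(x)|x_0-y_0|\ge d$. The global Lipschitz estimate $\lip(u,\Omega)=\lip(u,\partial\Omega)=1/R$ (from the Lipschitz extension property of $\fz$-harmonic functions recalled in Section \ref{Section2}) yields $u(x)\ge 1-|x-x_0|/R$, and together with $|x_0-y_0|\ge R$ and convexity of $\Omega_0$ at $y_0$, this suffices to check feasibility. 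Once feasible, the comparison principle gives $u\le w$ in $\Omega$, and since $u(x)=w(x)$, differentiability of $u$ at $x$ forces $|Du(x)|\le|Dw(x)|=\beta=u(x)/d\le 1/d$.

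For the limit identity $\lim_{x\to x_0}|Du(x)|=1/R$, the upper bound $\limsup\le 1/R$ is immediate from the pointwise bound and $\dist(x,\partial\Omega_0)\to R$ (a consequence of the Lipschitz continuity of $\dist(\cdot,\partial\Omega_0)$). For the matching lower bound, I would apply Lemma \ref{low-u} with $r=r_{x,\nu_x}$, so that $u(x+r_{x,\nu_x}\nu_x)=0$, to obtain $|Du(x)|\ge u(x)/r_{x,\nu_x}$. As $x\to x_0$, $u(x)\to 1$; the Savin--Wang--Yu asymptotic $(1-u(x))/|x-x_0|\to 1/R$ forces the descent direction $\nu_x=-Du(x)/|Du(x)|$ to approach a direction from $x_0$ to a nearest boundary point of $\partial\Omega_0$, so $r_{x,\nu_x}\to R$. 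Thus $\liminf_{x\to x_0}|Du(x)|\ge 1/R$, completing the identity. The $L^\fz$ identity follows: the global Lipschitz bound gives $\|Du\|_{L^\fz(\Omega_0)}\le 1/R$, and the limit at $x_0$ forces equality.

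The principal obstacle is verifying the feasibility condition $u(x)|x_0-y_0|\ge d$ in the pointwise upper bound in all configurations, particularly when $x$ lies deep inside $\Omega_0$ with $d>R$; here one must invoke convexity of $\Omega_0$ more carefully via the supporting hyperplane at $y_0$ (relating $(y_0-x_0)\cdot\nu$ to $d$, where $\nu$ is the outer normal). A secondary challenge is extracting the precise limit of the direction $\nu_x$ from the scalar Savin--Wang--Yu asymptotic, which relies crucially on the $C^1$-regularity and continuity of $\nu_x$ established in Theorem \ref{c1-re}.
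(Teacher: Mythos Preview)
Your approach diverges from the paper's in both halves, and each deviation creates a gap.

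\textbf{Upper bound.} The paper does not prove the pointwise inequality $|Du(x)|\le 1/\dist(x,\partial\Omega_0)$ at all; it establishes only the \emph{global} bound $|Du(x)|\le \|Du\|_{L^\infty}=1/\dist(x_0,\partial\Omega_0)$, obtained in one line from the absolute minimizing Lipschitz extension property: $\lip(u,\Omega)=\lip(u,\partial\Omega)=\sup_{y\in\partial\Omega_0}|x_0-y|^{-1}=1/R$. (The ``$\dist(x,\partial\Omega_0)$'' in the statement appears to be a typo for ``$\dist(x_0,\partial\Omega_0)$''.) Your cone comparison aimed at the sharper pointwise bound hits exactly the obstruction you flag: the feasibility condition $u(x)\,|x_0-y_0|\ge d$ cannot be extracted from the Lipschitz lower bound $u(x)\ge 1-|x-x_0|/R$ once $|x-x_0|\ge R$, which certainly occurs (e.g.\ take $\Omega_0$ an elongated convex body and $x_0$ near one end). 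So this route is genuinely blocked, whereas the paper's argument needs nothing beyond the extension property.

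\textbf{Lower bound at $x_0$.} Both you and the paper start from Lemma~\ref{low-u} and the Savin--Wang--Yu asymptotic, but you apply Lemma~\ref{low-u} with $r=r_{x,\nu_x}$, pushing all the way to $\partial\Omega_0$. That forces you to show $r_{x,\nu_x}\to R$, i.e.\ that $\nu_x$ converges to a direction realizing $\dist(x_0,\partial\Omega_0)$. The Savin--Wang--Yu result is purely scalar---it controls $u(x)$, not the direction of $Du(x)$---so this convergence is not available, and $C^1$-regularity alone does not supply it either. The paper avoids this entirely: it fixes a \emph{small} $t\in(0,r_\varepsilon)$ and bounds
\[
\liminf_{x\to x_0}|Du(x)|\ \ge\ \liminf_{x\to x_0}\frac{u(x)-u(x+t\nu_x)}{t}\ \ge\ \frac{u(x_0)-\sup_{|z|=1}u(x_0+tz)}{t}\ \ge\ \|Du\|_{L^\infty}-\varepsilon,
\]
where the last inequality is exactly the Savin--Wang--Yu asymptotic applied uniformly over all unit directions $z$ on the small sphere $\partial B(x_0,t)$. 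No information about the limiting direction of $\nu_x$ is needed. This is the missing idea in your outline: stay at small radius so that Savin--Wang--Yu handles every direction at once, rather than going to $\partial\Omega_0$ where only one direction works.
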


\begin{proof}
Thanks to the convexity of $\Omega_0$,  we deduce
 $$ \|Du\|_{L^\fz(\Omega_0)}=
 \sup_{x\in\Omega_0}\lip\, u(x)=\lip(u,\Omega_0):=\sup_{x\neq y,x,y\in\Omega_0}\frac{u(x)-u(y)}{|x-y|}.$$
From $\Omega=\Omega_0\setminus\{x_0\}$ and $u\in C^0(\Omega_0)$, it follows that
$\lip(u,\Omega_0) =\lip(u,\Omega) $.
Since $u$ is an absolutely minimizing Lipschitz extension,  we know
$$\lip(u,\Omega) =\lip(u,\partial \Omega)=\sup_{y\in\partial \Omega_0}\frac1{|x_0-y|}=\frac1{\dist(x_0,\partial\Omega_0)}.$$
Therefore  by Theorem \ref{everydiff},
$$|Du(x)|=\lip\, u(x)\le   \|Du\|_{L^\fz(\Omega_0)}=\frac1{\dist(x_0,\partial\Omega_0)}\quad\forall x\in\Omega.$$

Below we show that
$\liminf_{x\to  x_0}|Du(x)|\ge \|Du\|_{L^\fz(\Omega_0)} $.
 To this end,
 using Theorem 1.1 in \cite{swy08} we have
 $$\lim_{x\to x_ 0}\frac{|u(x)-u(x_0)+\|Du\|_{L^\fz(\Omega_0)}|x-x_0||}{|x-x_0|}=0.$$
 Given any $\ez>0$, there is $r_{\ez}<1$ such that
 \begin{align}\label{sin-point}
  \frac{u(x)-u(x_0)}{|x-x_0|}\ge \|Du\|_{L^\fz(\Omega_0)}-\ez\quad
 \forall x\in B(x_0,r_\ez)\backslash\{x_0\}\Subset \Omega_0.
 \end{align}
For each $x\in B(x_0,r_\ez)\backslash\{x_0\}$, by Lemma \ref{low-u} we get
$$|Du(x)|\ge \frac{u(x)-u(x+t\nu_{x})}{t}\quad \forall 0<t<r_{x,\nu_x}
,$$
where $\nu_{x}=-Du(x)/|Du(x)|$ and $u(x+r_{x,\nu_x}\nu_{x})=0$.
 Observe that, given any $t\in(0,\min\{r_\ez,r_{x,\nu_x}\})$,
 via \eqref{sin-point} we obtain
\begin{align*}\liminf_{x\to x_0}|Du(x)|&\ge\liminf_{x\to x_0}\frac{u(x)-u(x+t\nu_{x})}{t}\\
&\ge \frac{u(x_0)- \sup_{|z|=1}u(x_0+tz)}{t}\\
&\ge \|Du\|_{L^\fz(\Omega_0)}-\ez.
\end{align*}
 By the arbitrariness of $\ez>0$, we have  $\liminf_{x\to x_0}|Du(x)|\ge \|Du\|_{L^\fz(\Omega_0)} $ as desired.
\end{proof}

We are ready to prove Theorem \ref{th-geo}.

\begin{proof}[Proof of Theorem \ref{th-geo}]
If $\Omega=B(x_0,r)\setminus \{x_0\}$ for some $r>0$,
then the $\fz$-harmonic potential  $u(x)=1-|x-x_0|/r$. Obviously,   $u\in C^2(\Omega)$ and also $u$ is concave.
Thus (ii) and (iii) follow from (i).

Now we prove (ii) $\Rightarrow$ (i).
Assume   $u\in C^2(\Omega)$.
Obviously, $B(x_0, {\rm dist}(x_0,\partial\Omega_0))\subset\Omega_0$. The proof of (i) is reduced to  proving $
\Omega_0\subset B(x_0,{\rm dist}(x_0,\partial\Omega_0)).$
Given any $x\in \Omega$,  we show the desired
$|x-x_0|< {\rm dist}(x_0,\partial\Omega_0)$ as below.

Using Corollary \ref{streamline}, there exists a curve
$\gz_x\in C^0([0,T_x];\Omega_0)\cap C^1([0,T_x);\Omega) $ for some $0<T_{x}<\fz$ such that
\begin{align}\label{gzx}\frac{d\gz_x(t)}{d t}=Du(\gz_x (t))\quad \forall t\in [0, T_x); \quad \gz_x(0)=x,\quad \gz_x(T_x)=x_0 .
\end{align}
Note that $u\in C^2(\Omega)$,  by $D^2uDu\cdot Du=0$ in $\Omega$ one has
$$\frac{d }{dt}|Du|^2(\gz_x(t))
=D|Du|^2(\gz_x(t))\cdot Du(\gz_x(t))=0$$
and hence
$$|Du(\gz_x(0)|=|Du(\gz_x(t))| \quad \forall t\in [0, T_x).$$
Applying Lemma \ref{con-ze} one gets
$$|Du(x)|=\left|\frac{d\gz_x(t)}{d t}\right|=|Du(\gz_x(t))|=\lim_{s\to T_{x}^-}|Du(\gz_x(s))|= \frac{1}{{\rm dist}(x_0,\partial\Omega_0)} \quad \forall t\in [0, T_x).$$
Thanks to this, for any $t\in (0, T_x)$ we obtain
\begin{align*}
u(\gz_s(t))-u(\gz_x(0))
=\int^{t}_0Du(\gz_x(t)) \cdot \frac{d\gz_x(t)}{d t}\,dt
=\int^{t}_0|Du(\gz_x(t))| \left|\frac{d\gz_x(t)}{d t}\right|\,dt,
 \end{align*}
by $1>u(x_0)-u(x)\ge  u(\gz_s(t))-u(x)>0$ we conclude that
 \begin{align*}1>u(x_0)-u(x)=   \frac{1}{{\rm dist}(x_0,\partial\Omega_0)}
 \int^{t}_0\left|\frac{d\gz_x(t)}{d t}\right|\,dt \ge\frac{ |\gz_x(t)-\gz_x(0)|}{{\rm dist}(x_0,\partial\Omega_0)}\quad \forall t\in (0, T_x).
 \end{align*}
By $\gz_{x}\in C^0([0, T_x])$ and $\gz(T_x)=x_0$, letting $t\to T_{x}^{-}$ we have
$$1>\frac{ |x-x_0|}{{\rm dist}(x_0,\partial\Omega_0)}$$
as desired.

Finally  we prove (iii) $\Rightarrow$ (i).
Assume that $u$ is concave on $\Omega$.  Similarly,  we only need to show $
\Omega_0\subset B(x_0,{\rm dist}(x_0,\partial\Omega_0)).$
We claim that $|Du|$ is a constant.
If so, then by Lemma \ref{con-ze} $|Du|=\frac1{\dist(x_0,\partial\Omega_0)}$.
By Corollary \ref{streamline}, for any $x\in \Omega$ there is a curve
$\gz_x\in C^0([0,T_x];\Omega_0)\cap C^1([0,T_x);\Omega) $ for some $0<T_x<\fz$
so that \eqref{gzx} holds.  Via an argument similarly to above, for all $x\in \Omega_0$ we have $|x-x_0|< \dist(x_0,\partial\Omega)$ as desired.

To prove the claim that $|Du|$ is a constant,  since $|Du|^2\in W^{1,2}_\loc(\Omega)$
we only need to  show that
$D|Du|^2=0$ almost everywhere.
 The concavity of $u$   implies that $u$ is twice differentiable
almost everywhere  on $\Omega$; see  \cite[Theorem 2.3.1]{cs04}.
Denote by $D^2u$ as its Hessian matrix.
Note that  $D^2u$ is symmetric almost everywhere.    Thanks to  Theorem 1.2, $ D|Du|^2\cdot Du=0$ almost everywhere,  and therefore
 $D^2uDu\cdot Du =0$ almost everywhere.
 At each such point  one has $D|Du|^2=\frac12D^2uDu$.
  So it further suffices to show that $D^2uDu=0$ almost everywhere.

Given any point $x$, where  $D^2u$ is symmetric and   $ D^2uDu\cdot Du=0$,
for any $\xi\in\rn\setminus\{0\}$
one has
 $$u(x+t\xi)= u(x)+t Du(x)\cdot \xi+\frac{t^2}2\xi^TD^2u(x)\xi+o(t^2)\quad\mbox{as $t\to0$}.$$
Since
the concavity of $u$ implies that
 $$u(y)\le u(x)+Du(x)\cdot(y-x)\quad\forall y,x\in\Omega.$$
By setting $y=x+t\xi$ for sufficiently small $t$, and letting $t\to 0$ we conclude that
 \begin{align*}
D^2u(x) \xi\cdot \xi\le \lim_{t\to 0}\frac{o(t^2)}{t^2}=0
\quad \forall \xi\in \rr^n.
\end{align*}
Taking $\xi=  \frac1\ez  Du(x)+ \ez D^2u(x)Du(x)$ with $\ez>0$, at $x$ we get
 \begin{align*}
 \frac1{\ez^2}  D^2uDu\cdot Du+
2|D^2uDu|^2+ \ez^2  D^2uD^2uDu \cdot D^2uDu\le
0.
\end{align*}
Since $D^2uDu\cdot Du=0$ at point $x$ we get
 \begin{align*}
2|D^2uDu|^2+\ez^2D^2uD^2uDu \cdot D^2uDu\le 0.
\end{align*}
Letting $\ez\to0$, at $x$ we have $|D^2uDu|^2=0$  as desired.

\end{proof}

\section{Proof of Theorem \ref{secsob} }\label{Section9}
In this section we prove Theorem \ref{secsob} by using
Theorem \ref{sob-re} and  a fundamental  inequality
in Lemma \ref{id1}.

Let us start by proving the following upper bound of $D^2u_p$  for
the $p$-harmonic potential $u_p$ in $\Omega$.

\begin{lem}\label{sec-sobup}
Let $p\in(4,\fz)$. For all $B(z,r)\Subset \Omega$ we have
\begin{align*}
\int_{B(z,r)}| D^2u_p|\xi^2\,dx\le 2\int_{B(z,r)}-\bdz u_p\xi^2\,dx
+2\int_{B(z,r)}| D|Du_p||\xi^2\,dx\quad \forall \xi\in C^\fz_c(B(z,r)).
\end{align*}
\end{lem}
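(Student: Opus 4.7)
The plan is to apply the fundamental structural inequality of Lemma \ref{id1} to $u_p$ and then exploit the $p$-Laplace equation together with $\Delta u_p \le 0$ (both from Lewis, quoted in Section \ref{Section3}) to turn it into a pointwise bound on $|D^2 u_p|$. Since $u_p \in C^\infty(\Omega)$, is quasi-concave on $\rn$, and satisfies $Du_p \neq 0$ in $\Omega$, Lemma \ref{id1} gives
\begin{align*}
2[|D^2 u_p\, Du_p|^2 - \Delta u_p \Delta_\fz u_p] \ge |Du_p|^2\bigl[|D^2 u_p|^2 - (\Delta u_p)^2\bigr]\quad \text{in }\Omega.
\end{align*}

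Next I will use the $p$-Laplace identity $\Delta_p u_p = 0$, equivalently $\Delta_\fz u_p = -\tfrac{1}{p-2}|Du_p|^2 \Delta u_p$. Multiplying by $-\Delta u_p$ and using Lewis' sign information $-\Delta u_p \ge 0$, one gets
\begin{align*}
-2 \Delta u_p \Delta_\fz u_p = \tfrac{2}{p-2} |Du_p|^2 (\Delta u_p)^2 \ge 0.
\end{align*}
Plugging this into the previous inequality and dividing by $|Du_p|^2 > 0$ yields
\begin{align*}
|D^2 u_p|^2 \le \tfrac{p}{p-2}(\Delta u_p)^2 + 2\,\frac{|D^2 u_p\, Du_p|^2}{|Du_p|^2}.
\end{align*}
Since $Du_p \neq 0$, the identity $D|Du_p| = \frac{D^2 u_p\, Du_p}{|Du_p|}$ gives $\frac{|D^2 u_p\, Du_p|^2}{|Du_p|^2} = |D|Du_p||^2$. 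For $p \ge 4$ we have $\tfrac{p}{p-2} \le 2$, hence
\begin{align*}
|D^2 u_p|^2 \le 2(\Delta u_p)^2 + 2|D|Du_p||^2.
\end{align*}

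Taking square roots via $\sqrt{a+b} \le \sqrt a + \sqrt b$ and using $\sqrt{2} \le 2$ and $|\Delta u_p| = -\Delta u_p$, we obtain the pointwise bound
\begin{align*}
|D^2 u_p|(x) \le 2(-\Delta u_p)(x) + 2|D|Du_p||(x) \quad \forall x \in \Omega.
\end{align*}
The conclusion follows by multiplying by $\xi^2$ with $\xi \in C_c^\fz(B(z,r))$ and integrating. The only mild obstacle is the bookkeeping that ensures the $\Delta_\fz u_p$ term produced by the $p$-equation has the favorable sign in Lemma \ref{id1}; once the sign is pinned down (using $\Delta u_p \le 0$ and $p>2$), the rest is algebraic manipulation and elementary inequalities, and the restriction $p \ge 4$ enters only to bound $p/(p-2)$ by $2$.
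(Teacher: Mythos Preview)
Your proof is correct and follows essentially the same approach as the paper: apply Lemma \ref{id1} to $u_p$, use the $p$-equation $\Delta_\fz u_p = -\tfrac{1}{p-2}|Du_p|^2 \Delta u_p$ to replace the cross term, divide by $|Du_p|^2$, and observe that $1+\tfrac{2}{p-2}=\tfrac{p}{p-2}\le 2$ when $p\ge 4$. The paper writes the resulting pointwise bound as $|D^2u_p|^2\le 2[(\Delta u_p)^2+|D|Du_p||^2]$ and then passes to $|D^2u_p|\le 2|\Delta u_p|+2|D|Du_p||$ exactly as you do.
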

\begin{proof}
 Since $Du_p\neq 0$ on $\Omega $, recalling that Lemma \ref{id1} give us
$$| D^2u_p|^2-(\bdz u_p)^2\le 2[| D|Du_p||^2-\bdz_\fz^N u_p \bdz u_p]\quad {\rm in}\quad
\Omega.$$
Using equation $-\bdz_\fz^N u_p =\frac1{p-2}\bdz u_p$  in $\Omega$, we have
$$| D^2u_p|^2\le [1+\frac{2}{p-2}](\bdz u_p)^2+2| D|Du_p||^2
\le 2[(\bdz u_p)^2+| D|Du_p||^2]\quad {\rm in}\quad
B(z,r)\Subset \Omega,$$
which immediately implies
$$| D^2u_p|\le 2|\bdz u_p|+2| D|Du_p||\quad {\rm in}\quad
B(z,r),\quad \forall p\in (4,\fz).$$
Multiplying both sides by a test function $\xi^2$ with $\xi\in C^\fz_c(B(z,r))$,
and integrating over on $B(z,r)$,  we get
\begin{align*}
\int_{B(z,r)}| D^2u_p|\xi^2\,dx\le 2\int_{B(z,r)}|\bdz u_p|\xi^2\,dx
+2\int_{B(z,r)}| D|Du_p||\xi^2\,dx.
\end{align*}
Since
$-\bdz u_p\ge 0$ on $\Omega $ (see \cite{l77}), we obtain the desired inequality.
\end{proof}

  By Lemma \ref{sob-es}, we  have  the following upper bound uniformly in all $p>4$.
\begin{lem}\label{sec-sobup-2}
For $p\in(4,\fz)$, there exists a universal constant $C$ such that
\begin{align*}
 \int_{B(z,r/2)}| D^2u_p|\,dx
&\le C\frac1r\int_{B(z,r)}|Du_p|\,dx \\
&\quad\quad
+C\frac1r \max_{B(z,r)}|Du_p|^2\left(\int_{B(z,r)}|Du_p|^{-2}\,dx\right)^{\frac 12}  \quad
\mbox{ whenever $B(z,r)\Subset \Omega $.}
\end{align*}

\end{lem}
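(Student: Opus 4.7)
The plan is to insert a standard cutoff function into Lemma~\ref{sec-sobup} and then estimate the two resulting integrals separately. Choose $\xi\in C^\infty_c(B(z,r))$ with $0\le\xi\le 1$, $\xi\equiv 1$ on $B(z,r/2)$, $|D\xi|\le C/r$ and $|D^2\xi|\le C/r^2$. Since $\xi^2\ge\chi_{B(z,r/2)}$, Lemma~\ref{sec-sobup} immediately gives
\begin{align*}
\int_{B(z,r/2)}|D^2u_p|\,dx \le 2\int_{B(z,r)}(-\Delta u_p)\,\xi^2\,dx + 2\int_{B(z,r)}|D|Du_p||\,\xi^2\,dx,
\end{align*}
reducing the problem to bounding each piece on the right.

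For the first piece, I would use that $-\Delta u_p\ge 0$ together with $u_p\in C^\infty(\Omega)$ and integrate by parts:
\begin{align*}
\int(-\Delta u_p)\xi^2\,dx = 2\int Du_p\cdot D\xi\,\xi\,dx \le \frac{C}{r}\int_{B(z,r)}|Du_p|\,dx,
\end{align*}
which yields the first term on the right-hand side of the target inequality.

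For the second piece, I would exploit $Du_p\ne 0$ in $\Omega$ (Lewis \cite{l77}, cf.\ Section~\ref{Section3}) to write $D|Du_p|=-|Du_p|^2\,D|Du_p|^{-1}$, so that
\begin{align*}
\int|D|Du_p||\,\xi^2\,dx \le \max_{\overline{B(z,r)}}|Du_p|^2\cdot\int|D|Du_p|^{-1}|\,\xi^2\,dx.
\end{align*}
Applying Cauchy--Schwarz and then the Caccioppoli-type estimate that appears inside the proof of Lemma~\ref{sob-es} in the case $\alpha=-1$, namely
\begin{align*}
\int|D|Du_p|^{-1}|^2\xi^2\,dx \le C\int|Du_p|^{-2}\bigl(|D\xi|^2+|D^2\xi|\,\xi\bigr)\,dx \le \frac{C}{r^2}\int_{B(z,r)}|Du_p|^{-2}\,dx,
\end{align*}
combined with $\int\xi^2\,dx\le|B(z,r)|$, produces the second term on the right-hand side of the target inequality (absorbing dimensional constants into $C$). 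Summing the two bounds completes the proof.

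The main obstacle is purely one of bookkeeping: one must invoke the \emph{local} Caccioppoli inequality that is produced in the proof of Lemma~\ref{sob-es} (the step preceding the choice of cutoff), rather than its displayed global conclusion, so that the same $\xi$ used to truncate $|D^2u_p|$ can be recycled as test function for the $|D|Du_p|^{-1}|^2$ estimate. Once that is in place, no new ideas are needed -- only two integrations by parts and one application of Cauchy--Schwarz.
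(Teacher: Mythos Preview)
Your proof is correct and follows exactly the paper's approach: apply Lemma~\ref{sec-sobup} with a standard cutoff, integrate by parts to handle the $-\Delta u_p$ term, and invoke the Caccioppoli estimate of Lemma~\ref{sob-es} (case $\alpha=-1$) together with Cauchy--Schwarz for the $|D|Du_p||$ term. The paper's write-up is simply terser, citing Lemmas~\ref{sec-sobup} and~\ref{sob-es} for the second piece and only spelling out the integration-by-parts step for the first.
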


\begin{proof} Thanks to   Lemma \ref{sec-sobup} and Lemma \ref{sob-es},
we only need to bound $\int_{B(z,r)}-\bdz u_p \xi^2\,dx$.
 Via integration by parts we obtain
\begin{align*}
2\int_{B(z,r)}-\bdz u_p \xi^2\,dx&=-2\int_{B(z,r)}\bdz u_p\xi^2\,dx=4\int_{B(z,r)}Du_p\cdot D \xi \xi\,dx\quad \forall \xi\in C^\fz_c(B(z,r)).
\end{align*}
By choosing a suitable cut-off function $\xi$,
one has
$$2\int_{B(z,r)}-\bdz u_p\xi^2\,dx\le \frac1r\int_{B(z,r)}|Du_p|\,dx.$$
\end{proof}

Letting  $p\to\fz$ we conclude the following from Lemma \ref{sec-sobup-2}.
We write $\|\mu\|$ as the variation measure of a signed Radon measure $\mu$. If $\mu$ is nonnegative, then $\|\mu\|=\mu$.
\begin{lem}\label{w21}
The distributional second order derivatives $\mathcal D^2u$ are Radon measures   satisfying
\begin{align}\label{du2-eq}
\int_\Omega \langle \mathcal D^2u Du, Du\rangle\xi \,dx=0
 \quad \forall  \xi\in C^\fz_c(\Omega) ,
\end{align}
and also $D^2u_p\to \mathcal D^2u$ weakly in the sense of measure.

It holds that $Du\in BV_\loc(\Omega)$, that is,
 the distributional derivatives   $\mathcal D [Du]$ of $Du$ are Radon measures, and that
   $\mathcal D [Du]=\mathcal D^2u$.

 Moreover, there exists a universal constant $C$ such that
\begin{align}\label{w21-eq1}
 \| \mathcal{D}^2u\|(B(z,r/2))
&\le C\frac1r\int_{B(z,r)}|Du|\,dx \nonumber\\
&\quad\quad
+C\frac1r \max_{B(z,r)}|Du|^2\left(\int_{B(z,r)}|Du|^{-2}\,dx\right)^{\frac 12}  \quad
\mbox{ whenever $B(z,r)\Subset \Omega $,}
\end{align}
and also
\begin{align}\label{w21-eq2}
 \| \mathcal{D}^2u\|(B(z,r/2)) \le 2[-\bdz u](B(z,r))
+2\int_{B(z,r)}| D|Du || \,dx \quad
\mbox{ whenever $B(z,r)\Subset \Omega $,}
\end{align}
where $-\bdz u$ is a nonnegative  Radon measure.
\end{lem}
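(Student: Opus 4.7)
\textbf{Plan of proof for Lemma \ref{w21}.} The strategy is to pass to the limit $p\to\infty$ in the quantitative bounds of Lemmas \ref{sec-sobup}--\ref{sec-sobup-2} using the $C^1$-convergence $u_p\to u$ from Theorem \ref{c1-re} and the uniform control on $|Du|$ from Lemma \ref{un-bdup} and Theorem \ref{sob-re}. First, for any $B(z,r)\Subset\Omega$, Lemma \ref{un-bdup} provides uniform upper and lower bounds on $|Du_p|$ on $\overline{B(z,r)}$ for all large $p$, so Lemma \ref{sec-sobup-2} yields a bound $\int_{B(z,r/2)}|D^2u_p|\,dx\le C(z,r)$ uniform in $p\in(4,\infty)$. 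By weak-$*$ compactness of Radon measures, a subsequence of $D^2u_p$ converges weakly in the sense of measure to some matrix-valued Radon measure $\mu$ on every relatively compact subdomain, and by a standard diagonal argument this gives weak convergence on all of $\Omega$.

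Next I would identify $\mu$ with the distributional Hessian $\mathcal{D}^2 u$. For any $\xi\in C^\infty_c(\Omega)$, two integrations by parts give $\int_\Omega (u_p)_{x_ix_j}\xi\,dx=\int_\Omega u_p\,\xi_{x_ix_j}\,dx$, and since $u_p\to u$ uniformly on compact sets, the right side tends to $\int_\Omega u\,\xi_{x_ix_j}\,dx=\langle \mathcal{D}_{ij}^2 u,\xi\rangle$. On the other hand, the left side converges to $\int_\Omega \xi\,d\mu_{ij}$ by weak convergence. Hence $\mu=\mathcal{D}^2 u$, proving that the distributional Hessian is a Radon measure and that the full sequence $D^2u_p\rightharpoonup\mathcal{D}^2 u$ weakly in measure. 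Because $Du_p\to Du$ in $C^0_\loc$ and $D^2u_p$ is uniformly bounded in $L^1_\loc$, it follows from the definition of $BV_\loc$ that $Du\in BV_\loc(\Omega)$ with $\mathcal{D}[Du]=\mathcal{D}^2 u$.

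For the identity \eqref{du2-eq}, start from $-\Delta_p u_p=0$, which is equivalent to
\begin{equation*}
\langle D^2 u_p\,Du_p,Du_p\rangle=-\tfrac{1}{p-2}|Du_p|^2\,\Delta u_p\quad\text{in }\Omega.
\end{equation*}
Testing with $\xi\in C^\infty_c(\Omega)$ and integrating by parts on the right,
\begin{equation*}
\int_\Omega \langle D^2 u_p\,Du_p,Du_p\rangle\xi\,dx=\tfrac{1}{p-2}\int_\Omega Du_p\cdot D(|Du_p|^2\xi)\,dx.
\end{equation*}
The right side is $O(1/p)$ because $|Du_p|$ is bounded and $|Du_p|^2\to|Du|^2$ weakly in $W^{1,2}_\loc$ by Theorem \ref{sob-re}; hence it vanishes as $p\to\infty$. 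The left side converges to $\int_\Omega \langle Du,Du\rangle\xi\,d\mathcal{D}^2 u$ because the continuous compactly supported test tensor $\xi\,Du\otimes Du$ is the uniform limit of $\xi\,Du_p\otimes Du_p$ on the support of $\xi$ (this is the main step requiring care, relying crucially on the $C^1$-convergence in Theorem \ref{c1-re}). This yields \eqref{du2-eq}.

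Finally, the quantitative bounds. Taking $\xi$ to be a standard cut-off, the lower semicontinuity of the total variation under weak-$*$ convergence of Radon measures gives
\begin{equation*}
\|\mathcal{D}^2 u\|(B(z,r/2))\le\liminf_{p\to\infty}\int_{B(z,r/2)}|D^2 u_p|\,dx,
\end{equation*}
and \eqref{w21-eq1} follows directly from Lemma \ref{sec-sobup-2} together with the uniform convergence $|Du_p|\to|Du|$ and $|Du_p|^{-2}\to|Du|^{-2}$ on $\overline{B(z,r)}$. For \eqref{w21-eq2}, Lemma \ref{sec-sobup} gives the same bound with $-\Delta u_p$ and $|D|Du_p||$ on the right; taking the cut-off $\xi$ identically $1$ on $B(z,r/2)$ and supported in $B(z,r)$, passing to the limit using that $-\Delta u_p\ge 0$ converges weakly as measures to the nonnegative Radon measure $-\Delta u$ (obtained as the trace of $-\mathcal{D}^2 u$, which inherits nonnegativity from its $p$-approximants), and that $|D|Du_p||\to|D|Du||$ weakly in $L^2_\loc$ by Theorem \ref{sob-re}, yields \eqref{w21-eq2}. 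The main obstacle throughout is the passage to the limit in the bilinear expression $\langle D^2 u_p\,Du_p,Du_p\rangle$, which is handled cleanly only because Theorem \ref{c1-re} upgrades the convergence of $Du_p$ from weak to locally uniform.
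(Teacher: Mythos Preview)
Your proposal is correct and follows essentially the same route as the paper: uniform $L^1_\loc$ bounds on $D^2u_p$ from Lemmas \ref{sec-sobup}--\ref{sec-sobup-2}, weak-$*$ compactness of Radon measures, identification of the limit with $\mathcal D^2u$ via two integrations by parts and $u_p\to u$ in $C^0$, and then passage to the limit in the quantitative estimates and in the equation $\Delta_\infty u_p=-\tfrac{1}{p-2}|Du_p|^2\Delta u_p$ using the $C^1$-convergence from Theorem \ref{c1-re}. One small imprecision: what Theorem \ref{sob-re} gives is weak $L^2_\loc$ convergence of the \emph{vector} $D|Du_p|$, not of its modulus $|D|Du_p||$; for \eqref{w21-eq2} the relevant input is the uniform $W^{1,2}_\loc$ bound on $|Du_p|$ (yielding boundedness of $\int_{B(z,r)}|D|Du_p||$), and the paper's own proof treats this passage with the same brevity.
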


\begin{proof}
Applying Lemma \ref{sec-sobup-2} and Lemma \ref{un-bdup}, we know that
$D^2u_p\in L^1_\loc(\Omega)$ uniformly in all $p>4$.
Via the compactness of the space $\mathcal M(K)$ of Radon measures in any compact subset $K \Subset \Omega$,
for all $1\le i,j\le n$
we can find a locally finite  Radon measure $\mu_{ij}$ so that
 $ \frac{\partial^2 u_p}{\partial x_i\partial x_j}$ weakly converges to  $\mu_{ij}$ in each $K \Subset \Omega$ in the sense of measure,
that is,
$$\int_\Omega \frac{\partial^2 u_p}{\partial x_i\partial x_j}\phi\,dx\to \mu_{ij}(\phi)\quad\forall \phi\in C^{\fz}_c(\Omega).$$
On the other hand, since $ u_p\to  u$ in $C^0(\Omega)$, we know that
$$\int_\Omega \frac{\partial^2 u_p}{\partial x_i\partial x_j}\phi\,dx=\int_\Omega u_p\frac{\partial^2 \phi}{\partial x_i\partial x_j} ,dx\to
\int_\Omega u \frac{\partial^2 \phi}{\partial x_i\partial x_j}  dx  \quad\forall \phi\in C^{\fz}_c(\Omega).$$
That is, $\mu_{ij}$ coincides with the distributional second order partial derivatives  $\mathcal D_{ij}u$
of $u$.
Note  that $\mathcal D_{ji}u=  \mathcal D_{ij}u$, we have $\mu_{ij}=\mu_{ji}$.

Since
$$\int_\Omega
\mathcal D_j u_{x_i} \phi\,dx=-\int_\Omega  u_{x_i} \phi_{x_j}\,dx=
\int_\Omega  u \phi_{x_jx_i}\,dx=\int_{\Omega}\mathcal D_{x_ix_j}u\phi\,dx,$$
we know that  the distributional  derivative $\mathcal D_j u_{x_i}=\mu_{ij}$ is also
a Radon measure, that is, $u_{x_i}\in BV_\loc(\Omega)$.

Recall that $Du_p\to Du$ in $C^0(\Omega)$ in Theorem
\ref{c1-re} and $-\bdz u_p\ge 0$ in $\Omega$, then passing to the
limit $p\to \fz$ in Lemma \ref{sec-sobup} and Lemma \ref{sec-sobup-2} we get
\eqref{w21-eq1} and \eqref{w21-eq2} as desired. In particular, we have that
$-\bdz u$ is a nonnegative Radon measure. Similarly,  using equation
 $D^2u_p Du_p\cdot Du_p=-\frac {1}{p-2}\bdz u_p$ we conclude \eqref{du2-eq} follows by
 letting $p\to \fz$.

\end{proof}
We now come to prove Theorem \ref{secsob}.

\begin{proof}[Proof of Theorem \ref{secsob}]
This follows from Lemma \ref{w21}.
\end{proof}

\section{Proof of Theorem \ref{twice-re}}\label{Section10}

In this section we prove Theorem \ref{twice-re}.
We assume that dimension $n=2$.

We start by showing the following.
\begin{lem}\label{twice-le}
For all $\alpha\in\rr$, $|Du|^\alpha$ is differentiable almost everywhere in $\Omega$.
For $i=1,2$, $u_{x_i}$ and $u^2_{x_i}$ are differentiable almost everywhere in $\Omega$.
\end{lem}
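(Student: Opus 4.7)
The plan is to reduce all three assertions to the almost everywhere classical differentiability of the partial derivatives $u_{x_i}$, $i=1,2$, and then to prove the latter via Stepanov's differentiability theorem. By Theorem \ref{c1-re} we have $u\in C^1(\Omega)$ with $|Du|\ne 0$ in $\Omega$, so $u_{x_1},u_{x_2}$ and $|Du|$ are continuous and $|Du|$ is locally bounded away from $0$. Since the map $(a,b)\mapsto(a^2+b^2)^{\az/2}$ for $\az\ne 0$ (respectively $(a,b)\mapsto \tfrac12\ln(a^2+b^2)$ for the convention $\az=0$) is $C^\fz$ on a neighborhood of $(u_{x_1}(x),u_{x_2}(x))$, the chain rule immediately yields classical differentiability of $|Du|^\az$ and of $u_{x_i}^2$ at every point where both $u_{x_1}$ and $u_{x_2}$ are classically differentiable. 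The proof of the lemma is therefore reduced to showing that $u_{x_i}$ is differentiable at almost every point of $\Omega$ for $i=1,2$.

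For this, I would invoke Stepanov's theorem (the reference \cite{s25} cited in the ideas of Theorem \ref{twice-re}): a function on a planar open set is classically differentiable at a.e.\ point $x$ at which its pointwise Lipschitz constant $\lip u_{x_i}(x)$ is finite. Since Theorem \ref{secsob} gives $Du\in BV_\loc(\Omega)$, each $u_{x_i}$ lies in $BV_\loc(\Omega)$ with $\|\mathcal D u_{x_i}\|\le \|\mathcal D^2 u\|$ as Radon measures, and obviously $\lip u_{x_i}(x)\le \lip(Du)(x)$. The key input is the monotonicity-type property of $Du$ that is to be established in Appendix B; combined with that observation it furnishes a pointwise estimate of the form
\[
\lip u_{x_i}(x)\;\le\;\lip(Du)(x)\;\le\;C\limsup_{r\to 0^+}\frac{\|\mathcal D^2 u\|(B(x,r))}{r^2}
\]
for every $x\in\Omega$ and a universal constant $C$.

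It then remains to verify that the right-hand side is finite for almost every $x$. Substituting the quantitative bound \eqref{w21-eq2} of Lemma \ref{w21},
\[
\|\mathcal D^2u\|(B(x,r))\;\le\;2[-\bdz u](B(x,2r))+2\int_{B(x,2r)}|D|Du||\,dx,
\]
splits the argument into two parts. Since Theorem \ref{sob-re} yields $|D|Du||\in L^2_\loc(\Omega)\subset L^1_\loc(\Omega)$, Lebesgue's differentiation theorem gives $\limsup_{r\to 0^+} r^{-2}\int_{B(x,2r)}|D|Du||\,dx<\fz$ for a.e.\ $x$. For the nonnegative Radon measure $-\bdz u$, the Radon--Nikodym decomposition $-\bdz u=[-\bdz u]_{\ac}+[-\bdz u]_{s}$ and the standard differentiation theory of Radon measures in $\rr^2$ give that $r^{-2}[-\bdz u]_{\ac}(B(x,2r))$ converges to a finite multiple of the density of $[-\bdz u]_{\ac}$ at a.e.\ $x$, while $r^{-2}[-\bdz u]_s(B(x,2r))\to 0$ at a.e.\ $x$. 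Therefore $\lip u_{x_i}(x)<\fz$ almost everywhere in $\Omega$ for $i=1,2$, and Stepanov's theorem completes the proof. The main obstacle is precisely the monotonicity-based bound $\lip(Du)(x)\le C\limsup_r r^{-2}\|\mathcal D^2 u\|(B(x,r))$: a generic $BV_\loc$ function on the plane is only \emph{approximately} differentiable a.e., so without the monotonicity feature of $Du$ (which ultimately relies on the quasi-concavity of $u$ established in Section \ref{Section3} and its $\fz$-harmonicity) one cannot bridge the gap from $BV$ regularity to pointwise classical differentiability.
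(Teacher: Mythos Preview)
Your proposal is correct and follows essentially the paper's approach: the core argument for the almost everywhere differentiability of $u_{x_i}$ via Stepanov's theorem, the monotonicity estimate from Appendix B (Lemma \ref{mn-u2}), the bound \eqref{w21-eq2}, and the Radon--Nikodym decomposition of $-\bdz u$ is identical to the paper's. The only organizational difference is that the paper treats $|Du|^2$ separately, using the monotonicity of $|Du|^2$ together with $|Du|^2\in W^{1,2}_\loc$ and the estimate \eqref{m-e1} (which gives a slightly shorter route for that particular function), whereas you deduce the differentiability of $|Du|^\az$ and $u_{x_i}^2$ directly from that of $u_{x_1},u_{x_2}$ via the chain rule; your reduction is perfectly valid and arguably more economical.
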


The  following  can be concluded from Rademacher's theorem \cite{r19}; see also
Stepanov in \cite{s25}.
\begin{lem}\label{Stepanoff}
Let $U\subset \rr^n$ be any domain.
A function $g: U\to \rr$ is differentiable almost everywhere in $U$ if and only if
$\lip \, g(x)
<\fz$   for  almost   all $ x\in U.$

\end{lem}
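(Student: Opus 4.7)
The plan is to prove both directions of the equivalence; the forward implication is immediate while the reverse is Stepanov's classical extension of Rademacher's theorem.

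For the easy direction, suppose $g$ is differentiable at $x\in U$. Then by definition of the differential there exists a linear map $L=Dg(x)$ such that $g(y)=g(x)+L(y-x)+o(|y-x|)$ as $y\to x$, and hence
\[
\limsup_{r\to 0}\sup_{|y-x|\le r}\frac{|g(y)-g(x)|}{r}\le |L|<\fz.
\]
This gives $\lip\,g(x)<\fz$, so the set where $g$ is differentiable is contained in $\{x\in U:\lip\,g(x)<\fz\}$ up to the null set of non-differentiability points.

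For the reverse implication, the idea is to exhaust the set $E:=\{x\in U:\lip\,g(x)<\fz\}$ by a countable family of sets on which $g$ is Lipschitz, and then apply Rademacher's theorem on each after extending. Concretely, for each $k\in\nn$ define
\[
E_k:=\Big\{x\in U:|g(y)-g(x)|\le k|y-x|\ \ \forall y\in U\text{ with }|y-x|\le \tfrac{1}{k}\Big\}.
\]
First I would verify that $E=\bigcup_k E_k$, which follows from the definition of $\lip\,g$. Each $E_k$ is a Borel set, and by construction $g$ restricted to $E_k\cap B(x_0,\tfrac{1}{2k})$ is $k$-Lipschitz for every $x_0\in E_k$. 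Using a countable cover of $U$ by balls of radius $\tfrac{1}{2k}$, it is enough to work on a fixed such ball $B$ and show $g$ is differentiable almost everywhere on $E_k\cap B$.

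On $E_k\cap B$, $g$ is $k$-Lipschitz, so by the McShane extension there is a $k$-Lipschitz function $\tilde g_k:\rn\to\rr$ agreeing with $g$ on $E_k\cap B$. By Rademacher's theorem \cite{r19}, $\tilde g_k$ is differentiable at almost every point of $\rn$, in particular at almost every point of $E_k\cap B$. The key remaining step, which I expect to be the main obstacle, is to transfer differentiability of $\tilde g_k$ at density points of $E_k\cap B$ back to differentiability of $g$ itself. Let $x\in E_k\cap B$ be simultaneously a Lebesgue density point of $E_k\cap B$ and a point of differentiability of $\tilde g_k$, with $L:=D\tilde g_k(x)$. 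Almost every point of $E_k\cap B$ has these properties. For $y\in U$ close to $x$, I would use the density of $E_k\cap B$ at $x$ to choose a point $z\in E_k\cap B$ with $|y-z|\le \ez|y-x|$ for any prescribed $\ez>0$, provided $y$ is close enough to $x$; this is possible by standard density-point estimates. Then
\[
|g(y)-g(x)-L(y-x)|\le |g(y)-g(z)|+|\tilde g_k(z)-\tilde g_k(x)-L(z-x)|+|L||y-z|.
\]
The middle term is $o(|z-x|)=o(|y-x|)$ by differentiability of $\tilde g_k$ at $x$, while the third is at most $|L|\ez|y-x|$. The first term requires $|y-z|\le 1/k$ and $y\in U$, which hold for $y$ close to $x$, and then by membership of $z\in E_k$ one has $|g(y)-g(z)|\le k|y-z|\le k\ez|y-x|$. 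Sending $y\to x$ and then $\ez\to 0$ yields $g(y)=g(x)+L(y-x)+o(|y-x|)$, so $g$ is differentiable at $x$ with $Dg(x)=L$. Taking the union over $k$, $g$ is differentiable almost everywhere on $E$, which proves the remaining direction.
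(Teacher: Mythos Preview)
The paper does not actually prove this lemma; it simply cites it as the classical Stepanov theorem, writing that the statement ``can be concluded from Rademacher's theorem \cite{r19}; see also Stepanov in \cite{s25}.'' Your proposal supplies a full proof where the paper gives none, and the argument you outline is the standard one: exhaust $\{\lip\,g<\fz\}$ by the sets $E_k$, use McShane extension and Rademacher on each piece, and transfer differentiability back to $g$ at density points of $E_k$. This is correct and is exactly the classical route to Stepanov's theorem, so there is nothing to compare against in the paper itself.

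One small point worth tightening in the density-point step: you assert that for each $\ez>0$ and each $y$ sufficiently close to $x$ one can find $z\in E_k\cap B$ with $|y-z|\le\ez|y-x|$. This does follow from Lebesgue density, but it is worth making the measure argument explicit (if $B(y,\ez|y-x|)$ missed $E_k\cap B$ entirely, its volume $c_n\ez^n|y-x|^n$ would contradict $|B(x,(1+\ez)|y-x|)\setminus(E_k\cap B)|=o(|y-x|^n)$). With that clarification the proof is complete.
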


\begin{proof}[Proof of Lemma \ref{twice-le}]
We first show that $|Du|^{\az}$ is differentiable almost everywhere in $\Omega$ for all
$\az\in \rr$.
Recall that Theorem \ref{c1-re} gives us
$u\in C^1(\Omega)$ and $|Du|\neq 0$ in $\Omega$. It suffices to prove
the almost everywhere differentiability of $|Du|^2$. 
Thanks to  Lemma \ref{Stepanoff}, we only need  to prove that
$\lip( |Du|^2)(z)<\fz$ for almost all $z\in \Omega$. 
To this end, using \eqref{m-e1} in
Lemma \ref{mn-u2}, we have
\begin{align*}\lip( |Du|^2)(z)&=\lim_{r\to0}\sup_{y\in B(z,r)}\frac{ |Du(y)|^2-|Du(z)|^2}{r}\\
&\le
\limsup_{r\to0}\osc_{B(z,r)} \frac{|Du|^2}r\\
&\le \limsup_{r\to0}\left(\mint-_{B(z,r)}|D|Du|^2|^2\,dx\right)^{1/2},
\end{align*}
which is finite whenever $z$ is  a Lebesgue point of  $|D|Du|^2|^2$.
Since  $|D|Du|^2|^2\in L^1_\loc(\Omega)$,  its Lebesgue points is dense in $\Omega$. Thus
$\lip( |Du|^2)(z)<\fz$ for almost all $z\in \Omega$ as desired.

Next, we show the almost everywhere differentiability of  $u_{x_i}$,
which gives the almost everywhere differentiability  of $u^2_{x_i}$  in an obvious way.
 Similarly to above,    it suffices to prove that
$\lip( u_{x_i})(z)<\fz$ for almost all $z\in \Omega$. By  \eqref{m-e2} in Lemma
 \ref{mn-u2} and Lemma \ref{w21}, we have
\begin{align}\label{x10.x1}
\lip( u_{x_i})(z)&=\limsup_{r\to0}\sup_{y\in B(z,r)}\frac{ u_{x_i}(y)-u_{x_i}(z)}{r}\nonumber\\
&\le \limsup_{r\to0}\frac{\osc_{B(z,r)}  u_{x_i}}r\nonumber\\
&\le \limsup_{r\to0} \frac{ \| D^2u\|(B(z,r/2)) }{r^2}\nonumber\\
&\le  C \limsup_{r\to0}  \frac{\|-\bdz u\|(B(z,r)) }{r^2}
+C\limsup_{r\to0}\mint{-}_{B(z,r)}| D|Du ||\,dx.
\end{align}
Thanks to
$| D|Du ||\in L^1_\loc(\Omega)$,  at its Lebesgue points and hence almost all points in $\Omega$, one has
\begin{align}\label{x10.x2}
 \limsup_{r\to0}\mint-_{B(z,r)}| D|Du ||\,dx<\fz \quad \mbox{for almost all $z\in \Omega$.}
 \end{align}
Thanks to \eqref{x10.x1} and \eqref{x10.x2},  in order to show
 $\lip( u_{x_i})(z)<\fz$ for almost all $z\in \Omega$,
it suffices to prove that \begin{align}\label{x10.x3} \limsup_{r\to0}  \frac{\|-\bdz u\|(B(z,r)) }{r^2} <\fz\ \mbox{ for almost all $z\in \Omega$.  } \end{align}

  Below we prove \eqref{x10.x3}. Recall that $Du\in BV_{\loc}(\Omega)$ and the distributional $[-\bdz u]$ is a nonnegative Radon measure.
  We therefore write $[-\bdz u]$ as the absolutely continuous part $[-\bdz u]_{ac}$ and the singular part $[-\bdz u]_{s}$ with respect to the Lebesgue measure, that is,
$$[-\bdz u]=[-\bdz u]=[-\bdz u]_{ac}+[-\bdz u]_{s};$$
see \cite[Chapter 6]{eg15}.
 By \cite[Theorem 6.1]{eg15} we have
\begin{align}\label{sin-pa}
\lim_{r\to 0}\frac{[-\bdz u]_s(B(z,r))}{r^2}=0\quad \mbox{for almost all $z\in\Omega$}.
\end{align}
Therefore
\begin{align*}
\lim_{r\to0}\frac{[-\bdz u](B(z,r))}{r^2}=\lim_{r\to0}\frac{[-\Delta u]_{ac}(B(z,r))}{r^2}\quad \mbox{for almost all $z\in\Omega$}.
\end{align*}
Denote by  $g\in L^1_\loc(\Omega)$  the Radon-Nikodym derivative of
$[-\bdz u]_{ac}$, that $[-\bdz u]_{ac}=g\,dx$.
At  any Lebesgue points $z$,
we have
$$\frac{[-\Delta u]_{ac}(B(z,r))}{r^2}=\lim_{r\to 0}\mint{-}_{B(z,r)}g(y)\,dy=g(z).$$
Thanks to   $g\in L^1_\loc(\Omega)$  and
the density of Lebesgue points of $g$ in $\Omega$  we attain
 \begin{align*}
\lim_{r\to0}\frac{[-\Delta u]_{ac}(B(z,r))}{r^2}
=g(z)<\fz\quad \mbox{for almost all $z\in\Omega$}.
\end{align*}
From this and \eqref{sin-pa} we conclude  \eqref{x10.x3} as desired.
\end{proof}

\begin{rem}\rm
Since $|Du|^2$ is monotone and
$|Du|^2\in W^{1,2}_{\loc}(\Omega)$,  by Onninen \cite[Theorem 1.2]{o20},
 $|Du|^2$ is differentiable almost everywhere.
 Here we give a direct proof  via Lemma \ref{mn-u2}.
 \end{rem}

Next, we show the following  property for the absolutely continuous part of Radon measure
$\mathcal D^2u$.
\begin{lem}\label{ty-f1}
 For $1\le i,j\le 2$, the absolutely continuous part of the measure
 $\mathcal D^2_{ji}u$  is given by $  u_{x_ix_{j}} \,dx$, that is, $$[\mathcal D^2u]_{ac}=[\mathcal D(Du)]_{ac}=D^2u\,dx.$$
In particular, $D^2u\in L^1_\loc(\Omega)$ is symmetric almost everywhere, that is,  $u_{x_1x_2}= u_{x_2x_1 }$  almost everywhere in $\Omega$.
\end{lem}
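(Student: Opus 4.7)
The plan is to combine the fact that $Du \in BV_{\loc}(\Omega)$ (established in Theorem \ref{secsob}/Lemma \ref{w21}) with the almost everywhere differentiability of $Du$ (Lemma \ref{twice-le}) via the standard Radon--Nikodym decomposition for BV functions. Concretely, since each component $u_{x_i}$ belongs to $BV_{\loc}(\Omega)$, the classical BV theory (see e.g.\ \cite[Chapter 6]{eg15}) yields the decomposition
$$\mathcal D(u_{x_i}) = \nabla (u_{x_i})\,dx + [\mathcal D (u_{x_i})]_s,$$
where $\nabla(u_{x_i})$ denotes the approximate differential of $u_{x_i}$, defined at almost every point of $\Omega$, and $[\mathcal D(u_{x_i})]_s$ is singular with respect to Lebesgue measure.

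The key link is that at any point $x$ where $u_{x_i}$ is classically differentiable in the sense of Lemma \ref{twice-le}, the approximate differential $\nabla(u_{x_i})(x)$ coincides with the classical gradient; this is a general fact about $L^1$ functions (classical differentiability implies $x$ is a Lebesgue point and the two notions of derivative agree there). Since Lemma \ref{twice-le} gives the classical differentiability of $u_{x_i}$ at almost every $x\in\Omega$, one concludes that the approximate differential equals the pointwise row $(u_{x_ix_1},u_{x_ix_2})$ of $D^2u$ almost everywhere. Combining this with the identity $\mathcal D(Du)=\mathcal D^2u$ from Theorem \ref{secsob} gives
$$[\mathcal D^2u]_{ac} = [\mathcal D(Du)]_{ac} = D^2u\,dx,$$
which also shows $D^2u\in L^1_{\loc}(\Omega)$ since the absolutely continuous part of a Radon measure is locally integrable.

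Finally, for symmetry: as distributions (and hence as Radon measures) one has $\mathcal D^2_{ij}u=\mathcal D^2_{ji}u$ because for any $\phi\in C^\infty_c(\Omega)$ we have $\int_\Omega u\,\phi_{x_ix_j}\,dx=\int_\Omega u\,\phi_{x_jx_i}\,dx$. Taking absolutely continuous parts and using the identification above yields $u_{x_ix_j}=u_{x_jx_i}$ almost everywhere in $\Omega$.

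I expect the only subtle point to be the passage from classical pointwise differentiability of $Du$ (which is what Lemma \ref{twice-le} provides through the Stepanov/Rademacher criterion) to the identification of the BV approximate differential with $D^2u$ almost everywhere; this is standard but should be stated carefully, perhaps with an explicit reference to \cite[Chapter 6]{eg15}. No delicate analysis beyond invoking the BV decomposition and the already-proved almost everywhere differentiability is needed.
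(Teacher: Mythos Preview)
Your proposal is correct and takes essentially the same route as the paper: both use the BV decomposition of $\mathcal D(u_{x_i})$ together with the almost everywhere classical differentiability of $Du$ from Lemma \ref{twice-le}, and both derive symmetry from $\mathcal D^2_{ij}u=\mathcal D^2_{ji}u$ as distributions. The only difference is cosmetic: the paper spells out the identification of the density $g_{ij}$ with $u_{x_ix_j}$ by explicitly comparing the $L^1$-Taylor expansion of a BV function (from \cite[Theorem 6.1]{eg15}) against the pointwise first-order expansion given by classical differentiability, whereas you package this comparison as the statement ``classical differentiability implies the approximate differential equals the classical gradient''.
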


\begin{proof}
Since $u_{x_i}\in BV_{\loc}(\Omega)$, the measure
$\mathcal D_ju_{x_i}$ is decomposed as the absolutely  continuous part   $[\mathcal D_ju_{x_i}]_{ac} $ and
the singular part $[\mathcal D_ju_{x_i}]_{s} $ with respect to the Lebesgue measure.
Denote by $ g_{ij}\in L^1_\loc(\Omega)$ the Radon-Nikodym derivative of $[\mathcal D_ju_{x_i}]_{ac} $ with respect to the Lesbesgue measure,
that  is,
 $[\mathcal D_ju_{x_i}]_{ac} =g_{ij}\,dx$.
 Since  $\mathcal D_1u_{x_2}=\mathcal D_{12}u= \mathcal D_{21}u=\mathcal D_2u_{x_1}$, we know that
$ g_{12}=g_{21}$ almost everywhere in $\Omega$.
 Let $z$ be any Lebesgue point of $\{g_{ij}\}_{j=1,2}$ so that  $ g_{12}(z)=g_{21}(z)$ and
  $Du $ is differentiable at $z$. Note that the set of all such $z$ is dense in $\Omega$.
Applying \cite[Theorem 6.1]{eg15} we have
\begin{align}\label{ex1}
\left(\mint{-}_{B(z,r)}|u_{x_i}(x)- u_{x_i}(z)-\sum_{j=1}^2g_{ij}(z)\cdot (x_j-z_j)|^2\,dx\right)^{\frac 12}=o(r).
\end{align}
Since $u_{x_i}$ is differentiable  at $z$,  we have
\begin{align}\label{ex3}
\left|u_{x_i}(x)- u_{x_i}(z)-\sum_{j=1}^2u_{x_ix_j} (z) (x_j-z_j)\right|=o(r).
\end{align}
Thus
\begin{align*}
\left(\mint{-}_{B(z,r)}\sum_{i=1}^2|\sum_{j=1}^2[g_{ij}(z)- u_{x_ix_j}(z)] (x_j-z_j)|^2\,dx\right)^{\frac 12}=o(r).
\end{align*}
This implies that
$g_{ij}(z)=  u_{x_ix_j} (z)$ for all possible $ i,j$.
\end{proof}

It follows from Lemmas \ref{twice-le} and \ref{ty-f1} that
$u$ is twice differentiable almost everywhere in $\Omega$.
\begin{lem}\label{ty-f2}
It holds that
$u$ is twice differentiable almost everywhere in $\Omega$, that is, for almost all $z\in\Omega$,
$$\lim_{r\to0}\sup_{x\in B(z,r)}\frac{\left|u(x)-u(z)-Du(z)\cdot (x-z)-\frac12 (x-z)^T
\cdot D^2u(x)(x-z) \right|}{r^2}=0.$$
\end{lem}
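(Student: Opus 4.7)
The plan is a short Taylor-expansion argument that combines the preceding two lemmas with the fundamental theorem of calculus for $C^1$ functions. First I would define $E\subset\Omega$ to be the set of $z\in\Omega$ such that: (i) both $u_{x_1}$ and $u_{x_2}$ are differentiable at $z$ (a full-measure set by Lemma \ref{twice-le}), and (ii) at $z$ the classical gradient of $u_{x_i}$ coincides with the $i$-th row of the absolutely continuous density $D^2u(z)$ of $\mathcal D^2u$ (another full-measure set by Lemma \ref{ty-f1}). At every $z\in E$, the vector field $Du$ is Fréchet differentiable with derivative $D^2u(z)$, and $E$ has full measure in $\Omega$.

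Next, fix $z\in E$ and $\epsilon>0$. By the definition of Fréchet differentiability of $Du$ at $z$, there exists $\delta=\delta(z,\epsilon)>0$ such that $B(z,\delta)\Subset\Omega$ and
$$|Du(y)-Du(z)-D^2u(z)(y-z)|\le \epsilon|y-z|\qquad \forall y\in B(z,\delta).$$
Since $u\in C^1(\Omega)$ by Theorem \ref{c1-re} and the segment $[z,x]\subset B(z,\delta)\subset\Omega$ for every $x\in B(z,\delta)$, the fundamental theorem of calculus gives
$$u(x)-u(z)=\int_0^1 Du(z+t(x-z))\cdot(x-z)\,dt.$$
Substituting the linearization and integrating in $t$ yields
$$u(x)-u(z)=Du(z)\cdot(x-z)+\tfrac12(x-z)^T D^2u(z)(x-z)+R(x,z),$$
where
$$|R(x,z)|\le \int_0^1 \epsilon\, t|x-z|^2\,dt=\tfrac{\epsilon}{2}|x-z|^2\le \tfrac{\epsilon}{2}r^2$$
uniformly for $x\in B(z,r)$ whenever $r<\delta$. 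Since $\epsilon$ is arbitrary, this produces the required $o(r^2)$ Taylor expansion at every $z\in E$.

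The key conceptual point is that the differentiability of each $u_{x_i}$ at $z$ (which is a pointwise statement in $x$) automatically upgrades to a uniform bound on the whole ball $B(z,\delta)$; this is built into the very definition of Fréchet differentiability, so no additional work is required. I do not expect a real obstacle: all the hard analytic content has been absorbed into Lemmas \ref{twice-le} and \ref{ty-f1}, which simultaneously supply the pointwise second-order differentiability of $Du$ and its identification with the absolutely continuous part of $\mathcal D^2u$. The only minor care is to shrink $\delta$ so that the entire segment $[z,x]$ lies in $\Omega$, which is immediate since $\Omega$ is open and $r$ can be taken smaller than $\dist(z,\partial\Omega)$.
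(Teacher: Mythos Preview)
Your proposal is correct and follows essentially the same approach as the paper: both arguments select a full-measure set where $Du$ is differentiable (via Lemma \ref{twice-le}), apply the fundamental theorem of calculus $u(x)-u(z)=\int_0^1 Du(z+t(x-z))\cdot(x-z)\,dt$ using $u\in C^1(\Omega)$, and then insert the first-order expansion of $Du$ to obtain the $o(r^2)$ remainder. Your explicit invocation of Lemma \ref{ty-f1} to identify the pointwise derivative of $Du$ with the density $D^2u$ is a slight elaboration on the paper, but the underlying argument is the same.
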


\begin{proof}
Let   $z\in\Omega$  be any point where
 $Du$ is differentiable at $z$. Without loss of generality, we assume that  $z=0$.
   Writing
 $$u(x)-u(0)=\int^1_0x\cdot Du(tx)\,dt,$$
 we have
\begin{align}\label{taxp0}
u (x)-u(0)+Du(0)\cdot x=\int^1_0 x \cdot [ Du (t x)-Du (0)]\,dt.
\end{align}
Since $D u $ is differentiable at $0$,  for each $\ez\in (0,1)$ there is $r_\ez>0$ such that  for $ r<r_\ez$ one has
\begin{align}\label{taxp}
|D  u(y)-D u (0)-  D^2u(0)y|\le \ez r \quad \forall y\in B(0,r).
\end{align}
If $x\in B(0,r)$ and $0<t<1$, applying \eqref{taxp} to $y=tx$ we have
 $$|x\cdot Du(tx)-x\cdot Du (0)- x\cdot D^2u(0)tx]|\le \ez r^2.$$
 Thus
  $$\left|\int_0^1[x\cdot Du(tx)- x\cdot Du (0) - x\cdot D^2u(0)tx]\,dt\right|\le \ez r^2$$
  and hence by \eqref{taxp0} we get
  $$\left|u (x)-u(0)+Du(0)\cdot x-\frac12 x^T\cdot D^2u(0)x\right|\le \ez r^2$$
  as desired.
     \end{proof}

\begin{proof}[Proof of Theorem \ref{twice-re}]
Thanks to Lemma \ref{ty-f1} and Lemma \ref{ty-f2}, we only need to
prove that $-D^2uDu\cdot Du=0$, \eqref{Taylor-exp} and \eqref{Taylor-exp2}  hold almost everywhere in $\Omega$.

Since $|Du|^2, u^2_{x_1},u^2_{x_2}$ are differentiable almost everywhere,
 we know that
\begin{align*}
D|Du|^2=D[u_{x_1}^2+u_{x_2}^2]=2(u_{x_1x_1}u_{x_1}+u_{x_2x_1}u_{x_2},u_{x_1x_2}u_{x_1}
+u_{x_2x_2}u_{x_2})=2D^2uDu
\end{align*}
almost everywhere.
Recall that $|Du|^2\in W^{1,2}_\loc$ and
 $D|Du|^2\cdot Du=0 $ almost everywhere, we have
  $-D^2uDu\cdot Du=0$ almost everywhere.

Next we show
\eqref{Taylor-exp} and \eqref{Taylor-exp2} at any point $x$,  where
$u$ is twice differentiable at $x$ and $-D^2u Du \cdot Du=0$ at $x$.
Without loss of generality we write $x=0$.
By Taylor's expansion, we have
$$u(z)=u(0)+Du(0)\cdot z+\frac 12   D^2u(0)z\cdot z+o(|z|^2)\quad\mbox{as $|z|\to 0$.}$$
Since $Du\neq 0$ on $\Omega$, choosing $z=\pm h Du(0) %\frac{Du(0)}{|Du(0)|}
$, we get
$$u(\pm h Du(0))=u(0)\pm h|Du(0)|+\frac 12 h^2 D^2u(0)Du(0)\cdot Du(0)
%\frac{Du(0)}{|Du(0)|}\cdot \frac{Du(0)}{|Du(0)|}
+o(h^2)\quad\mbox{as $|h|\to 0$ },$$
and hence
$$u(0)=\frac12\left[u\left( h Du(0)
\right)+u\left(- h Du(0)
 \right)\right]+o(h^2)\quad\mbox{as $|h|\to 0$ },$$
which gives \eqref{Taylor-exp} at $x=0$ as desired.

 To get  \eqref{Taylor-exp2} at $x=0$,
choose $|x^\pm_\ez|=\ez$ so that
$$\max_{\overline{B(0,\ez)}}u=u(x^+_\ez)\ \mbox{and}\  \min_{\overline{B(0,\ez)}}u=u(x^-_\ez).$$
Recalling that $u\in C^1(\Omega)$ and  $Du\neq 0$ on $\Omega$,
and noting  that  at $ x^\pm_\ez$ the tangential derivatives of $u$ along $\partial B_\ez$  are zero,
we have
\begin{align}\label{Lagrangian}
\frac{Du(x^\pm_\ez)}{|Du(x^\pm_\ez)|}=\pm \frac{x ^\pm_\ez}{|x^\pm_\ez|}.
\end{align}
Observe that $\frac{Du}{|Du|}$ is differentiable at $0$ due to Lemma \ref{twice-le}, we have
\begin{align}\label{diff-dudux}
 \frac{Du(x^\pm_\ez)}{|Du(x^\pm_\ez)|}
  &= \frac{Du(0)}{|Du(0)|}+O(\ez).
\end{align}
On the other hand, by Taylor's expansion,  one has
\begin{align}\label{mean-x1}
\max_{\overline{B(0,\ez)}}u+\min_{\overline{B(0,\ez)}}u
&\le u(x^+_\ez)+u(-x^+_\ez)=2u(0)+D^2u(0)x^+_\ez\cdot x^+_\ez+o(\ez^2)
\end{align}
and
\begin{align}\label{mean-x2}
\max_{\overline{B(0,\ez)}}u+\min_{\overline{B(0,\ez)}}u
&\ge u(x^-_\ez)+u(-x^-_\ez)=2u(0)+D^2u(0)x^-_\ez\cdot x^-_\ez+o(\ez^2).
\end{align}
Using \eqref{Lagrangian}, \eqref{diff-dudux} and $D^2u Du\cdot Du=0$ at point $0$ we have
\begin{align*}
D^2u(0)x^\pm_\ez\cdot x^\pm_\ez&=
 \ez^2  D^2u(0)\frac{Du(x^\pm_\ez)}{|Du(x^\pm_\ez)|}\cdot \frac{Du(x^\pm_\ez)}{|Du(x^\pm_\ez)|}\\
 &= \ez^2  D^2u(0)[\frac{Du(0)}{|Du(0)|}+O(\ez)]\cdot  [\frac{Du(0)}{|Du(0)|}+O(\ez)]\\
 &= \ez^2  D^2u(0) \frac{Du(0)}{|Du(0)|}\cdot  \frac{Du(0)}{|Du(0)|}+O(\ez^3) \\
 &=o(\ez^2).
\end{align*}  From \eqref{mean-x1} and \eqref{mean-x2}, it follows that
$$
2u(0)+o(\ez^2)\le \max_{\overline{B(0,\ez)}}u+\min_{\overline{B(0,\ez)}}u
 \le  2u(0)+ o(\ez^2),
$$
which gives \eqref{Taylor-exp} at $x=0$ as desired.
\end{proof}

\renewcommand{\thesection}{Appendix A}
 \renewcommand{\thesubsection}{ A }
\newtheorem{lemapp}{Lemma \hspace{-0.15cm}}%[section]
\newtheorem{corapp}[lemapp] {Corollary \hspace{-0.15cm}}
\newtheorem{remapp}[lemapp]  {Remark  \hspace{-0.15cm}}
\newtheorem{defnapp}[lemapp]  {Definition  \hspace{-0.15cm}}
 %\numberwithin{equation}{section}
\renewcommand{\theequation}{A.\arabic{equation}}

\renewcommand{\thelemapp}{A.\arabic{lemapp}}

\section{Proof of Corollary \ref{streamline}}

Theorem \ref{c1-re}  allows us to borrow some idea from  Lindgren-Lindqvist \cite{ll21} to prove
 Corollary \ref{streamline}. We give the details for reader's convenience.

\begin{proof}[Proof of Corollary \ref{streamline}]
Fix any $x \in \Omega=\Omega_0\backslash \overline \Omega_1$.  We split the proof into 2 steps.

{\it Step 1.}
Given any $p>2$, since $u_p\in C^\fz(\Omega)$, by
 \cite[Corollary 2.3]{ghkl18}
there exists a unique solution $\gz^p_x \in C^1([0,T_x^p),\Omega)$ for some $T^p_x\in(0,\fz]$ to the problem
\begin{align*}
\frac{d \gz^p_x(t)}{d t}&=Du_p(\gz^p_x(t)) \quad \forall t\in [0,T_x^p);\ \gz^p_x(0)=x,
\end{align*}
  where     $[0,T_x^p)$ is the   maximal time interval.
 Set
$$\Phi_x^p(t):=u_p(\gz^p_x(t))\quad \forall t\in [0, T_x^p).$$
Observe that
$$
\frac{d \Phi_x^p(t)}{dt}=|Du_p(\gz_x^p(t))|^2\ge0\quad \forall t\in [0, T_x^p).$$
Thus, $\Phi_x^p$ is nondecreasing  in $[0,T_x^p)$.
Moreover, we compute that
\begin{align*}  \frac{d^2 \Phi_x^p(t)}{dt^2}
&=\frac{d}{dt}|Du_p(\gz_x^p(t))|^2
 =2D^2 u_p(\gz_x^p(t))Du_p(\gz_x^p(t))\cdot \frac{d \gz^p_x(t)}{d t}
 =2 \Delta_\fz u_p(\gz_x^p(t)).\end{align*}
Since
$$\mbox{$\Delta_pu_p=|Du_p|^{p-2}(\Delta u_p+(p-2)\Delta^N_\fz u_p)=0$, $Du_p\ne0$
 and $-\Delta u_p\ge0$ in $\Omega$},$$
 it follows that
 \begin{align*}  \frac{d^2 \Phi_x^p(t)}{dt^2}
&=-\frac{2}{p-2}|Du_p(\gz^p_x(t))|^2
\bdz u_p(\gz^p_x(t))\ge 0 ,
\end{align*}
which means that $\Phi_x^p$ is convex on $[0,T_x^p)$ and   $|Du_p\circ \gz^p_x|$ is nondecreasing in $[0, T_x^p)$.

Now using the fact that $|Du_p\circ \gz^p_x|$ is nondecreasing in $[0, T_x^p)$,
via \eqref{low-grup} in Lemma \ref{low-up} we conclude that
$$
|Du_p(\gz_x^p(t))|\ge |Du_p(\gz_x^p(0))|=|Du_p(x)|\ge \frac{u_p(x)}{{\rm diam}(\Omega_0)} \quad \forall t\in [0, T_x^p).$$
 Since $u_p\to u$ in $C^0(\overline \Omega)$ and $u(x)>0$,  there exists a fixed constant $p_{x}>2$ such that
$$
\frac{d \Phi_x^p(t)}{dt}\ge\left [\frac 12\frac{u(x)}{{\rm diam}(\Omega_0)}  \right]^2>0
 \quad \forall t\in [0, T_x^p)\quad \forall p>p_{x}.$$
This leads to
\begin{align}\label{max-time0}
1\ge  \Phi_x^p(t)-\Phi_x^p(0)
=\int^t_0 \frac{d \Phi_x^p(s)}{ds}  \,ds
\ge t\left[\frac12\frac{u(x)}{{\rm diam}(\Omega_0)} \right]^2 \quad \forall 0<t<T_x^p,\quad \forall
p>p_{x},
\end{align}
and hence
\begin{align}\label{max-time}
T_x^p\le \left[\frac12\frac{{\rm diam}(\Omega_0)}{u(x)}\right]^2<\fz \quad \forall p>p_{x}.
\end{align}

 Next we  extend $\gz_x^p$ to $[0,T_x^p]$ by setting
 $\gz_x^p(T_x^p)=\lim_{t\to {T_x^p}^{-}} \gz_x^p(t).$ This comes from the fact that
 \begin{align}\label{holder}| \gz_x^p(t)-\gz_x^p(s)|& =\int_s^t|\frac{d \gz^p_x(\dz)}{d t}|\,d\dz\nonumber\\
 &=
\int_s^t|Du_p(\gz^p_x(t)) |\,d\dz\nonumber\\
&\le (t-s)^{1/2} \left(\int_s^t|Du_p(\gz^p_x(\dz)) |^2\,d\dz\right)^{1/2}\nonumber\\
&\le (t-s)^{1/2}\left(\int_s^t \frac{d \Phi_x^p(\dz)}{dt}\,d\dz\right)^{1/2}\nonumber\\
&\le (t-s)^{1/2}(\Phi_x^p(t)-\Phi_x^p(s))^{1/2}\nonumber\\
&\le (t-s)^{1/2} \quad \forall 0\le s<t<T_x^p.
 \end{align}
Note that $\gz_x^p(T_x^p)\in\partial\overline \Omega_1$ and hence $u_p(\gz_x^p(T_x^p))=1$.
 Indeed, thanks to $u_p(\gz_x^p(T_x^p))\ge u_p(x)$ we know that $\gz_x^p(T_x^p)\notin \partial \Omega_0$; since  $[0,T_x^p)$ is the maximal interval,
$ \gz_x^p(T_x^p)\notin\Omega$.

{\it Step 2.}
By \eqref{max-time},  $ T_x^p $ is bounded uniformly in $p>p_{x}$,  and hence,
 up to some subsequence we may assume that $T_x^p\to T_x$ as $p\to\fz$.
By \eqref{holder}, $ \gz_x^p\in C^{1/2}([0,T_x^p])$   uniformly in
$p>p_x$.   Thus $\{\gz_x^p\}_{p>p_x}$
is  uniformly bounded and equal continuous in $[0,T_x]$,
  where if $T_x^p<T_x$ we let $\gz_x^p(t)=\gz_x^p(T_x^p)$
for $t\in[T_x^p,T_x]$.
 Therefore, we can find  a curve $\gz_x\in C^{0,1/2}([0,T_x])$  with
 $\gz_x(0)=x$ and $\gz_x(T_x)\in\partial\Omega_1$, so that
 $\gz_x^p\to \gz_x$  in $C^{1/2}([0,T_x^p])$   as $p\to\fz$ (up to some subsequence).

Write $\Phi_x(t)=u(\gz_x(t))$ for all $ t\in [0,T_x]$.
  Then $\Phi_x^p\to \Phi_x$ in $C^0([0,T_x])$.
Given any $s,t\in [0,T_x)$,   due to $T_x^p\to T_x$,  we know that
$s,t\in[0,T_x^p]$ for all sufficiently large $p$ (up to some subsequence).
Thus
 \begin{align*}
 \Phi_x(t)-\Phi_x(s)=\lim_{p\to\fz}[\Phi_x^p(t)-\Phi_x^p(s)]=\lim_{p\to\fz}\int_s^t\frac{d\Phi_x^p(\dz)}{d\dz}\,d\dz\ge (t-s)\left[\frac{u(x)}{2\diam(\Omega_0)}\right]^2,
\end{align*}
 which implies that
 $\Phi_x$ is strictly increasing.
 Moreover,   $\Phi_x$ is convex on $[0,T_x]$ since
\begin{align*}\Phi_x(\lz s+(1-\lz )t)&= \lim_{p\to\fz}
  \Phi_x^p(\lz s+(1-\lz )t)\\
  &\le  \lim_{p\to\fz}
 [ \lz\Phi_x^p( s)+(1-\lz )\Phi_x^p (t)]\\
 &= [ \lz\Phi_x ( s)+(1-\lz )\Phi_x  (t)]
 \quad\forall 0<s<t<T_x,\lz\in(0,1),\end{align*}

For  any $0 <t <T_x$, thanks to the strictly  increasing property of $\Phi_x$, we know that  $\gz_x([0,t])\subset \Omega$ and hence
 $\gz_x^p([0,t])$ is contained in
 a neighborhood $U\Subset\Omega$ of  for all sufficiently large $p$  (up to some subsequence).
 Thus by Theorem \ref{c1-re} we have $Du_p\circ\gz_x^p \to Du\circ \gz_x $ in $C^0([0,t])$ as $p\to\fz$  (up to some subsequence).
  Since $|Du_p\circ \gz^p_x |$ is nondecreasing in $[0, T_x^p)$, we know that  $|Du\circ \gz_x|$ is also nondecreasing in $[0,T_x)$.  Moreover,
for any $0\le s<t<T_x$, one has
  \begin{align*}
 \gz_x(t)-\gz_x(s)&=\lim_{p\to\fz}[\gz_x^p(t)-\gz_x^p(s)]\\
 &=\lim_{p\to\fz}\int_s^t\frac{d\gz_x^p(\dz)}{d\dz}\,d\dz\\
 &= \lim_{p\to\fz}\int_s^t Du_p(\gz_x^p(\dz))\,d\dz \\
 &=\int_s^t Du (\gz_ x(\dz))\,d\dz,
\end{align*}
which implies that $\gz_{x}\in C^1([0,T_x))$ with  $\frac{d\gz_x (t)}{\,dt}=Du(\gz_x(t))$ for all
$t\in (0,T_x)$.  Since $Du\in L^\fz(\Omega)$, we also know that
$\gz_x\in C^{0,1}([0,T_x])$.
\end{proof}
\renewcommand{\thesection}{Appendix B}
\newtheorem{lembpp}{Lemma \hspace{-0.15cm}}%[section]
\newtheorem{thmbpp}[lembpp] {Theorem \hspace{-0.15cm}}%[section]
\newtheorem{corbpp}[lembpp] {Corollary \hspace{-0.15cm}}
\newtheorem{rembpp}[lembpp]  {Remark  \hspace{-0.15cm}}
\newtheorem{defnbpp}[lembpp]  {Definition  \hspace{-0.15cm}}
% \numberwithin{equation}{section}
\renewcommand{\theequation}{B.\arabic{equation}}

\renewcommand{\thelembpp}{B.\arabic{lembpp}}

\section{Monotonicity in dimension $n=2$}
In this section we assume $n=2$. For $p\in(2,\fz)$, denote by $u_p$ be the $p$-harmonic potential in any given convex ring $\Omega$.

Note that $|Du_p|\neq 0$ on $\Omega$.
The following  monotonicity  for  $u_p$  is a direct consequence of
 the quasi-regular mapping due to Bojarski-Iwaniec \cite{bi87}.  For the readers of convenience, we provide the proof in this Appendix.
\begin{lembpp}\label{mn-up}
For all $B(z,r)\Subset \Omega$, we have
\begin{align}\label{m1}
\max_{\overline {B(z,r)}}|Du_p|=\max_{\partial B(z,r)}|Du_p|,\quad \min_{\overline {B(z,r)}}|Du_p|=
\min_{\partial B(z,r)}|Du_p|
\end{align}
and
\begin{align}\label{m2}
\max_{\overline {B(z,r)}}\frac{\partial u_p}{\partial x_i}=\max_{\partial B(z,r)}\frac{\partial u_p}{\partial x_i},\quad \min_{\overline {B(z,r)}}\frac{\partial u_p}{\partial x_i}=
\min_{\partial B(z,r)}\frac{\partial u_p}{\partial x_i} \quad\mbox{ for $ i=1,2$}.
\end{align}
\end{lembpp}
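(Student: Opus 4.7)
The strategy is to exploit the classical two-dimensional theory of quasiregular mappings applied to the complex gradient of the $p$-harmonic potential. Since $u_p\in C^\fz(\Omega)$ with $Du_p\neq 0$ in $\Omega$, following Bojarski-Iwaniec \cite{bi87} I would form the complex gradient
$$w(z) := \frac{\pa u_p}{\pa x_1}(z)-i\frac{\pa u_p}{\pa x_2}(z),\qquad z=x_1+ix_2\in\Omega,$$
and recall that $w$ satisfies a nonlinear Beltrami-type system of the form $\pa_{\bar z} w = \mu\,\pa_z w + \nu\,\pa_z \overline w$ with $|\mu|+|\nu|\le k(p)<1$. Consequently $w:\Omega\to\cc$ is a $K$-quasiregular mapping with constant $K=K(p)$ depending only on $p$.

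Fix any ball $B(z_0,r)\Subset \Omega$. By the Stoilow factorization theorem, there exist a quasiconformal homeomorphism $\chi$ from a neighborhood $V$ of $\overline{B(z_0,r)}$ onto $\chi(V)$ and a holomorphic function $h$ on $\chi(V)$ such that $w=h\circ\chi$ on $V$. Set $D:=\chi(B(z_0,r))$, which is a Jordan domain with $\pa D=\chi(\pa B(z_0,r))$ since $\chi$ is a homeomorphism of $\overline{B(z_0,r)}$ onto $\overline D$.

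For the identities in \eqref{m1}, I would use that $|Du_p|=|w|=|h|\circ\chi$. The hypothesis $Du_p\neq 0$ in $\Omega$ forces $h$ to be non-vanishing on $\overline D$, so $\log|h|$ is harmonic there. The maximum and minimum principles for the non-vanishing holomorphic function $h$ give
$$\max_{\overline D}|h|=\max_{\pa D}|h|,\qquad \min_{\overline D}|h|=\min_{\pa D}|h|,$$
and pulling back by the homeomorphism $\chi$ (which carries $\overline{B(z_0,r)}$ onto $\overline D$ and $\pa B(z_0,r)$ onto $\pa D$) yields \eqref{m1}. For \eqref{m2}, I would write
$$\frac{\pa u_p}{\pa x_1}=\mathrm{Re}(w)=\mathrm{Re}(h)\circ\chi,\qquad \frac{\pa u_p}{\pa x_2}=-\mathrm{Im}(w)=-\mathrm{Im}(h)\circ\chi,$$
and observe that $\mathrm{Re}(h)$ and $\mathrm{Im}(h)$ are harmonic on $\chi(V)$ (no non-vanishing hypothesis needed here), so each obeys both the maximum and minimum principles on $\overline D$; transferring through $\chi$ gives \eqref{m2}.

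\textbf{Main obstacle.} The delicate step is to have a clean version of Stoilow's factorization that applies on the closed ball $\overline{B(z_0,r)}$ with the homeomorphism sending boundary to boundary, so that the max/min principles for $h$, $\log|h|$, $\mathrm{Re}(h)$, $\mathrm{Im}(h)$ on $\overline D$ pull back cleanly to $\overline{B(z_0,r)}$. This is standard for quasiconformal homeomorphisms of plane domains (they are proper, and a proper homeomorphism between open sets of $\rr^2$ extends continuously to a boundary homeomorphism on Jordan subdomains), but it requires some care. Verifying that the Beltrami structure of $w$ holds in the desired ellipticity range, and that $\chi$ can be taken quasiconformal on a neighborhood of $\overline{B(z_0,r)}$, essentially follows from the smoothness of $u_p$ and $|Du_p|\neq 0$ on $\Omega$; the remaining computations are routine.
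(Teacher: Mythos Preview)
Your argument via Stoilow factorization is correct and complete; the factorization on a neighborhood of $\overline{B(z_0,r)}$ is standard once $w$ is known to be quasiregular on $\Omega$, and the boundary-to-boundary property of the homeomorphism $\chi$ is immediate since $\chi$ is a homeomorphism of open planar domains restricted to a compact Jordan subdomain. However, the paper takes a more elementary PDE route that avoids Stoilow entirely. For \eqref{m1} the paper observes (using the identity in Remark \ref{rem-n-2} and the equation $-(p-2)\Delta_\fz^N u_p=\Delta u_p$) that $w:=\ln|Du_p|$ satisfies a uniformly elliptic equation in divergence form, ${\rm div}(BDw)=0$ with $B=(p-2)\frac{Du_p\otimes Du_p}{|Du_p|^2}+I_2$, so the scalar maximum principle applies directly. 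For \eqref{m2} the paper differentiates the $p$-Laplace equation to obtain ${\rm div}(A\,D(\partial u_p/\partial x_j))=0$ with $A=|Du_p|^{p-2}B$, and again invokes the maximum principle. Your approach packages these same ellipticity facts into the quasiregularity of the complex gradient and then appeals to the open-mapping/maximum-modulus theory of holomorphic functions; this is more conceptual and makes the monotonicity of $|Du_p|$, $\mathrm{Re}\,w$, $\mathrm{Im}\,w$ appear on equal footing, but it imports heavier machinery (Stoilow, quasiconformal theory) where the paper gets by with a one-line divergence identity and the classical maximum principle for linear elliptic equations.
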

\begin{proof}
%Let $B(z,r)\Subset \Omega$.
Recall that $u_p\in C^\fz(\Omega)$ and $Du_p\neq 0$ in $\Omega$. Using equation
$-(p-2)\bdz^N_{\fz} u_p=\bdz u_p$ and the identity in Remark \ref{rem-n-2} one has
$${\rm div}(BDw)=0\quad {\rm in}\quad \Omega,$$
where
$$w=\ln |Du_p|,\quad B=(p-2)\frac{Du_p\otimes Du_p}{|Du_p|^2}+I_2.$$
Here  $\otimes$ stands for tensor product and $I_2$ is identity matrix.
Since $Du_p\neq 0$ in $\Omega$, then \eqref{m1} follows from the maximum principle.

To prove \eqref{m2}, for each $1\le j\le 2$ we differentiate equation \eqref{acp} to get
$$\left(|Du_p|^{p-2}\frac{\partial u_p}{\partial x_i}\right)_{x_ix_j}=0
\quad {\rm in}\quad \Omega.$$
This further leads to
$${\rm div}\left(A \frac{\partial u_p}{\partial x_j}\right)=0\quad {\rm in}\quad \Omega,$$
where
$$A=|Du_p|^{p-2}\left[(p-2)\frac{Du_p\otimes Du_p}{|Du_p|^2}+I_2\right].$$
By $Du_p\neq 0$ in $\Omega$, thus \eqref{m2} also holds via the maximum principle.
\end{proof}

Denote by $u$ be the $\fz$-harmonic potential in  $\Omega$.
Recall that $Du_p\to Du$ is locally uniform on $\Omega$ in Theorem \ref{c1-re}, we have the following monotonicity property for $u$.
\begin{lembpp}\label{mn-u}
For all $B(z,r)\Subset \Omega$, we have
\begin{align}\label{mu1}
\max_{\overline {B(z,r)}}|Du|=\max_{\partial B(z,r)}|Du|,\quad \min_{\overline {B(z,r)}}|Du|=
\min_{\partial B(z,r)}|Du|
\end{align}
and
\begin{align}\label{mu2}
\max_{\overline {B(z,r)}}u_{x_i}=\max_{\partial B(z,r)}u_{x_i},\quad \min_{\overline {B(z,r)}}u_{x_i}=
\min_{\partial B(z,r)}u_{x_i} \quad\mbox{ for $ i=1,2$}.
\end{align}
\end{lembpp}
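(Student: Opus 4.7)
The plan is to derive Lemma \ref{mn-u} as a direct consequence of Lemma \ref{mn-up} by passing to the limit $p\to\fz$, relying on Theorem \ref{c1-re} which provides the locally uniform convergence $Du_p\to Du$ on compact subsets of $\Omega$. Fix any ball $B(z,r)\Subset\Omega$. Since $\overline{B(z,r)}$ is a compact subset of $\Omega$, Theorem \ref{c1-re} gives $Du_p\to Du$ uniformly on $\overline{B(z,r)}$, hence also $|Du_p|\to|Du|$ and $(u_p)_{x_i}\to u_{x_i}$ uniformly on $\overline{B(z,r)}$ (and in particular on $\partial B(z,r)$) for $i=1,2$.

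The key elementary observation is that if $f_p\to f$ uniformly on a compact set $K$, then $\max_K f_p\to\max_K f$ and $\min_K f_p\to\min_K f$, because
\[
\bigl|\max_K f_p-\max_K f\bigr|\le \|f_p-f\|_{L^\fz(K)}\to 0,
\]
and analogously for the minimum. Applying this twice (to $K=\overline{B(z,r)}$ and to $K=\partial B(z,r)$) with $f_p=|Du_p|$ and using Lemma \ref{mn-up}, we obtain
\[
\max_{\overline{B(z,r)}}|Du|=\lim_{p\to\fz}\max_{\overline{B(z,r)}}|Du_p|=\lim_{p\to\fz}\max_{\partial B(z,r)}|Du_p|=\max_{\partial B(z,r)}|Du|,
\]
and similarly for the minimum. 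This establishes \eqref{mu1}. Repeating the identical argument with $f_p=(u_p)_{x_i}$ for $i=1,2$ and invoking \eqref{m2} in Lemma \ref{mn-up} yields \eqref{mu2}.

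There is essentially no obstacle here: the nontrivial content was already absorbed into Lemma \ref{mn-up} (via the Bojarski--Iwaniec theory of quasi-regular mappings, which exploits the two-dimensional structure to write both $\ln|Du_p|$ and each partial $(u_p)_{x_i}$ as solutions of linear elliptic equations in divergence form with coefficients that remain elliptic thanks to $Du_p\ne 0$) and into Theorem \ref{c1-re} (the locally uniform $C^1$-convergence $u_p\to u$, whose proof was the central analytic work of Section \ref{Section5}). The only ingredient required to bridge them is the trivial continuity of $\max$ and $\min$ under uniform convergence on compacta, so the present lemma follows in a few lines.
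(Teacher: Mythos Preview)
Your proposal is correct and takes essentially the same approach as the paper: both derive the monotonicity for $u$ from Lemma \ref{mn-up} by using the locally uniform convergence $Du_p\to Du$ supplied by Theorem \ref{c1-re}. The only cosmetic difference is that the paper phrases the passage to the limit as a contradiction argument, whereas you invoke directly the continuity of $\max$ and $\min$ under uniform convergence on compacta; the content is identical.
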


\begin{proof}
We only prove  $\max_{\overline {B(z,r)}}|Du|=\max_{\partial B(z,r)}|Du|$;   the others can be proved in a similar way.
 Assume that is not correct. Then one can find a point $x\in B(z,r)$ such that
 $|Du(x)|>\max_{\overline {B(z,r)}}|Du|$.  Since $|Du_p|\to |Du|  $ in  $C^0(\overline {B(z,r)})$,
 for all sufficiently large $p$, we have
  $|Du_p(x)|>\max_{\overline {B(z,r)}}|Du_p|$, which is a contradiction.
\end{proof}

Due to Lebesgue in \cite{l07}, the monotonicity allows us to get the following.
\begin{lembpp}\label{mn-u2}
For all $B(z,4r)\Subset \Omega$, we have
\begin{align}\label{m-e1}
\frac{{\rm osc}_{B(z,r)}|Du|^2}{r}\le C\left(
\mint{-}_{B_{2r}(z)}|D|Du|^2|^2\,dx\right)^{\frac 12}
\end{align}
and
\begin{align}\label{m-e2}
 \frac{{\rm osc}_{B(z,r)}u_{x_i}}{r}&\le
 \frac{\|\mathcal D[u_{x_i}]\|(B(z,2r))}{r^2} \quad\mbox{ for $ i=1,2$}.
\end{align}

\end{lembpp}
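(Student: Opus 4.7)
The plan is to follow Lebesgue's classical two-step argument for functions with the monotonicity property: first reduce the oscillation on the ball $B(z,r)$ to the oscillation on a concentric sphere $\partial B(z,\rho)$ via Lemma \ref{mn-u}, then control the spherical oscillation by the appropriate gradient quantity integrated over the sphere, and finally average the radius $\rho$ over $(r,2r)$.

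To prove \eqref{m-e1}, I will fix $\rho\in(r,2r)$, so that $B(z,\rho)\Subset\Omega$. Since $u\in C^1(\Omega)$ by Theorem \ref{c1-re}, the function $|Du|^2$ is continuous, and Lemma \ref{mn-u} yields $\osc_{B(z,r)}|Du|^2\le\osc_{\partial B(z,\rho)}|Du|^2$. By Theorem \ref{sob-re} with $\alpha=2$ one has $|Du|^2\in W^{1,2}_\loc(\Omega)$, so expressing $\int_{B(z,2r)}|D|Du|^2|^2\,dx$ in polar coordinates around $z$ and invoking Fubini shows that for almost every $\rho\in(r,2r)$ the trace of $|Du|^2$ on $\partial B(z,\rho)$ is absolutely continuous with $\osc_{\partial B(z,\rho)}|Du|^2\le\int_{\partial B(z,\rho)}|D|Du|^2|\,d\sigma$. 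Integrating in $\rho$ over $(r,2r)$ and applying the coarea formula to the distance function $x\mapsto|x-z|$ (which has $|\nabla d|=1$) will give
$$r\,\osc_{B(z,r)}|Du|^2\le \int_{B(z,2r)}|D|Du|^2|\,dx,$$
and one application of Cauchy--Schwarz then produces \eqref{m-e1} with a universal constant $C=|B(0,2)|$.

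For \eqref{m-e2} the outline is identical, with $u_{x_i}$ (continuous by Theorem \ref{c1-re}, in $BV_\loc(\Omega)$ by Theorem \ref{secsob}(i)) playing the role of $|Du|^2$; Lemma \ref{mn-u} again gives $\osc_{B(z,r)}u_{x_i}\le\osc_{\partial B(z,\rho)}u_{x_i}$ for each $\rho\in(r,2r)$. The new ingredient I will need is the spherical slicing estimate
$$\int_r^{2r}\osc_{\partial B(z,\rho)}u_{x_i}\,d\rho\le \|\mathcal D[u_{x_i}]\|(B(z,2r)),$$
which I plan to obtain by mollification. For standard smoothings $f_\varepsilon$ of $u_{x_i}$ the smooth version of this estimate (with $\int|\nabla f_\varepsilon|\,dx$ on the right) is immediate from the pointwise inequality $\osc_{\partial B(z,\rho)}f_\varepsilon\le\int_{\partial B(z,\rho)}|\nabla f_\varepsilon|\,d\sigma$ plus coarea; the uniform convergence $f_\varepsilon\to u_{x_i}$ on $\overline{B(z,2r)}$ will transfer the left-hand side through dominated convergence, while the standard $L^1$-approximation of the total variation by gradients of mollifications (with a harmless shrinkage of the outer radius to avoid charging $\partial B(z,2r)$) handles the right. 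Integrating the monotonicity bound over $\rho\in(r,2r)$ and dividing by $r^2$ will then yield \eqref{m-e2}.

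The main technical obstacle is precisely this spherical slicing inequality for the $BV$ function $u_{x_i}$; the corresponding step for $|Du|^2$ is merely Fubini in polar coordinates. The continuity of $u_{x_i}$ inherited from $u\in C^1(\Omega)$ will be essential, since it ensures that the oscillation passes cleanly through the mollification limit and thereby lets us avoid invoking any dedicated slicing theorem for $BV$ measures along concentric spheres. Everything else---Lemma \ref{mn-u}, Fubini, coarea for the distance function, and Cauchy--Schwarz---is routine.
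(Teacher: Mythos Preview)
Your proposal is correct and follows essentially the same route as the paper: Lebesgue's monotonicity argument reducing $\osc_{B(z,r)}$ to $\osc_{\partial B(z,\rho)}$ via Lemma \ref{mn-u}, bounding the spherical oscillation by the tangential gradient integral, and then averaging $\rho$ over $(r,2r)$. The only cosmetic difference is that the paper mollifies in both parts (even for $|Du|^2$), whereas you invoke Fubini/trace theory directly for the $W^{1,2}$ function $|Du|^2$ and reserve mollification for the $BV$ case $u_{x_i}$; both justifications are standard and lead to the same inequalities.
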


\begin{proof}
Let $B(z,4r)\Subset \Omega$ and $0<\ez <\frac14 r$.
We set $g^{\ez}=|Du|^2\ast \eta^{\ez}$ where
$\eta^{\ez}$ is standard mollifier.
Using polar coordinates in the plane, we have
\begin{align*}
{\rm osc}_{\partial B(z,\rho)}g^{\ez}\le \int^{2\pi}_0 \frac{\partial g^{\ez}}{\partial \tz }(\rho,\tz)\,d\tz.
\end{align*}
Observing
\begin{align*}
|Dg^{\ez}|^2= \rho^{-2}\left(\frac{\partial g^{\ez}}{\partial \tz }\right)^2+
\left(
\frac{\partial g^{\ez}}{\partial \rho }\right)^2\ge \rho^{-2}\left(\frac{\partial g^{\ez}}{\partial \tz }\right)^2,
\end{align*}
we have
\begin{align*}
\int^{2r}_r{\rm osc}_{\partial B(z,\rho)}  g^{\ez}d\rho
\le \int^{2\pi}_0\int^{2r}_r\rho^{-1}\frac{\partial g^{\ez}}{\partial \tz }(\rho,\tz)\rho\,d\rho d\tz
\le\int_{B(z,2r)}|Dg^{\ez}|\,dx.
\end{align*}
Since $|Du|\in C^0_{\loc}(\Omega)\cap W^{1,2}_{\loc}(\Omega)$,
by \cite[Theorem 6, Appendix C.4]{e98} we have
$g^{\ez}\to |Du|^2$ in $C^0(B(z,2r))$ and $Dg^{\ez}\to D|Du|^2$ in
$L^2(B(z,2r))$ as $\ez\to 0$.
Sending
$\ez\to 0$, one gets
\begin{align*}
\int^{2r}_r{\rm osc}_{\partial B(z,\rho)}|Du|^2\,d\rho
\le\int_{B(z,2r)}|D|Du|^2|\,dx.
\end{align*}
Since \eqref{mu1} gives
$$\osc_{B(z,r)} |Du|^2
\le\osc_{\overline {B(z,\rho)}} |Du|^2
=\osc_{\partial B(z,\rho)} |Du|^2,\quad \forall\rho\in[0,2r],$$
we have
$$ \frac{\osc_{B(z,r)} |Du|^2}r\le \frac1{r^2} \int^{2r}_r{\rm osc}_{\partial B(z,\rho)}|Du|^2\,d\rho
\le \frac1r\int_{B(z,2r)}|D|Du|^2|\,dx.$$
Applying the H\"older inequality,   we obtain  \eqref{m-e1} as desired.

 Similarly, for
$1\le i\le 2$, write
  $u^{\ez}_{x_i}=u_{x_i}\ast \eta^{\ez}$ where
$\eta^{\ez}$ is standard mollifier. By an argument similar to  above, one  has
\begin{align*}
\int^{2r}_{r}{\rm osc}_{\partial B(z,\rho)}u^{\ez}_{x_i}\,d\rho
\le \int_{B(z,2r)}|Du^{\ez}_{x_i}|\,dx
\end{align*}
By $u_{x_i}\in BV_{\loc}(\Omega)$ and $u^\ez_{x_i}=u_{x_i}\ast \eta^\ez$, one has
 $$\limsup_{\ez\to 0}\int_{B(z,2r)}|Du^{\ez}_{x_i}|\,dx\le \|\mathcal{D}[u_{x_i}]\|(B(z,2r)).$$
Using $u_{x_i}^\ez\to u_{x_i} $ in $C^0(\Omega)$ again,
$$
\int^{2r}_{r}{\rm osc}_{\partial B(z,\rho)}u_{x_i}\,d\rho
 \le \|\mathcal{D}[u_{x_i}]\|(B(z,2r)).
$$
Since \eqref{mu2} yields
$$\frac{\osc_{B(z,r)} u_{x_i}}r\le  \frac1{r^2}
\int^{2r}_{r}{\rm osc}_{\partial B(z,\rho)}u_{x_i}\,d\rho,$$
we obtain \eqref{m-e2} as desired.
\end{proof}

\section*{Acknowledgments}

The authors would like to thanks Professor Peter Lindqvist for several valuable   comments and suggestions, in particular, pointing out a mistake in earlier version.

F. Peng is supported by the National Natural Science Foundation of China (No.12201612) and the Project funded by China Postdoctoral Science Foundation (No. BX20220328).
Y. Zhang is funded by National Key R\&D Program of China (Grant
No. 2021YFA1003100),  the Chinese Academy of Science, NSFC grant No. 12288201, and CAS Project for Young Scientists in Basic Research, Grant No. YSBR-03.
 Y. Zhou is supported by NSFC (No.12025102) and by the Fundamental Research Funds for the Central Universities.

\noindent  Fa Peng

\noindent
School of Mathematical Sciences, Beihang University, Beijing 100191, P.
R. China

\noindent{\it E-mail }:  \texttt{pengfa@buaa.edu.cn}

\bigskip

\noindent Yi Ru-Ya Zhang

\noindent
Academy of Mathematics and Systems Science, the Chinese Academy of Sciences, Beijing 100190, P. R. China

\noindent{\it E-mail }:  \texttt{yzhang@amss.ac.cn}

\bigskip

\noindent  Yuan Zhou

\noindent
School of Mathematical Science, Beijing Normal University, Haidian District Xinjiekou Waidajie No.19, Beijing 10875, P. R. China

\noindent{\it E-mail }:  \texttt{yuan.zhou@bnu.edu.cn}
\end{document}